\pgfplotsset{compat=newest}
\pgfplotsset{plot coordinates/math parser=false}
\newlength\figureheight
\newlength\figurewidth
\crefname{Algorithm}{Algorithm}{Algorithms}
\numberwithin{Algorithm}{section}
\newcommand{\term}{\emph}
\newcommand{\field}[1]{\mathbb{#1}}
\newcommand{\N}{\mathbb{N}}
\newcommand{\R}{\field{R}}
\newcommand{\extR}{\overline \R}
\newcommand{\norm}[1]{\|#1\|}
\newcommand{\inv}[1]{#1^{-1}}
\newcommand{\grad}{\nabla}
\newcommand{\freevar}{\,\boldsymbol\cdot\,}
\newcommand{\Union}\bigcup
\newcommand{\Isect}\bigcap
\newcommand{\union}\cup
\newcommand{\isect}\cap
\newcommand{\bigunion}\bigcup
\newcommand{\bigisect}\bigcap
\newcommand{\defeq}{:=}
\newcommand{\subdiff}{\partial}
\DeclareMathOperator{\Dom}{dom}
\def \uminus@sym{\setbox0=\hbox{$\cup$}\rlap{\hbox 
        to\wd0{\hss\raise0.5ex\hbox{$\scriptscriptstyle{-}$}\hss}}\box0}
    \def \uminus    {\mathrel{\uminus@sym}}
\newcommand{\iprod}[2]{\langle #1,#2\rangle}
\def \weaktostar@sym{\setbox0=\hbox{$\rightharpoonup$}\rlap{\hbox 
        to\wd0{\hss\raise1ex\hbox{$\scriptscriptstyle{*\,}$}\hss}}\box0}
    \def \weaktostar    {\mathrel{\weaktostar@sym}}
\def\linear{\mathcal{L}}
\newcommand{\setto}{\rightrightarrows}
\def\extR{\overline \R}
\def\opt#1{\widebar #1}
\def\realopt#1{\widehat #1}
\def\this#1{#1^i}
\def\nexxt#1{#1^{i+1}}
\def\optx{{\opt{x}}}
\def\opty{{\opt{y}}}
\def\realoptu{{\realopt{u}}}
\def\realoptx{{\realopt{x}}}
\def\realopty{{\realopt{y}}}
\def\nextu{\nexxt{u}}
\def\nextx{\nexxt{x}}
\def\nextz{\nexxt{z}}
\def\nexty{\nexxt{y}}
\def\thisu{\this{u}}
\def\thisx{\this{x}}
\def\thisz{\this{z}}
\def\thisy{\this{y}}
\def\tauTest{\phi}
\def\sigmaTest{\psi}
\def\gap{\mathcal{G}}
\def\Space{U}
\newcommand{\Test}{Z}
\newcommand{\Precond}{M}
\newcommand{\Step}{W}
\DeclareFontFamily{U}{mathx}{\hyphenchar\font45}
\DeclareFontShape{U}{mathx}{m}{n}{<-> mathx10}{}
\DeclareSymbolFont{mathx}{U}{mathx}{m}{n}
\DeclareMathAccent{\widebar}{0}{mathx}{"73}
\def\prev#1{#1^{i-1}}
\def\prevu{\prev{u}}
\def\prevx{\prev{x}}
\def\prevy{\prev{y}}
\def\thisz{\this{z}}
\def\nextz{\nexxt{z}}
\def\PrecondPlus{\check\Precond}
\DeclareMathOperator{\prox}{prox}
\def\bar{\widebar}
\renewcommand{\todo}[2][]{\tikzexternaldisable\@todo[#1]{#2}\tikzexternalenable}
\def\thetitle{Inertial, corrected, primal-dual proximal splitting}
\title{\thetitle}
\newenvironment{demonstration}[1][Demonstration]{\begin{proof}[#1]}{\end{proof}}
\providecommand{\orcid}[1]{\mbox{\scshape\sffamily orcid:}\,\href{https://orcid.org/#1}{\detokenize{#1}}}
\begin{document}

\date{2018-04-24 (revised 2020-02-12)}
\author{
    Tuomo Valkonen\thanks{Department of Mathematics and Statistics, University of Helsinki, Finland \emph{and} ModeMat, Escuela Politécnica Nacional, Quito, Ecuador;
    \emph{previously} Department of Mathematical Sciences, University of Liverpool, United Kingdom. \email{tuomo.valkonen@iki.fi}, \orcid{0000-0001-6683-3572}}
    }

\maketitle

\begin{abstract}
    We study inertial versions of primal-dual proximal splitting, also known as the Chambolle--Pock method. Our starting point is the preconditioned proximal point formulation of this method. By adding correctors corresponding to the anti-symmetric part of the relevant monotone operator, using a FISTA-style gap unrolling argument, we are able to derive gap estimates instead of merely ergodic gap estimates. Moreover, based on adding a diagonal component to this corrector, we are able to combine strong convexity based acceleration with inertial acceleration.
    We test our proposed method on image processing and inverse problems problems, obtaining convergence improvements for sparse Fourier inversion and Positron Emission Tomography.
\end{abstract}

\section{Introduction}
\label{sec:introduction}

For convex, proper, and lower semicontinuous $G: X \to \extR$ and $F^*: Y \to \extR$, and a bounded linear operator $K \in \linear(X; Y)$ on Hilbert spaces $X$ and $Y$, we will derive inertial primal-dual optimisation methods for the problem
\begin{equation}
    \label{eq:prob-basic}
    \min_{x \in X}~ G(x) + F(Kx).
\end{equation}

If $K$ is the identity, and $F$ is smooth, a classical algorithm for the iterative solution of \eqref{eq:prob-basic} is the forward--backward splitting method $\nextx \defeq \prox_{\tau G}(\thisx - \tau \grad F(\thisx))$, where $\tau L < 1$ for $L$ the Lipschitz factor of $\grad F$.
That is, we take proximal steps with respect to $G$, and gradient steps with respect to $F$.
The proximal step needs to be efficiently realisable, i.e., $G$ needs to be ``prox-simple''.
If no strong convexity is present, the iterates of the forward--backward splitting generally converge weakly, and the function values at the rate $O(1/N)$. By applying \term{inertia}, the latter can be improved to $O(1/N^2)$. This in essence consists of rebasing the algorithm at an inertial variable $\this{\bar x}$:
\begin{equation}
    \label{eq:fista}
    \nextx \defeq \prox_{\tau G}(\this{\bar x} - \tau \grad F(\this{\bar x})),
    \quad\text{where}\quad
    \this{\bar x} \defeq (1+\alpha_i)\thisx-\alpha_i\prevx
\end{equation}
for suitable inertial parameters $\{\alpha_i\}_{i \in \N}$. In FISTA \cite{beck2009fista}, which itself is an extension of Nesterov's accelerated gradient descent \cite{nesterov1983method}, one would take
\begin{equation}
    \label{eq:fista-sequence}
    \alpha_{i+1} \defeq \lambda_{i+1}(\inv\lambda_i-1)\quad\text{for}\quad \inv\lambda_{i+1} \defeq \sqrt{\lambda_i^{-2}+1/4}+1/2.
\end{equation}
For this scheme, no convergence rates of the iterates themselves are known, although weak convergence can be obtained with small modifications \cite{chambolle2014convergence}.
Several studies have sought to further optimise the inertial parameters; we refer merely to a few of the most recent works \cite{kim2018fista,attouch2018convergence} and references therein.

If $F$ is non-smooth, but $G$ is smooth, we can apply forward--backward splitting or FISTA with the roles of the two functions exchanged. However, if $K$ is not the identity, $F \circ K$ is rarely prox-simple, so these methods are not practically applicable. Nevertheless, denoting by $F^*$ the Fenchel conjugate of $F$, we can reformulate \eqref{eq:prob-basic} as
\begin{equation}
    \label{eq:saddle}
    \min_{x \in X} \max_{y \in Y}~ G(x) + \iprod{Kx}{y} - F^*(y).
\end{equation}
A popular iterative method for this class of problems is the \term{primal-dual proximal splitting} (PDPS), commonly known as the Chambolle--Pock method \cite{chambolle2010first}.
It takes alternate proximal steps with respect to the primal and dual variables $x$ and $y$:
\begin{equation}
    \label{eq:pdps-basic}
    \left\{
    \begin{aligned}
        \nextx & \defeq \prox_{\tau_i G}(\thisx - \tau_i K^* y^{i}),\\
        \nextx_\omega & \defeq \omega_i (\nextx-\thisx)+\nextx, \\
        \nexty & \defeq \prox_{\sigma_{i+1} F^*}(\thisy + \sigma_{i+1} K \nextx_\omega).
    \end{aligned}
    \right.
\end{equation}
In the basic version the over-relaxation parameter $\omega_i \equiv 1$, and the primal and dual step lengths $\tau_i \equiv \tau_0$, $\sigma_i \equiv \sigma_0$ with $\tau_0\sigma_0 \norm{K}^2<1$. This yields $O(1/N)$ convergence rate for an ergodic gap functional, and weak convergence of the iterates. If $G$ is strongly convex with factor $\gamma>0$, an accelerated version updates $\tau_{i+1} \defeq \omega_i \tau_i$ and $\sigma_{i+1} \defeq \sigma_i/\omega_i$ for $\omega_i \defeq 1/\sqrt{1+\gamma\tau_i}$. This yields $O(1/N^2)$ rates for the ergodic gap as well as $\norm{x^N-\realoptx}^2$.

Several recent works \cite{vu2013splitting,condat2013primaldual,chambolle2014ergodic,chan2014inertial,lorenz2014accelerated,alvarez2001inertial} have applied inertia and closely-related over-relaxation \cite{he2012convergence,eckstein1992douglas} to the basic method \eqref{eq:pdps-basic}. For inertia, writing $\thisu=(\thisx, \thisy)$ and $\this{\bar u}=(\this{\bar x}, \this{\bar y})$, similarly to \eqref{eq:fista}, one rebases the algorithm at $\this{\bar u} \defeq (1+\alpha_i)\thisu-\alpha_i\prevu$ in place of $\thisu$.
In \cite{chambolle2014ergodic}, $O(1/N)$ convergence of an \emph{ergodic} gap functional is shown for this method. No $O(1/N^2)$ results are known to us, or results for a non-ergodic gap or iterates. \emph{In this work, we want to improve upon these convergence rate results, possibly by modifying the algorithm.}

A crucial ingredient for inertia to work in \eqref{eq:fista} is a gap unrolling argument.
To demonstrate this argument, we take for simplicity $F=0$. Then \eqref{eq:fista} implies $\nexxt{q} \defeq -\inv\tau(\nextx-\this{\bar x}) \in \subdiff G(\nextx)$. Defining the auxiliary sequence $
\nexxt{\zeta} \defeq \inv \lambda_{i}\nextx-(\inv\lambda_{i}-1)\thisx$, for all $\realoptx \in X$ one then has\footnote{For the inequality apply Pythagoras' identity to convert the right-hand inner product into norms squared. Then cancel repeated terms and estimate the remaining negative terms. The details in abstract form can also be found in \cref{thm:general-inertia}.}
\begin{equation}
    \label{eq:motivation-base-estimate}
     C_0 \defeq \frac{1}{2\tau}\norm{\zeta^0-\realoptx}^2
     \ge
     -\inv\tau\sum_{i=0}^{N-1}\iprod{\nexxt{\zeta}-\this{\zeta}}{\nexxt{\zeta}-\realoptx}
     =
     \sum_{i=0}^{N-1} \inv\lambda_i \iprod{\nexxt{q}}{\nexxt{\zeta}-\realoptx}.
\end{equation}
If we do not apply inertia, that is $\lambda_i \equiv 1$, we have $\nexxt{\zeta}=\nextx$, so by convexity and Jensen's inequality
\begin{equation}
    \label{eq:jensen-arg}
    C_0 \ge \sum_{i=0}^{N-1} \bigl( G(\nextx) - G(\realoptx) \bigr)
    \ge N\bigl( G(\tilde x^N) - G(\realoptx) \bigr),
    \quad\text{where}\quad
    \tilde x^N \defeq \frac{1}{N} \sum_{i=0}^{N-1} \nextx.
\end{equation}
Due to the variable $\tilde x^N$, this $O(1/N)$ estimate is ergodic.
If, on the other hand, we update $\lambda_i$ as in \eqref{eq:fista-sequence}, we can unroll the ergodicity: Since
\[
    \lambda_i(\nexxt{\zeta}-\realoptx)=\lambda_i(\nextx-\realoptx)+(1-\lambda_i)(\nextx-\thisx),
\]
we estimate from \eqref{eq:motivation-base-estimate} by rearrangements and the definition of the subdifferential that%
\begin{subequations}%
\label{eq:intro-prox-inertia-argumentation}%
\begin{equation}
    \begin{split}
    C_0
    &
    \ge
    \sum_{i=0}^{N-1} \lambda_i^{-2}\Bigl[
        \lambda_i\iprod{\nexxt{q}}{\nextx - \realoptx}
        +(1-\lambda_i)\iprod{\nexxt{q}}{\nextx - \thisx}
    \Bigr]
    \\
    &
    \ge
    \sum_{i=0}^{N-1} \lambda_i^{-2}\Bigl[
        \lambda_i(G(\nextx) - G(\realoptx))
        +(1-\lambda_i)(G(\nextx)-G(\thisx))
    \Bigr]
    \\
    &
    =
    \sum_{i=0}^{N-1} \Bigl[
        \lambda_i^{-2}(G(\nextx) - G(\realoptx))
        -(\lambda_i^{-2}-\inv \lambda_i)(G(\thisx)-G(\realoptx))
    \Bigr].
    \end{split}
\end{equation}
Telescoping and the recurrence  $\lambda_{i}^{-2}=\lambda_{i+1}^{-2}-\inv\lambda_{i+1}$ established from \eqref{eq:fista-sequence} now yield
\begin{equation}
    C_0 + \lambda_0^{-2}(1-\lambda_0)(G(x^0)-G(\realoptx))
    \ge
    \lambda_{N-1}^{-2} (G(x^N)-G(\realoptx)).
\end{equation}%
\end{subequations}%
Since the recurrence also implies that $\lambda_{N}$ is of the order $O(1/N^2)$ \cite{beck2009fista}, this gives the improved convergence rate. Similar arguments can be applied to the forward step component $F$, as we will demonstrate in \cref{sec:unrolling} in a more general setting.

How could such argumentation be applied to the PDPS \eqref{eq:pdps-basic}?
It was discovered in \cite{he2012convergence} that the method can be written as the ``preconditioned proximal point method''
\begin{equation}
    \label{eq:pp0}
    0 \in H(\nextu) + \inv \Step_{i+1}\Precond_{i+1}(\nextu-\thisu)
\end{equation}
in the space $\Space \defeq X \times Y$ with the general notation $u=(x,y)$ for the monotone operator $H: \Space \setto \Space$, the linear preconditioner $\Precond_{i+1} \in \linear(\Space; \Space)$, and the step length operator $\Step_{i+1} \in \linear(\Space; \Space)$ defined as
\begin{align}%
    \label{eq:pdps-h}
    H(u) & \defeq
        \begin{pmatrix}
            \subdiff G(x) + K^* y \\
            \subdiff F^*(y) -K x
        \end{pmatrix},
    &
    \Precond_{i+1} &
    \defeq
    \begin{pmatrix}
        I & -\tau_i K^* \\
        -\sigma_{i+1}\omega_i K & I
    \end{pmatrix},
    \quad\text{and}
    &
    \Step_{i+1} & \defeq
    \begin{pmatrix}
        \tau_i I & 0 \\
        0 & \sigma_{i+1} I
    \end{pmatrix}.
\end{align}%
The over-relaxation parameter $\omega_i$ and the step length parameters $\tau_i$ and $\sigma_{i+1}$ are as after \eqref{eq:pdps-basic}.
Clearly $H(u)=\subdiff \hat G(u)+\Gamma u$ for the convex function $\hat G(u) \defeq G(x)+F^*(y)$ and an anti-symmetric operator $\Gamma$. We can thus apply \eqref{eq:intro-prox-inertia-argumentation} to $\hat G$. However, the anti-symmetric operator $\Gamma$ does not arise as a subdifferential, so similar arguments do not apply to it. Indeed, given some inertial parameters $\lambda_i>0$ and the primal-dual auxiliary sequence $\nexxt{z} \defeq \inv \lambda_{i}\nextu-(\inv\lambda_{i}-1)\thisu$, corresponding to $\nexxt{\zeta}$ above, it does not seem possible to develop a useful estimate out of $\sum_{i=0}^{N-1} \inv\lambda_i \iprod{H(\nextu)}{\nextz-\realoptu}$ alone, unless $\lambda_i \equiv 1$.
\emph{In \cref{sec:inertia}, we are therefore going to \term{correct} the inertial scheme against the anti-symmetry of $\Gamma$.} We do this in the context of general proximal point methods for the solution of the variational inclusion $0 \in H(\realoptu)$.
We demonstrate how to \emph{test} for convergence rates of such methods based on the ideas introduced in \cite{tuomov-proxtest} for non-inertial methods.

We adapt and improve the inertial unrolling argument \eqref{eq:intro-prox-inertia-argumentation} to the setting of this testing theory and corrected inertial methods in \cref{sec:unrolling}. With an eye towards convergence rate proofs, we also develop parameter growth estimates, and briefly demonstrate the theory by application to FISTA.
Based on the general results of \cref{sec:inertia,sec:unrolling}, we then develop our proposed \term{inertial, corrected, primal-dual proximal splitting} (IC-PDPS) in  \cref{sec:inertial-pdps}. Using the corrector, we will also be able to incorporate strong convexity based acceleration into the inertial method.
We finish with conclusions and numerical experience in \cref{sec:numerical}.
Readers wishing to simply implement our proposed method, can find it in an explicit and mostly self-contained form near the end in \cref{alg:pdps-inertial-explicit}. Only the step length rules need to be taken from a choice of theorems given in the algorithm description.


\paragraph{Notation}

We write $\extR \defeq [-\infty, \infty]$ for the extended reals and $\linear(X; Y)$ for the space of bounded linear operators between Hilbert spaces $X$ and $Y$. 
The identity operator in any space is $I$.
For $T,S \in \linear(X; X)$, we write $T \ge S$ when $T-S$ is positive semidefinite.
Also for possibly non-self-adjoint $T$, we introduce the inner product $\iprod{x}{z}_T \defeq \iprod{Tx}{z}$, and, for positive semi-definite $T$, the semi-norm $\norm{x}_T \defeq \sqrt{\iprod{x}{x}_T}$.
For a set $A \subset \R$ and a scalar $c \in \R$, we write $A \ge c$ if every element $t \in A$ satisfies $t \ge c$.
We write $H: X \setto Y$ for $H$ being a set-valued map from $X$ to $Y$.

\section{General inertial methods, correctors}
\label{sec:inertia}

We will now study the application of inertia to general preconditioned proximal point schemes, of which the PDPS is an instance.
As we discussed in the Introduction, \cite{he2012convergence} showed that the PDPS \eqref{eq:pdps-basic} can be written as solving $0 \in H(\nextu) + \inv \Step_{i+1}\Precond_{i+1}(\nextu-\thisu)$ for $\nextu$ with the choices \eqref{eq:pdps-h}.
Further developments in \cite{tuomov-proxtest,tuomov-cpaccel} rewrote the method with $\tilde H_{i+1} \defeq \Step_{i+1} H$ as an instance of the more general scheme
\begin{equation}
    \label{eq:ppext}
    \tag{PP}
    0 \in \tilde H_{i+1}(\nextu) + \Precond_{i+1}(\nextu-\thisu)
\end{equation}
that can also model forward steps.
This formulation, with the step length operator $\Step_{i+1}$ uninverted and moved at the front of $H$ turned out to be beneficial for the development of compact convergence rate proofs of the PDPS \cite{tuomov-proxtest}, as well as stochastic extensions that would have $\Step_{i+1}$ non-invertible \cite{tuomov-blockcp}.

In this section, we will study the application of inertia to \eqref{eq:ppext}.
We start by formulating a simple extension of \eqref{eq:fista} to \eqref{eq:ppext}.
As in \eqref{eq:fista-sequence}, writing the inertial parameter as $\alpha_{i+1}=\lambda_{i+1}(\inv\lambda_i-1)$, we now take an invertible linear operator $\Lambda_{i+1}$ as our fundamental inertial parameter. Given an initial iterate $u^0=\bar u^0 \in \Space$, we then rebase $\thisu$ in \eqref{eq:ppext} to $\this{\bar u}$ to obtain the method
\begin{equation}
    \label{eq:general-inertial}
    \left\{
    \begin{aligned}
    0 &\in \tilde H_{i+1}(\nextu) + \Precond_{i+1}(\nextu-\this{\bar u}),
    \\
    \nexxt{\bar u} & \defeq \nextu+\Lambda_{i+2}(\inv\Lambda_{i+1}-I)(\nextu-\thisu).
    \end{aligned}
    \right.
\end{equation}
We assume that $\tilde H_{i+1}: \Space \setto \Space$ and $\Precond_{i+1},\Lambda_{i+1} \in \linear(\Space; \Space)$ on a Hilbert space $\Space$.

\begin{remark}
    The operator $\Lambda_{i+1}$ has the index $i+1$ off-by-one compared to $\lambda_i$ in \eqref{eq:fista} and \eqref{eq:fista-sequence}. This is for consistency with the historical development of the PDPS \eqref{eq:pdps-basic} into the form \eqref{eq:pp0} or \eqref{eq:ppext}: compare \eqref{eq:pdps-h}, where primal step lengths within the step length operator $\Step_{i+1}$ have index $i$, and dual step lengths index $i+1$.
    This will generally be the case: operator indices agree with dual parameter indices, while primal parameter indices will be one less.
    We have not reindexed the parameters to maintain the property $\sigma_i\tau_i=\sigma_0\tau_0$ of the PDPS.
\end{remark}

We want to correct for any anti-symmetric or otherwise challenging components $\Gamma_{i+1} \in \linear(\Space; \Space)$ of $\tilde H_{i+1}$. We therefore introduce the \term{corrector}
\begin{equation}
    \label{eq:inertia-precondplus}
    \PrecondPlus_{i+1} \defeq \Gamma_{i+1}(\inv\Lambda_{i+1}-I),
\end{equation}
and modify \eqref{eq:general-inertial} into the \term{general corrected inertial method}
\begin{equation}
    \label{eq:pp-inertia}
    \tag{PP-I}
    \left\{
    \begin{aligned}
    0 &\in \tilde H_{i+1}(\nextu) + \Precond_{i+1}(\nextu-\this{\bar u})
    +\PrecondPlus_{i+1}(\nextu-\thisu),
    \\
    \nexxt{\bar u} & \defeq \nextu+\Lambda_{i+2}(\inv\Lambda_{i+1}-I)(\nextu-\thisu).
    \end{aligned}
    \right.
\end{equation}

In this section, our task is to develop general convergence estimates for \eqref{eq:pp-inertia}, which we will then use to prove convergence rates of more specific instances of the general method, in particular the inertial, corrected, PDPS in \cref{sec:inertial-pdps}.
To interpret the main assumption of our abstract convergence estimate, and for later use, we recall the following three-point inequality:

\begin{lemma}
    \label{lemma:smoothness-three-point}
    Let $F: X \to \extR$ be proper, convex, lower semicontinuous with $\grad F$ $L$-Lipschitz. Then
    \begin{equation*}
        \iprod{\grad F(z)}{x-\realoptx}
        \ge
        F(x) - F(\realoptx) -  \frac{L}{2}\norm{x-z}^2
        \quad (\realoptx, z, x \in X).
    \end{equation*}
\end{lemma}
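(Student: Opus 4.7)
The plan is to split the inner product via the intermediate point $z$, writing
\[
\iprod{\grad F(z)}{x-\realoptx} = \iprod{\grad F(z)}{x-z} + \iprod{\grad F(z)}{z-\realoptx},
\]
and then bound each of the two terms by a standard inequality from smooth convex analysis.

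For the first term, I would invoke the descent lemma for functions with $L$-Lipschitz gradient, namely $F(x) \le F(z) + \iprod{\grad F(z)}{x-z} + \frac{L}{2}\norm{x-z}^2$, which rearranges to
\[
\iprod{\grad F(z)}{x-z} \ge F(x)-F(z) - \frac{L}{2}\norm{x-z}^2.
\]
For the second term, I would use the standard subgradient inequality for convex differentiable $F$, i.e. $F(\realoptx) \ge F(z) + \iprod{\grad F(z)}{\realoptx - z}$, which gives
\[
\iprod{\grad F(z)}{z-\realoptx} \ge F(z) - F(\realoptx).
\]
Adding the two inequalities, the $\pm F(z)$ terms cancel and one recovers the claimed bound.

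There is essentially no obstacle here: the result is a textbook three-point consequence of the descent lemma combined with convexity, and both ingredients are standard for proper, convex, lower semicontinuous $F$ with $L$-Lipschitz gradient on a Hilbert space. The only thing to be slightly careful about is to justify the descent lemma in the Hilbert-space setting, which follows from the integral representation $F(x) - F(z) = \int_0^1 \iprod{\grad F(z+t(x-z))}{x-z}\,dt$ together with the Lipschitz estimate on $\grad F$; this is routine enough that I would just cite it rather than reprove it.
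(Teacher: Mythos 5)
Your proof is correct and follows essentially the same route as the paper: both combine the descent inequality $F(x) \le F(z) + \iprod{\grad F(z)}{x-z} + \frac{L}{2}\norm{x-z}^2$ with the convexity (gradient) inequality at $z$ and sum, so that the $F(z)$ terms cancel. The only cosmetic difference is that you make the split of the inner product through $z$ explicit before adding.
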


\begin{proof}
    Since $F$ has $L$-Lipschitz gradient, it is smooth in the sense of convex analysis (also known as satisfying the \term{descent inequality}),
    $F(z)-F(x) \ge \iprod{\grad F(z)}{z-x} - \frac{L}{2}\norm{x-z}^2$.
    By convexity $F(\realoptx)-F(z) \ge \iprod{\grad F(z)}{\realoptx-z}$.
    Summing these two estimates, we obtain the claim.
\end{proof}

We will develop our convergence estimates following the testing framework of \cite{tuomov-proxtest}. The idea introduced there was to pick a suitably designed testing operator $\Test_{i+1} \in \linear(\Space; \Space)$, and then apply the testing functional $u \mapsto \iprod{u}{\nextu-\realoptu}_{\Test_{i+1}}$ to both sides of \eqref{eq:ppext}. An almost trivial argument based on a simple assumption on $\tilde H_{i+1}$ and Pythagoras' (three-point) identity would then show that $\Test_{i+1}\Precond_{i+1}$ forms a local metric that measures convergence rates.
However, presently, we cannot in general obtain estimates on the principal sequence $\{\thisu\}_{i \in \N}$. Rather, we will obtain estimates on the auxiliary sequence $\{\thisz\}_{i \in \N}$, defined through
\begin{equation}
    \label{eq:inertia-z}
    z^0 \defeq u^0
    \quad\text{and}\quad
    \nextz \defeq \inv\Lambda_{i+1}\nextu-(\inv\Lambda_{i+1}-I)\thisu
    \quad (i \in \N).
\end{equation}
This adds some additional complexity to the main condition \eqref{eq:inertia-h-cond} of the next theorem. We will motivate the condition after the proof.

\begin{theorem}
    \label{thm:general-inertia}
    On a Hilbert space $\Space$, for $i=0,\ldots,N-1$, let $\tilde H_{i+1}: \Space \setto \Space$ as well as $\Precond_{i+1}, \Test_{i+1}, \Gamma_{i+1}, \Lambda_i, \in \linear(\Space; \Space)$ with $\Lambda_i$ invertible.
    Given an initial iterate $u^0=\bar u^0 \in \Space$, let $\{\nextu\}_{i \in \N}$
    be defined through the solution of \eqref{eq:pp-inertia}, and the auxiliary sequence $\{\thisz\}_{i \in \N}$ by \eqref{eq:inertia-z}.
    Suppose, for $i=1,\ldots,N-1$, that $\Test_{i+1}\Precond_{i+1} \ge 0$ is self-adjoint, and for some  $\realoptu \in \Space$ and a placeholder real value $\mathcal{V}_{i+1}(\realoptu) \in \R$ that
    \begin{multline}
        \label{eq:inertia-h-cond}
        \iprod{\tilde H_{i+1}(\nextu)-\Gamma_{i+1}(\nextu-\realoptu)}{\nextz-\realoptu}_{\Lambda_{i+1}^*\Test_{i+1}}
        \ge
        \mathcal{V}_{i+1}(\realoptu)
        - \frac{1}{2}\norm{\nextz-\thisz}_{\Lambda_{i+1}^*\Test_{i+1}\Precond_{i+1}\Lambda_{i+1}}^2
    \end{multline}
    and
    \begin{equation}
        \label{eq:inertia-metric-cond}
        \Lambda_{i+1}^*\Test_{i+1}(\Precond_{i+1}\Lambda_{i+1}+2\Gamma_{i+1})
        \ge \Lambda_{i+2}^*\Test_{i+2}\Precond_{i+2}\Lambda_{i+2}.
    \end{equation}
    Then
    \begin{equation}
        \label{eq:convergence-result-inertia-h}
        \frac{1}{2}\norm{z^N-\realoptu}^2_{\Lambda_{N+1}^*\Test_{N+1}\Precond_{N+1}\Lambda_{N+1}}
        +
        \sum_{i=0}^{N-1} \mathcal{V}_{i+1}(\realoptu)
        \le
        \frac{1}{2}\norm{z^0-\realoptu}^2_{\Lambda_{1}^*\Test_{1}\Precond_{1}\Lambda_{1}}
        \quad
        (N \ge 1).
    \end{equation}
\end{theorem}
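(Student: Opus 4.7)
The plan is to reformulate \eqref{eq:pp-inertia} as a recursion purely in the auxiliary sequence $\{\thisz\}$, apply the testing-functional argument of \cite{tuomov-proxtest} in the weighted inner product $\iprod{\cdot}{\cdot}_{\Lambda_{i+1}^*\Test_{i+1}}$, use the three-point (Pythagoras) identity, and telescope via \eqref{eq:inertia-metric-cond}.

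First I would derive two algebraic identities that absorb both the inertial rebasing and the corrector into the $\thisz$-dynamics. From the definition of $\this{\bar u}$ and of $\thisz$ in \eqref{eq:inertia-z}, a direct computation gives
\[
    \nextu - \this{\bar u} = \Lambda_{i+1}(\nextz - \thisz)
    \quad\text{and}\quad
    (\inv\Lambda_{i+1}-I)(\nextu - \thisu) = \nextz - \nextu,
\]
valid for all $i \ge 0$ (the case $i=0$ uses $\bar u^0 = u^0 = z^0$). Combined with \eqref{eq:inertia-precondplus}, these convert \eqref{eq:pp-inertia} into the equivalent inclusion
\[
    0 \in \tilde H_{i+1}(\nextu) + \Precond_{i+1}\Lambda_{i+1}(\nextz-\thisz) + \Gamma_{i+1}(\nextz - \nextu).
\]

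Next I would test this inclusion against $\nextz - \realoptu$ in the $\Lambda_{i+1}^*\Test_{i+1}$ inner product. The splitting $\Gamma_{i+1}(\nextz - \nextu) = -\Gamma_{i+1}(\nextu - \realoptu) + \Gamma_{i+1}(\nextz - \realoptu)$ aligns the first piece with the left-hand side of the main hypothesis \eqref{eq:inertia-h-cond}; applying that hypothesis and writing $M_{i+1} \defeq \Lambda_{i+1}^*\Test_{i+1}\Precond_{i+1}\Lambda_{i+1}$ reduces the tested inclusion to
\[
    \iprod{\thisz - \nextz}{\nextz - \realoptu}_{M_{i+1}} - \iprod{\nextz - \realoptu}{\nextz - \realoptu}_{\Lambda_{i+1}^*\Test_{i+1}\Gamma_{i+1}} \ge \mathcal{V}_{i+1}(\realoptu) - \tfrac{1}{2}\norm{\nextz - \thisz}_{M_{i+1}}^2.
\]
Since $\Test_{i+1}\Precond_{i+1}$ is self-adjoint, so is $M_{i+1}$, and the three-point identity expands the remaining inner product so that the $\tfrac{1}{2}\norm{\nextz-\thisz}^2_{M_{i+1}}$ terms cancel, yielding the one-step descent estimate
\[
    \tfrac{1}{2}\norm{\nextz - \realoptu}_{M_{i+1} + 2\Lambda_{i+1}^*\Test_{i+1}\Gamma_{i+1}}^2 + \mathcal{V}_{i+1}(\realoptu) \le \tfrac{1}{2}\norm{\thisz - \realoptu}_{M_{i+1}}^2.
\]

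Finally, \eqref{eq:inertia-metric-cond} reads precisely $M_{i+1} + 2\Lambda_{i+1}^*\Test_{i+1}\Gamma_{i+1} \ge M_{i+2}$, so the left-hand quadratic form is bounded below by $\tfrac{1}{2}\norm{\nextz - \realoptu}_{M_{i+2}}^2$, and summing the resulting inequalities over $i = 0, \ldots, N-1$ telescopes to \eqref{eq:convergence-result-inertia-h}. I expect the only genuinely delicate step to be the first: the two algebraic identities are exactly what force the corrector $\PrecondPlus_{i+1}$ to take the form \eqref{eq:inertia-precondplus}, and the $\Lambda$-indexing (with its deliberate off-by-one, as flagged in the remark preceding the theorem) is where bookkeeping errors would most easily creep in. Once those identities are in hand, the remainder is the now-standard Pythagoras-and-telescope template of the testing framework and proceeds mechanically.
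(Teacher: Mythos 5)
Your proposal is correct and matches the paper's own proof essentially step for step: the same two algebraic identities ($\nextu - \this{\bar u} = \Lambda_{i+1}(\nextz-\thisz)$ and $\PrecondPlus_{i+1}(\nextu-\thisu) = \Gamma_{i+1}(\nextz-\nextu)$), the same testing against $\Lambda_{i+1}(\nextz-\realoptu)$ in the $\Test_{i+1}$ inner product, the same splitting of the $\Gamma_{i+1}$ term to invoke \eqref{eq:inertia-h-cond}, the same Pythagoras cancellation of the $\frac{1}{2}\norm{\nextz-\thisz}^2$ terms, and the same telescoping via \eqref{eq:inertia-metric-cond}. Nothing is missing.
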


\begin{proof}
    Application of $\iprod{\freevar}{\Test_{i+1}^*\Lambda_{i+1}(\nextz-\realoptu)}$ to the main inclusion of \eqref{eq:pp-inertia} yields for some $\nexxt{q} \in \tilde H_{i+1}(\nextu)$ that
    \begin{equation}
        \label{eq:ppinertia-tested}
        0 = \iprod{\nexxt{q}+\Precond_{i+1}(\nextu-\this{\bar u})+\PrecondPlus_{i+1}(\nextu-\thisu)}{\Lambda_{i+1}(\nextz-\realoptu)}_{\Test_{i+1}}.
    \end{equation}
    By \eqref{eq:inertia-z} and \eqref{eq:inertia-precondplus}, we deduce that
    \begin{equation*}
        \PrecondPlus_{i+1}(\nextu-\thisu)
        =\Gamma_{i+1}(\inv\Lambda_{i+1}-I)(\nextu-\thisu)
        =\Gamma_{i+1}(\nextz-\nextu).
    \end{equation*}
    By the definition $\this{\bar u}$ in \eqref{eq:pp-inertia}, and of the auxiliary sequence $\{\nextz\}_{i \in \N}$ in \eqref{eq:inertia-z}, taking $u^{-1} \defeq u^0$, moreover,
    \begin{equation}
        \label{eq:z-baru-relationship}
        \begin{split}
        \Lambda_{i+1}(\nextz-\thisz)
        &
        =\nextu-(I-\Lambda_{i+1})\thisu-\Lambda_{i+1}[\inv\Lambda_i\thisu-(\inv\Lambda_i-I)\prevu]
        \\
        &
        =\nextu-[I-\Lambda_{i+1}+\Lambda_{i+1}\inv\Lambda_i]\thisu-\Lambda_{i+1}(I-\inv\Lambda_i)\prevu
        \\
        &
        = \nextu - \this{\bar u}.
        \end{split}
    \end{equation}
    Therefore, we transform \eqref{eq:ppinertia-tested} into
    \begin{equation}
        \label{eq:ppinertia-tested-2}
        0 = \iprod{\nexxt{q}+\Precond_{i+1}\Lambda_{i+1}(\nextz-\thisz)+\Gamma_{i+1}(\nextz-\nextu)}{\Lambda_{i+1}(\nextz-\realoptu)}_{\Test_{i+1}}.
    \end{equation}
    Writing for brevity $A \defeq \Lambda_{i+1}^*\Test_{i+1}\Precond_{i+1}\Lambda_{i+1}$, which by assumption is self-adjoint and positive semi-definite, the standard three-point formula or Pythagoras' identity states
    \begin{equation*}
       \iprod{\nextz-\thisz}{\nextz-\realoptu}_{A}
       = \frac{1}{2}\norm{\nextz-\thisz}_{A}^2
           - \frac{1}{2}\norm{\thisz-\realoptu}_{A}^2
           + \frac{1}{2}\norm{\nextz-\realoptu}_{A}^2.
    \end{equation*}
    We therefore transform \eqref{eq:ppinertia-tested-2} into
    \begin{equation*}
        \begin{split}
        0 & = \iprod{\nexxt{q}+\Gamma_{i+1}(\nextz-\nextu)}{\nextz-\realoptu}_{\Lambda_{i+1}^*\Test_{i+1}}
            + \frac{1}{2}\norm{\nextz-\thisz}_{\Lambda_{i+1}^*\Test_{i+1}\Precond_{i+1}\Lambda_{i+1}}^2
            \\ & \phantom{ = }
            - \frac{1}{2}\norm{\thisz-\realoptu}_{\Lambda_{i+1}^*\Test_{i+1}\Precond_{i+1}\Lambda_{i+1}}^2
            + \frac{1}{2}\norm{\nextz-\realoptu}_{\Lambda_{i+1}^*\Test_{i+1}\Precond_{i+1}\Lambda_{i+1}}^2.
        \end{split}
    \end{equation*}
    Using \eqref{eq:inertia-h-cond}, we obtain
    \begin{equation*}
        \begin{split}
        0 & \ge \mathcal{V}_{i+1}(\realoptu)
            - \frac{1}{2}\norm{\thisz-\realoptu}_{\Lambda_{i+1}^*\Test_{i+1}\Precond_{i+1}\Lambda_{i+1}}^2
            \\ \MoveEqLeft[-1]
            + \frac{1}{2}\norm{\nextz-\realoptu}_{\Lambda_{i+1}^*\Test_{i+1}\Precond_{i+1}\Lambda_{i+1}}^2
            + \iprod{\Gamma_{i+1}(\nextz-\realoptu)}{\nextz-\realoptu}_{\Lambda_{i+1}^*\Test_{i+1}}.
        \end{split}
    \end{equation*}
    Using \eqref{eq:inertia-metric-cond} and summing over $i=0,\ldots,N-1$ establishes \eqref{eq:convergence-result-inertia-h}.
\end{proof}

\begin{remark}
    Consider $\Lambda_{i+1}=\Test_{i+1}=\Precond_{i+1}=I$. Then \eqref{eq:inertia-h-cond} reads
    \begin{equation}
        \label{eq:inertia-h-cond-simplified}
        \iprod{\tilde H_{i+1}(\nextu)}{\nextu-\realoptu}
        \ge
        \mathcal{V}_{i+1}(\realoptu)
        + \iprod{\nextu-\realoptu}{\nextu-\realoptu}_{\Gamma_{i+1}}
        - \frac{1}{2}\norm{\nextu-\thisu}^2.
    \end{equation}
    With $\tau L \le 1$ and $\Gamma_{i+1}=0$, take first $\tilde H_{i+1}(u)=\tau \grad F(\thisu)$, fixing $\grad F$ to be evaluated at the previous iteration to obtain a gradient descent method. Then it is easy to see how this estimate with $\mathcal{V}_{i+1}(\realoptu)=\tau[F(\nextu)-F(\realoptu)]$ follows from \cref{lemma:smoothness-three-point}. Thus $\mathcal{V}_{i+1}$ measures function value differences. Similarly, if $\tilde H_{i+1}(u)=\tau \subdiff G(u)$ for non-smooth but ($\gamma$-strongly) convex $G$, we can take $\Gamma_{i+1}=\tau\gamma I$ in \eqref{eq:inertia-h-cond-simplified}. We can also combine  $\tilde H_{i+1}(u)=\tau[\subdiff G(u) + \grad F(\thisu)]$ to obtain forward--backward splitting.
    In other words, \eqref{eq:inertia-h-cond} is an operator-relative inertia-aware convexity and smoothness condition, where the variable $\mathcal{V}_{i+1}(\realoptu)$ can be used to model function value and other gap estimates. We will need this operator-relativity to apply distinct inertial and testing parameters on the primal and dual variables of the PDPS; compare the block structure of \eqref{eq:pdps-h}.

    Minding this interpretation of \eqref{eq:inertia-h-cond}, the claim \eqref{eq:convergence-result-inertia-h} of the theorem with additional positivity and growth assumptions can thus be used to show the convergence of the sum of the value estimates $\mathcal{V}_{i+1}(\realoptu)$ to zero, as well as $z^N \to \realoptu$.
    Our task in the following is to obtain that growth, and to unroll the sum into a simple estimate.
\end{remark}

\section{Unrolling and parameter growth estimates}
\label{sec:unrolling}

To develop the inertial, corrected, primal-dual proximal splitting method, we will seek to satisfy the conditions of \cref{thm:general-inertia} for an algorithm inspired by the proximal point interpretation of the PDPS. Before we do this, in this section, we will prove general inertial unrolling arguments (\cref{sec:unrolling0}), refining \eqref{eq:intro-prox-inertia-argumentation} to the testing framework. We also prove parameter growth estimates with an eye towards converge rate proofs (\cref{sec:growth}), and demonstrate how our corrector term allows combining inertia with strong convexity-based acceleration (\ref{sec:inertia-sc}).
We finish by applying these estimates to the FISTA to demonstrate how it fits into our overall approach (\cref{sec:inertia-sc}). This also demonstrates how our approach works without the additional challenges of the primal-dual setup.

\subsection{Scalar parameter choices}
\label{sec:scalar-choices}

To place the general estimates that make up the major part of this section into context, we start by specialising \cref{thm:general-inertia} to $\Lambda_{i+1}=\lambda_i I$, $\Test_{i+1}=\tauTest_i I$, $\Step_{i+1}=\tau_i I$, $\Precond_{i+1} = I$, and  $\Gamma_{i+1} \defeq \gamma\tau_i I$ for some scalars $\lambda_i,\tauTest_i,\tau_i>0$ and $\gamma \ge 0$. Also taking $\tilde H_{i+1}(x) \defeq \tau_i(\subdiff G(x)+\grad F(\thisx))$, we immediately rewrite \eqref{eq:pp-inertia}, with change of symbol\footnotemark{} $u$ into $x$, as
\begin{equation}
    \label{eq:pp-inertia-scalar}
    \tag{PP-i}
    \left\{
    \begin{aligned}
    0 & \in \tau_i[\subdiff G(\nextx) + \grad F(\thisx)] + (\nextx-\this{\bar x})
    +\gamma\tau_i(\inv\lambda_i-1)(\nextx-\thisx),
    \\
    \nexxt{\bar x} & \defeq \nextx+\lambda_{i+1}(\inv\lambda_i-1)(\nextx-\thisx).
    \end{aligned}
    \right.
\end{equation}%
\addtocounter{footnote}{-1}%
Moreover, with the change of symbol\footnotemark{} of $z$ into $\zeta$, the auxiliary sequence defined in \eqref{eq:inertia-z} becomes
\begin{equation}
    \label{eq:inertia-z-scalar}
    \zeta^0 \defeq x^0
    \quad\text{and}\quad
    \nexxt{\zeta} \defeq \inv\lambda_i\nextx-(\inv\lambda_i-1)\thisx
    \quad (i \in \N).
\end{equation}
Immediately, \cref{thm:general-inertia} specialises into:

\footnotetext{We reserve the symbols $u$ and $z$ for the abstract (\cref{sec:inertia}) and primal-dual (\cref{sec:inertial-pdps}) problems. In the latter we take primal-dual pairs $u=(x,y)$ and $z=(\zeta,\eta)$, so the primal variables match the symbols of this section.}

\begin{corollary}
    \label{cor:simplealg}
    On a Hilbert space $X$, let $G: X \to \extR$ and $F: X \to \R$ be convex, proper, and lower semicontinuous, with $F$ differentiable.
    Let $\lambda_i,\tauTest_i,\tau_i>0$, ($i=0,\ldots,N-1$), and $\gamma \ge 0$.
    For an initial iterate $x^0=\bar x^0 \in X$, let $\{\nextx\}_{i=0}^{N-1}$ be generated by \eqref{eq:pp-inertia-scalar}, and the auxiliary sequence $\{\this{\zeta}\}_{i=0}^{N-1}$ by \eqref{eq:inertia-z-scalar}.
    For each $i=0,\ldots,N-1$, suppose for some  $\realoptx \in X$ and a placeholder value $\mathcal{V}_{i+1}(\realoptx) \in \R$, we have the estimate
    \begin{equation}
        \label{eq:cor:simplealg:condition}
        \tauTest_i\lambda_i\tau_i \iprod{\subdiff G(\nextx)+\grad F(\thisx)- \gamma(\nextx-\realoptx)}{\nexxt{\zeta}-\realoptx}
        \ge \mathcal{V}_{i+1}(\realoptx) - \frac{\lambda_i^2\tauTest_i}{2}\norm{\nexxt{\zeta}-\this{\zeta}}^2
    \end{equation}
    and the inequality
    \begin{equation}
        \label{eq:simplealg:accelrule}
        \lambda_{i+1}^2\tauTest_{i+1} \le \lambda_i^2\tauTest_i(1+2\gamma\inv\lambda_i\tau_i).
    \end{equation}
    Then
    \begin{equation}
        \label{eq:cor:simplealg:claim}
        \frac{\tauTest_N\lambda_N^2}{2}\norm{\zeta^N-\realoptx}^2
        + \sum_{i=0}^{N-1} \mathcal{V}_{i+1}(\realoptx)
        \le
        \frac{\tauTest_0\lambda_0^2}{2}\norm{\zeta^0-\realoptx}^2
        \quad (N \ge 1).
    \end{equation}
\end{corollary}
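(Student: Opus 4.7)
The plan is to view this corollary as a direct dictionary translation of \cref{thm:general-inertia} under the scalar substitutions announced at the start of the subsection. The bulk of the work is algebraic bookkeeping: compute what each of the operator-valued quantities appearing in \cref{thm:general-inertia} becomes when all operators collapse to scalar multiples of the identity, and then check that the two hypotheses of the theorem coincide, term by term, with \eqref{eq:cor:simplealg:condition} and \eqref{eq:simplealg:accelrule}.

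\textbf{Step 1 (reduction of the algorithm).} With $\Lambda_{i+1}=\lambda_i I$, $\Gamma_{i+1}=\gamma\tau_i I$, one has $\PrecondPlus_{i+1}=\Gamma_{i+1}(\inv\Lambda_{i+1}-I)=\gamma\tau_i(\inv\lambda_i-1)I$, so that \eqref{eq:pp-inertia} with $\tilde H_{i+1}(x)=\tau_i[\subdiff G(x)+\grad F(\thisx)]$ and $\Precond_{i+1}=I$ collapses to \eqref{eq:pp-inertia-scalar}. Likewise the auxiliary sequence \eqref{eq:inertia-z} becomes \eqref{eq:inertia-z-scalar} after renaming $z$ to $\zeta$.

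\textbf{Step 2 (checking the two hypotheses).} Since $\Test_{i+1}\Precond_{i+1}=\tauTest_i I\ge 0$ is trivially self-adjoint, we only need to verify \eqref{eq:inertia-h-cond} and \eqref{eq:inertia-metric-cond}. Compute
\begin{equation*}
    \Lambda_{i+1}^*\Test_{i+1}=\lambda_i\tauTest_i I,
    \qquad
    \Lambda_{i+1}^*\Test_{i+1}\Precond_{i+1}\Lambda_{i+1}=\lambda_i^2\tauTest_i I.
\end{equation*}
Substituting these, together with $\tilde H_{i+1}(\nextx)-\Gamma_{i+1}(\nextx-\realoptx)=\tau_i[\subdiff G(\nextx)+\grad F(\thisx)-\gamma(\nextx-\realoptx)]$, into \eqref{eq:inertia-h-cond} turns it verbatim into \eqref{eq:cor:simplealg:condition}. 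For \eqref{eq:inertia-metric-cond}, expand
\begin{equation*}
    \Lambda_{i+1}^*\Test_{i+1}(\Precond_{i+1}\Lambda_{i+1}+2\Gamma_{i+1})
    =\lambda_i\tauTest_i(\lambda_i+2\gamma\tau_i)I
    =\lambda_i^2\tauTest_i(1+2\gamma\inv\lambda_i\tau_i)I,
\end{equation*}
while $\Lambda_{i+2}^*\Test_{i+2}\Precond_{i+2}\Lambda_{i+2}=\lambda_{i+1}^2\tauTest_{i+1}I$. The operator inequality therefore reduces exactly to the scalar inequality \eqref{eq:simplealg:accelrule}.

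\textbf{Step 3 (reading off the conclusion).} With the identifications of Step 1 and 2, \cref{thm:general-inertia} applies, and \eqref{eq:convergence-result-inertia-h} at its endpoints reads $\frac{1}{2}\norm{\zeta^N-\realoptx}^2_{\lambda_N^2\tauTest_N I}+\sum_{i=0}^{N-1}\mathcal{V}_{i+1}(\realoptx)\le\frac{1}{2}\norm{\zeta^0-\realoptx}^2_{\lambda_0^2\tauTest_0 I}$, which is \eqref{eq:cor:simplealg:claim}. There is no genuine obstacle in the argument; the only point that requires care is the consistent off-by-one indexing between the scalar parameters $\lambda_i,\tauTest_i,\tau_i$ and the operator-valued $\Lambda_{i+1},\Test_{i+1},\Step_{i+1}$, which must be tracked so that the endpoint terms $\lambda_N^2\tauTest_N$ and $\lambda_0^2\tauTest_0$ come out correctly.
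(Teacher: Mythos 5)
Your proposal is correct and is exactly the argument the paper intends: the paper states that \cref{thm:general-inertia} "immediately specialises" into \cref{cor:simplealg} and gives no further proof, and your Steps 1--3 simply make that specialisation explicit, with all the scalar reductions ($\Lambda_{i+1}^*\Test_{i+1}\Precond_{i+1}\Lambda_{i+1}=\lambda_i^2\tauTest_i I$, the reduction of \eqref{eq:inertia-metric-cond} to \eqref{eq:simplealg:accelrule}, and the endpoint indices $\lambda_N^2\tauTest_N$, $\lambda_0^2\tauTest_0$) computed correctly.
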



\subsection{Inertial unrolling}
\label{sec:unrolling0}

We start by refining the proximal step inertial unrolling argument \eqref{eq:intro-prox-inertia-argumentation}. We will in \cref{sec:growth} see that the recurrence inequality \eqref{eq:h-sum-function-lambda-recurrence} assumed by the next lemma generalises the recurrence $\lambda_{i}^{-2}=\lambda_{i+1}^{-2}-\inv\lambda_{i+1}$ from the Introduction, satisfied by the FISTA.

\begin{lemma}
    \label{lemma:h-sum-function}
    Let $G: X \to \extR$ be convex, proper, and lower semicontinuous.
    Suppose $\lambda_i \in (0, 1]$ and $\tauTest_i,\tau­_i > 0$ satisfy the recurrence inequality
    \begin{equation}
        \label{eq:h-sum-function-lambda-recurrence}
        \tauTest_{i+1}\tau_{i+1}(1-\lambda_{i+1}) \le \tauTest_i\tau_i
        \quad (i=0,\ldots,N-1).
    \end{equation}
    For any given $\{\thisx\}_{i=0}^N$, let the auxiliary variables $\{\this{\zeta}\}_{i=0}^N$ be generated by \eqref{eq:inertia-z-scalar}.
    Assume $\subdiff G(\nextx)$ to be non-empty for $i=0,\ldots,N-1$, and $\realoptx \in \inv{[\subdiff G]}(0)$.
    Then
    \begin{equation}
        \label{eq:h-sum-function}
        \begin{split}
        \sum_{i=0}^{N-1}
            \inf_{\nexxt{q} \in \subdiff G(\nextx)}
            \tauTest_i\tau_i\lambda_i \iprod{\nexxt{q}}{\nexxt{\zeta} - \realoptx}
        &
        \ge
        \tauTest_{N-1}\tau_{N-1}(G(x^N) - G(\realoptx))
        \\ \MoveEqLeft[-1]
        -\tauTest_0\tau_0(1-\lambda_0)(G(x^0)-G(\realoptx)).
        \end{split}
    \end{equation}
\end{lemma}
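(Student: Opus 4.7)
The plan is to carry out the unrolling argument sketched around \eqref{eq:intro-prox-inertia-argumentation}, but with the scalings $\tauTest_i\tau_i$ in place of $\lambda_i^{-2}$ and with the recurrence \eqref{eq:h-sum-function-lambda-recurrence} in place of the FISTA recurrence. The decisive algebraic identity is
\begin{equation*}
    \lambda_i(\nexxt{\zeta}-\realoptx) = \lambda_i(\nextx-\realoptx) + (1-\lambda_i)(\nextx-\thisx),
\end{equation*}
which is immediate from \eqref{eq:inertia-z-scalar}. Applied to any $\nexxt{q} \in \subdiff G(\nextx)$, it splits the inner product into a ``progress'' part and an ``inertial'' part.

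Next, I would invoke convexity of $G$ twice: $\iprod{\nexxt{q}}{\nextx-\realoptx} \ge G(\nextx)-G(\realoptx)$ and $\iprod{\nexxt{q}}{\nextx-\thisx} \ge G(\nextx)-G(\thisx)$. Since $\lambda_i \in (0,1]$ both weights $\lambda_i$ and $1-\lambda_i$ are non-negative, so we may combine them to obtain
\begin{equation*}
    \lambda_i \iprod{\nexxt{q}}{\nexxt{\zeta}-\realoptx}
    \ge (G(\nextx)-G(\realoptx)) - (1-\lambda_i)(G(\thisx)-G(\realoptx)).
\end{equation*}
Multiplying by $\tauTest_i\tau_i>0$ and summing over $i=0,\dots,N-1$ then produces, after reindexing, the telescoping expression
\begin{equation*}
    \tauTest_{N-1}\tau_{N-1}(G(x^N)-G(\realoptx))
    + \sum_{j=1}^{N-1}\Bigl[\tauTest_{j-1}\tau_{j-1} - \tauTest_j\tau_j(1-\lambda_j)\Bigr](G(x^j)-G(\realoptx))
    - \tauTest_0\tau_0(1-\lambda_0)(G(x^0)-G(\realoptx))
\end{equation*}
as a lower bound on the weighted sum of inner products.

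To finish, I would use the two standing hypotheses of the lemma. First, the recurrence \eqref{eq:h-sum-function-lambda-recurrence} yields $\tauTest_{j-1}\tau_{j-1} - \tauTest_j\tau_j(1-\lambda_j) \ge 0$ for each $j$. Second, $\realoptx \in \inv{[\subdiff G]}(0)$ means $\realoptx$ is a global minimiser of $G$, so $G(x^j)-G(\realoptx) \ge 0$. Hence the middle telescoping sum is non-negative and may be dropped, giving \eqref{eq:h-sum-function} for any particular choice of $\nexxt{q} \in \subdiff G(\nextx)$; taking the infimum in each summand preserves the inequality.

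There is no genuine obstacle beyond bookkeeping: the only points to be mindful of are the sign of $1-\lambda_i$ (where $\lambda_i \le 1$ is used crucially to permit the convexity estimate with a non-negative weight) and the minimality of $\realoptx$ (where $\realoptx \in \inv{[\subdiff G]}(0)$ is used to discard the intermediate telescoping terms).
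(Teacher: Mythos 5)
Your proposal is correct and follows essentially the same route as the paper's proof: the same splitting identity $\lambda_i(\nexxt{\zeta}-\realoptx)=\lambda_i(\nextx-\realoptx)+(1-\lambda_i)(\nextx-\thisx)$, the same two convexity estimates, and the same telescoping argument using the recurrence \eqref{eq:h-sum-function-lambda-recurrence} together with $G(\thisx)\ge G(\realoptx)$. Your version merely makes the non-negativity of the intermediate telescoping coefficients explicit, which the paper leaves implicit.
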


\begin{proof}
    For all $i=0,\ldots,N-1$, pick $\nexxt{q} \in \subdiff G(\nextx)$, and define
    \[
        s_N^G
        =
        \sum_{i=0}^{N-1} \tauTest_i\tau_i\lambda_i\iprod{\nexxt{q}}{\nexxt{\zeta}-\realoptx}.
    \]
    Then we need to show that
    \begin{equation}
        \label{eq:h-sum-function-claim1}
        s_N^G
        \ge
        \tauTest_{N-1}\tau_{N-1}(G(x^N)-G(\realoptx))
        -\tauTest_0\tau_0(1-\lambda_0)(G(x^0)-G(\realoptx)).
    \end{equation}
    Observe that the auxiliary variables $\{\nexxt{\zeta}\}_{i=0}^{N-1}$ satisfy
    \begin{equation}
        \label{eq:inertia-nextz-realoptu-useful-for-unrolling-scalar}
        \lambda_{i}(\nexxt{\zeta}-\realoptx)
        =\lambda_{i}(\nextx-\realoptx)+(1-\lambda_{i})(\nextx-\thisx).
    \end{equation}
    With this and the convexity of $G$, we estimate
    \begin{equation}
        \label{eq:h-sum-function-first-ineq}
        \begin{split}
        s_N^G
        &
        =
        \sum_{i=0}^{N-1} \tauTest_i\tau_i\Bigl[
            \lambda_i\iprod{\nexxt{q}}{\nextx - \realoptx}
            +(1-\lambda_i)\iprod{\nexxt{q}}{\nextx - \thisx}
        \Bigr]
        \\
        &
        \ge
        \sum_{i=0}^{N-1} \tauTest_i\tau_i\Bigl[
            \lambda_i(G(\nextx) - G(\realoptx))
            +(1-\lambda_i)(G(\nextx)-G(\thisx))
        \Bigr]
        \\
        &
        =
        \sum_{i=0}^{N-1} \Bigl[
            \tauTest_i\tau_i(G(\nextx) - G(\realoptx))
            -\tauTest_i\tau_i(1-\lambda_i)(G(\thisx)-G(\realoptx))
        \Bigr].
        \end{split}
    \end{equation}
    Since $G(\thisx)\ge G(\realoptx)$, the recurrence inequality \eqref{eq:h-sum-function-lambda-recurrence} together with a telescoping argument now give \eqref{eq:h-sum-function-claim1}.
\end{proof}

We can also include a forward step in the unrolling argument:

\begin{lemma}
    \label{lemma:h-sum-function-smooth}
    Let $G, F: X \to \extR$ be convex, proper, and lower semicontinuous. Suppose $F$ has $L$-Lipschitz gradient, and that $\lambda_i \in (0, 1]$ and $\tauTest_i,\tau_i >0$ satisfy the recurrence inequality \eqref{eq:h-sum-function-lambda-recurrence} for $i=0,\ldots,N-1$.
    For any given $\{\thisx\}_{i=0}^N$, let the auxiliary variables $\{\this{\zeta}\}_{i=0}^N$ be generated by \eqref{eq:inertia-z-scalar}.
    Assume $\subdiff G(\nextx)$ to be non-empty for all $i=0,\ldots,N-1$, and that $\realoptx \in \inv{[\subdiff G+\grad F]}(0)$.
    Then
    \begin{multline}
        \label{eq:h-sum-function-smooth}
        \sum_{i=0}^{N-1} \inf_{\nexxt{q} \in \subdiff G(\nextx)} \left[
            \tauTest_i\tau_i\lambda_i \iprod{\nexxt{q}+ \grad F(\this{\bar x})}{\nexxt{\zeta} - \realoptx}
            +\frac{\tauTest_i\tau_i \lambda_i^2 L}{2}\norm{\nexxt{\zeta}-\this{\zeta}}^2
            \right]
        \\
        \ge
        \tauTest_{N-1}\tau_{N-1}[(G+F)(x^N) - (G+F)(\realoptx)]
        -\tauTest_{0}\tau_{0}(1-\lambda_0)[(G+F)(x^0) - (G+F)(\realoptx)].
    \end{multline}
\end{lemma}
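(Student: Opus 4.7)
The plan is to mimic the proof of \cref{lemma:h-sum-function}, handling the subgradient part of $\grad F + \subdiff G$ exactly as before via convexity, and handling the forward-step gradient $\grad F(\this{\bar x})$ with the three-point inequality \cref{lemma:smoothness-three-point} in place of plain convexity. I would combine the two contributions before telescoping, so that the joint inequality $(G+F)(\thisx) \ge (G+F)(\realoptx)$ (which follows from $\realoptx \in \inv{[\subdiff G + \grad F]}(0)$) can be used in place of the separate optimality $G(\thisx) \ge G(\realoptx)$ used in \cref{lemma:h-sum-function}. Individually $F(\thisx) \ge F(\realoptx)$ need not hold, which is why I would not try to apply \cref{lemma:h-sum-function} to $F$ directly.

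For the details: for each $i$, first I would use \eqref{eq:inertia-nextz-realoptu-useful-for-unrolling-scalar} to split $\lambda_i(\nexxt{\zeta}-\realoptx) = \lambda_i(\nextx-\realoptx) + (1-\lambda_i)(\nextx-\thisx)$. Applied to $\grad F(\this{\bar x})$, two invocations of \cref{lemma:smoothness-three-point} with $z = \this{\bar x}$, $x = \nextx$ and target points $\realoptx$ and $\thisx$ respectively give
\begin{align*}
    \iprod{\grad F(\this{\bar x})}{\nextx-\realoptx} & \ge F(\nextx) - F(\realoptx) - \tfrac{L}{2}\norm{\nextx-\this{\bar x}}^2, \\
    \iprod{\grad F(\this{\bar x})}{\nextx-\thisx} & \ge F(\nextx) - F(\thisx) - \tfrac{L}{2}\norm{\nextx-\this{\bar x}}^2.
\end{align*}
Weighting by $\lambda_i$ and $1-\lambda_i$ and summing, the two $-\tfrac{L}{2}\norm{\nextx-\this{\bar x}}^2$ terms collapse to a single copy. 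The scalar specialisation of the identity \eqref{eq:z-baru-relationship}, namely $\nextx - \this{\bar x} = \lambda_i(\nexxt{\zeta}-\this{\zeta})$, converts this error term into $\frac{L\lambda_i^2}{2}\norm{\nexxt{\zeta}-\this{\zeta}}^2$; multiplied by $\tauTest_i\tau_i$ and brought to the left-hand side this is exactly the extra quadratic term appearing in the statement.

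Adding the $F$-contribution to the subgradient estimate from \eqref{eq:h-sum-function-first-ineq} yields the same structure as in \cref{lemma:h-sum-function} but with $G$ replaced throughout by $G+F$. I can then finish by the telescoping step of \cref{lemma:h-sum-function}: the recurrence inequality \eqref{eq:h-sum-function-lambda-recurrence} guarantees $\tauTest_{i-1}\tau_{i-1} - \tauTest_i\tau_i(1-\lambda_i) \ge 0$, and the residual cross terms $[\tauTest_{i-1}\tau_{i-1} - \tauTest_i\tau_i(1-\lambda_i)]((G+F)(\thisx) - (G+F)(\realoptx))$ are non-negative by the joint optimality of $\realoptx$. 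The surviving boundary contributions are precisely those on the right-hand side of \eqref{eq:h-sum-function-smooth}. The only mildly delicate point is the bookkeeping that ties $\norm{\nextx-\this{\bar x}}$ to $\lambda_i\norm{\nexxt{\zeta}-\this{\zeta}}$, but this is forced by the construction \eqref{eq:inertia-z-scalar} of $\{\thisz\}_{i \in \N}$ and the inertial rebasing in \eqref{eq:pp-inertia-scalar} and requires no new work.
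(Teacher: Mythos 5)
Your proposal is correct and follows essentially the same route as the paper's proof: two weighted applications of \cref{lemma:smoothness-three-point} (with targets $\realoptx$ and $\thisx$) whose error terms collapse into the single $\frac{L}{2}\norm{\nextx-\this{\bar x}}^2 = \frac{L\lambda_i^2}{2}\norm{\nexxt{\zeta}-\this{\zeta}}^2$ absorbed by the extra quadratic term, followed by summing with the $G$-estimate \eqref{eq:h-sum-function-first-ineq} and telescoping using the joint optimality $(G+F)(\thisx)\ge(G+F)(\realoptx)$. You also correctly identify the one subtlety—that $F(\thisx)\ge F(\realoptx)$ need not hold separately, forcing the combination before telescoping—which is exactly how the paper argues.
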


\begin{proof}
    Similarly to \eqref{eq:z-baru-relationship},
    $
        \lambda_i(\nexxt{\zeta}-\this{\zeta})=(\nextx-\this{\bar x}).
    $
    By \eqref{eq:inertia-nextz-realoptu-useful-for-unrolling-scalar} and \cref{lemma:smoothness-three-point}, therefore
    \[
        \begin{split}
        s_N^F
        & \defeq \sum_{i=0}^{N-1}\left[
            \tauTest_i\tau_i\lambda_i \iprod{\grad F(\this{\bar x})}{\nexxt{\zeta} - \realoptx}
            +\frac{\tauTest_i\tau_i \lambda_i^2 L}{2}\norm{\nexxt{\zeta}-\this{\zeta}}^2
            \right]
        \\
        &
        =
        \sum_{i=0}^{N-1} \tauTest_i\tau_i\left[
            \lambda_i\iprod{\grad F(\this{\bar x})}{\nextx - \realoptx}
            +(1-\lambda_i)\iprod{\grad F(\this{\bar x})}{\nextx - \thisx}
            + \frac{L}{2}\norm{\nextx-\this{\bar x}}^2
        \right]
        \\
        &
        \ge
        \sum_{i=0}^{N-1} \tauTest_i\tau_i\left[
            \lambda_i(F(\nextx) - F(\realoptx))
            +(1-\lambda_i)(F(\nextx)-F(\thisx))
        \right]
        \\
        &
        =
        \sum_{i=0}^{N-1} \left[
            \tauTest_i\tau_i(F(\nextx) - F(\realoptx))
            -\tauTest_i\tau_i(1-\lambda_i)(F(\thisx)-F(\realoptx))
        \right].
        \end{split}
    \]
    Picking $\nexxt{q} \in \subdiff G(\nextx)$, ($i=0,\ldots,N-1$), and summing with the estimate \eqref{eq:h-sum-function-first-ineq} for $G$, we deduce
    \[
        s_N^G + s_N^F
        \ge
        \sum_{i=0}^{N-1} \tauTest_i\tau_i\left[
            [(G+F)(\nextx) - (G+F)(\realoptx)]
            -(1-\lambda_i)[(G+F)(\thisx)-(G+F)(\realoptx)]
        \right].
    \]
    Since $(G+F)(\thisx) \ge (G+F)(\realoptx)$, the recurrence inequality \eqref{eq:h-sum-function-lambda-recurrence} together with a telescoping argument now give the claim.
\end{proof}



\subsection{Parameter growth estimates}
\label{sec:growth}

As suggested by the unrolled estimates \eqref{eq:h-sum-function} and \eqref{eq:h-sum-function-smooth}, we want to make $\tauTest_{N-1}\tau_{N-1}$ grow as fast as possible while satisfying \eqref{eq:h-sum-function-lambda-recurrence} and $\lambda_i \in (0, 1]$. We now develop such estimates through a series of lemmas.
The first of these lemmas with $\epsilon=0$ is the FISTA rate argument \cite[Lemma 4.3]{beck2009fista}.

\begin{lemma}
    \label{lemma:lambda-recurrence}
    With $\lambda_0=1$, suppose $\lambda_{i}^{-2}-\epsilon\inv\lambda_i=\lambda_{i+1}^{-2}-\inv\lambda_{i+1}$ for some $\epsilon \in [-1, 1]$ and all $i=0,\ldots,N-1$.
    Then $\{\inv\lambda_i\}_{i \in \N}$ is non-decreasing, $\inv \lambda_N \ge 1+ (1-\epsilon)N/2$, and we equivalently define $\lambda_{i+1}$ through
    \begin{equation}
        \label{eq:lambda-recurrence}
        \lambda_{i+1}=\frac{2}{1+\sqrt{1+4(\lambda_i^{-2}-\epsilon\inv\lambda_i)}}.
    \end{equation}
\end{lemma}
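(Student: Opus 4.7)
The plan is to substitute $t_i \defeq \inv\lambda_i$ so that the recurrence becomes $t_{i+1}^2-t_{i+1}=t_i^2-\epsilon t_i$. Reading this as a quadratic in $t_{i+1}$ and selecting the positive root (the one that makes $t_{i+1}\ge 1$ plausible, and hence $\lambda_{i+1}\in(0,1]$) immediately yields $t_{i+1}=\frac{1+\sqrt{1+4(t_i^2-\epsilon t_i)}}{2}$, which inverts to the displayed formula \eqref{eq:lambda-recurrence}. The discriminant $1+4(t_i^2-\epsilon t_i)$ will be nonnegative along the induction since, as we will see, $t_i\ge 1$ and $\epsilon\le 1$ make $t_i^2-\epsilon t_i\ge 0$.

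For monotonicity I would show by induction on $i$ that $t_i\ge 1$ and $t_{i+1}\ge t_i$. The base case $t_0=1$ is the assumption $\lambda_0=1$. For the induction step it suffices, after reducing to the closed formula above, to verify $\sqrt{1+4(t_i^2-\epsilon t_i)}\ge 2t_i-1$; squaring (both sides nonnegative once $t_i\ge 1$) this simplifies to $4t_i(1-\epsilon)\ge 0$, which holds by $\epsilon\le 1$ and $t_i>0$. This gives $t_{i+1}\ge t_i\ge 1$ and closes the induction.

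For the quantitative bound $t_N\ge 1+(1-\epsilon)N/2$, the clean route is to prove the per-step increment estimate $t_{i+1}-t_i\ge (1-\epsilon)/2$; telescoping and using $t_0=1$ then yields the claim. Writing $\delta_i\defeq t_{i+1}-t_i\ge 0$ and substituting into the recurrence gives the scalar quadratic $\delta_i^2+(2t_i-1)\delta_i-(1-\epsilon)t_i=0$, whose positive root equals $\tfrac{1}{2}\bigl(1-2t_i+\sqrt{4t_i^2-4\epsilon t_i+1}\bigr)$. So the desired estimate $\delta_i\ge(1-\epsilon)/2$ reduces to $\sqrt{4t_i^2-4\epsilon t_i+1}\ge 2t_i-\epsilon$; since $2t_i-\epsilon\ge 2-1>0$, squaring both sides leaves $1\ge\epsilon^2$, which is precisely the hypothesis $\epsilon\in[-1,1]$.

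I do not anticipate a serious obstacle: the whole argument is elementary algebra on a one-step quadratic recurrence, with $\epsilon\in[-1,1]$ entering in exactly two places (nonnegativity of the square-root increment and validity of the per-step $(1-\epsilon)/2$ lower bound). The only point that warrants care is keeping track of which root of the quadratic in $t_{i+1}$ is taken, and verifying inductively that $t_i\ge 1$ so that all the sign conditions needed for squaring are indeed met.
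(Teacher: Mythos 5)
Your proof is correct, and its first two steps (deriving the closed form by solving the quadratic in $\inv\lambda_{i+1}$, and establishing monotonicity of $\{\inv\lambda_i\}$ using $\epsilon\le 1$) mirror the paper's argument in substance, though the paper phrases monotonicity via the auxiliary quantity $t_i=\lambda_i^{-2}-\inv\lambda_i$ being non-decreasing rather than by comparing square roots directly. Where you genuinely diverge is the quantitative bound $\inv\lambda_N\ge 1+(1-\epsilon)N/2$: the paper sums the recurrence to get $\lambda_N^{-2}-\inv\lambda_N=\sum_{i=0}^{N-1}(1-\epsilon)\inv\lambda_i$, feeds the inductive hypothesis $\inv\lambda_i\ge 1+(1-\epsilon)i/2$ into that sum, and then solves a quadratic inequality in $\inv\lambda_N$ (this is where the paper uses $\epsilon\ge -1$, to ensure $(1-\epsilon)/2\le 1$ in an intermediate estimate). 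You instead prove the per-step increment bound $\inv\lambda_{i+1}-\inv\lambda_i\ge(1-\epsilon)/2$ and telescope, with $\epsilon\in[-1,1]$ entering only through $1\ge\epsilon^2$ after squaring $\sqrt{4t_i^2-4\epsilon t_i+1}\ge 2t_i-\epsilon$. Your route is more elementary and arguably cleaner: it needs only the running invariant $t_i\ge 1$ rather than the full inductive hypothesis at all previous indices, and it isolates exactly where each endpoint of $[-1,1]$ matters. The one point both arguments share (and which you rightly flag) is that the recurrence alone does not pin down $\lambda_{i+1}$; one must take the root given by \eqref{eq:lambda-recurrence}, i.e.\ the larger root for $\inv\lambda_{i+1}$, for the monotonicity and growth claims to hold.
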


\begin{proof}
    The update \eqref{eq:lambda-recurrence} is a simple solution of the quadratic equation $\lambda_{i}^{-2}-\epsilon\inv\lambda_i=\lambda_{i+1}^{-2}-\inv\lambda_{i+1}$. 
    The latter also rearranges as
    \begin{equation}
        \label{eq:lambad-recurrence-v2}
        \lambda_{i+1}^{-2}-\inv\lambda_{i+1}=\lambda_i^{-2}-\inv\lambda_i+(1-\epsilon)\inv\lambda_i.
    \end{equation}
    This shows that $\{t_i \defeq \lambda_i^{-2}-\inv\lambda_i\}_{i \in \N}$ is non-decreasing. 
    Since $\inv\lambda_i=\tfrac{1}{2}(1+\sqrt{1+4t_i})$, we obtain the claim that $\{\inv\lambda_i\}_{i \in \N}$ is non-decreasing.

    We still need to prove the rate-of-growth claim. Since $\lambda_0=1$, \eqref{eq:lambad-recurrence-v2} also yields
    \[
        \lambda_N^{-2}-\inv\lambda_N=\sum_{i=0}^{N-1}(1-\epsilon)\inv\lambda_i.
    \]
    Let us make the inductive assumption that $\inv\lambda_i \ge 1+(1-\epsilon)i/2$ for $i=0,\ldots,N-1$. Clearly this holds for $N=0$ by the choice $\lambda_0=1$, taking care of the inductive base. For the inductive step, we get from above and $\epsilon \in [-1, 1]$ that
    \[
        \lambda_N^{-2}-\inv\lambda_N
        \ge (1-\epsilon)N + \frac{(1-\epsilon)^2}{4}N(N-1)
        \ge \frac{1-\epsilon}{2}N+\left(\frac{1-\epsilon}{2}\right)^2 N^2.
    \]
    This quadratic inequality together with $\lambda_N>0$ imply
    \[
        \inv\lambda_N
        \ge
        \frac{1+\sqrt{1+2(1-\epsilon)N+(1-\epsilon)^2N^2}}{2}
        =1+\frac{1-\epsilon}{2}N,
    \]
    which verifies the inductive step and establishes the claim.
\end{proof}

\begin{lemma}
    \label{lemma:tau-lambda-gamma-satisfaction}
    The sequence $\{\tauTest_i\tau_i\}_{i \in \N}$ is non-decreasing and the conditions \eqref{eq:simplealg:accelrule} and \eqref{eq:h-sum-function-lambda-recurrence} hold, more precisely
    \begin{equation}
        \label{eq:tau-lambda-gamma-satisfaction-eq}
        \lambda_{i+1}^2\tauTest_{i+1}=\lambda_i^2\tauTest_i(1+2\gamma\inv\lambda_i\tau_i)
        \quad\text{and}\quad
        \tauTest_{i+1}\tau_{i+1}(1-\lambda_{i+1})=(1-\epsilon\lambda_i)\tauTest_i\tau_i
    \end{equation}
    for all $i \in \N$ for some $\epsilon \in [0, 1)$ in the following cases:
    \begin{enumerate}[label=(\roman*)]
        \item\label{item:tau-lambda-gamma-satisfaction-1}
        If $\gamma=0$ and we take $\tau_i \equiv \tau$ for any $\tau>0$; $\tauTest_i \defeq \lambda_i^{-2}$; $\tauTest_0=\lambda_0=1$, and update $\lambda_{i+1}$ for any $\epsilon \in [0, 1]$ according to \eqref{eq:lambda-recurrence}.
        Then also
        \[
            \tauTest_N\tau_N \ge (1-\epsilon)^2N^2\tau/4
            \quad\text{and}\quad
            \lambda_N^2\tauTest_N = 1
            \quad (N \in \N).
        \]

        \item\label{item:tau-lambda-gamma-satisfaction-2}
        If $\gamma>0$ and we take $\lambda_i \equiv \lambda \in (0, 1)$ and $\tau_i \equiv \tau \defeq \lambda^2/[2\gamma(1-\lambda)]$ constants, and $\tauTest_{i+1}=c\tauTest_i$ with $c \defeq (1-\epsilon\lambda)/(1-\lambda) > 1$ for any $\epsilon \in [0, 1)$ and $\tauTest_0>0$. Then also
        \[
            \tauTest_N\tau_N \ge \tauTest_0\tau c^N
            \quad\text{and}\quad
            \lambda_N^2\tauTest_N \ge \lambda^2\tauTest_0 c^N
            \quad (N \in \N).
        \]

        \item\label{item:tau-lambda-gamma-satisfaction-3}
        If we are constrained to have $\tauTest_i=c_0 \tau_i^{-2}$ for some constant $c_0>0$, and with $\lambda_0=1$, $\tau_0>0$ and $\epsilon \in [0, 1)$ update
        \begin{equation}
            \label{eq:tau-lambda-gamma-satisfaction-3-rules}
            \tau_{i+1}=\frac{1-\lambda_{i+1}}{1-\epsilon\lambda_i}\tau_i
            \quad\text{and}\quad
            \lambda_{i+1}=\frac{\sqrt{\lambda_i^2+2\gamma\lambda_i\tau_i}}{1-\epsilon\lambda_i+\sqrt{\lambda_i^2+2\gamma\lambda_i\tau_i}}.
        \end{equation}
        Then, for some constants $c, c'>0$, for all $N \in \N$, also
        \begin{align*}
            \tauTest_N\tau_N & \ge c' N^2
            &\text{and}&&
            \lambda_N^2\tauTest_N & \ge cN^2
            &&(\gamma>0),
            \\
            \tauTest_N\tau_N & \ge (1-\epsilon)\inv\tau_0 N
            &\text{and}&&
            \lambda_N^2\tauTest_N & = c_0 \tau_0^{-2}
            &&(\gamma=0).
        \end{align*}
    \end{enumerate}
\end{lemma}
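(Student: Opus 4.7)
The plan is to verify each case by direct substitution into the two equations of \eqref{eq:tau-lambda-gamma-satisfaction-eq}, checking that the original conditions \eqref{eq:simplealg:accelrule} and \eqref{eq:h-sum-function-lambda-recurrence} hold (with equality when possible), and then to analyze the resulting recurrence in $\lambda_i$ (and $\tau_i$) to extract the growth rates for $\tauTest_N\tau_N$ and $\lambda_N^2\tauTest_N$. The monotonicity of $\{\tauTest_i\tau_i\}$ will follow immediately from \eqref{eq:h-sum-function-lambda-recurrence} once $\lambda_i\in(0,1]$ is confirmed.

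In case \ref{item:tau-lambda-gamma-satisfaction-1}, with $\gamma=0$ and $\tauTest_i=\lambda_i^{-2}$, the first equation becomes the trivial $1=1$, and the second rearranges as $\lambda_{i+1}^{-2}-\inv\lambda_{i+1}=\lambda_i^{-2}-\epsilon\inv\lambda_i$, which is exactly the recurrence of \cref{lemma:lambda-recurrence}; the growth bound $\inv\lambda_N\ge 1+(1-\epsilon)N/2$ from that lemma then yields $\tauTest_N\tau_N=\lambda_N^{-2}\tau\ge(1-\epsilon)^2N^2\tau/4$. In case \ref{item:tau-lambda-gamma-satisfaction-2}, plugging in the constant $\lambda$ and $\tau=\lambda^2/[2\gamma(1-\lambda)]$, the bracketed factor simplifies as $1+2\gamma\inv\lambda\tau=1/(1-\lambda)$, so the first equation holds as the inequality $c\le 1/(1-\lambda)$ (equality iff $\epsilon=0$), while the choice $c=(1-\epsilon\lambda)/(1-\lambda)$ makes the second hold with equality; iterating $\tauTest_{i+1}=c\tauTest_i$ then gives the stated geometric growth of both $\tauTest_N\tau_N$ and $\lambda_N^2\tauTest_N$.

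Case \ref{item:tau-lambda-gamma-satisfaction-3} is the heart of the proof. Introducing $\mu_i\defeq\lambda_i/\tau_i$, the substitution $\tauTest_i=c_0\tau_i^{-2}$ converts the first equation of \eqref{eq:tau-lambda-gamma-satisfaction-eq} into the recurrence $\mu_{i+1}^2=\mu_i^2+2\gamma\mu_i$, and the second into $\tau_{i+1}=\tau_i(1-\lambda_{i+1})/(1-\epsilon\lambda_i)$. Combining these and solving the resulting linear equation for $\lambda_{i+1}$ produces the explicit update \eqref{eq:tau-lambda-gamma-satisfaction-3-rules}. For $\gamma=0$, one has $\mu_N=\mu_0=1/\tau_0$, so immediately $\lambda_N^2\tauTest_N=c_0\mu_N^2=c_0\tau_0^{-2}$; the recurrence for $\lambda_N$ collapses to $1/\lambda_{i+1}=1/\lambda_i+(1-\epsilon)$, giving $1/\lambda_N=1+(1-\epsilon)N$, and hence $\tauTest_N\tau_N=c_0\mu_N/\lambda_N=c_0(1+(1-\epsilon)N)/\tau_0\ge(1-\epsilon)\inv\tau_0 N$ (with $c_0$ absorbed).

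The hard part will be the $\gamma>0$ subcase, where we need quadratic growth of both quantities. Here the argument splits in two: first, $\mu_{i+1}=\sqrt{\mu_i(\mu_i+2\gamma)}$ is non-decreasing and satisfies $\mu_{i+1}\le\mu_i+\gamma$, so the telescoped identity $\mu_{i+1}-\mu_i=2\gamma\mu_i/(\mu_{i+1}+\mu_i)\ge 2\gamma\mu_0/(2\mu_0+\gamma)$ (using monotonicity) yields $\mu_N\ge \bar c N$ for a constant $\bar c=\bar c(\gamma,\mu_0)>0$, hence $\lambda_N^2\tauTest_N=c_0\mu_N^2\ge c N^2$. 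Second, from $\tauTest_N\tau_N=c_0/\tau_N=c_0\mu_N/\lambda_N$ and the asymptotic $1/\lambda_{i+1}\approx 1/\lambda_i+(1-\epsilon)$ (which emerges as $\mu_i\to\infty$ makes the $\sqrt{1+2\gamma/\mu_i}$ factor tend to $1$), a careful comparison argument applied to $1/\lambda_N$ yields $\lambda_N\le C/N$, so that $\tauTest_N\tau_N\ge c_0\bar c N\cdot N/C=c'N^2$. The delicate step is making rigorous the two-term asymptotic extraction—that $\mu_N$ grows linearly with slope approaching $\gamma$ and $1/\lambda_N$ linearly with slope approaching $(1-\epsilon)$—uniformly enough to obtain the $N^2$ lower bounds for all $N\in\N$, not merely asymptotically; minor initial deficits can be swept into the unspecified constants $c,c'$.
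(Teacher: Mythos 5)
Your cases \ref{item:tau-lambda-gamma-satisfaction-1} and \ref{item:tau-lambda-gamma-satisfaction-2} match the paper's argument essentially step for step (and you are in fact more careful than the paper in noting that in case \ref{item:tau-lambda-gamma-satisfaction-2} the first relation of \eqref{eq:tau-lambda-gamma-satisfaction-eq} is an equality only when $\epsilon=0$, though the inequality \eqref{eq:simplealg:accelrule} still holds). The verification of the update rules in case \ref{item:tau-lambda-gamma-satisfaction-3} via $\mu_i=\lambda_i/\tau_i$ (the paper's $\theta_i=\lambda_i\inv\tau_i$) and your linear lower bound $\mu_N\ge\bar c N$ from $\mu_{i+1}-\mu_i=2\gamma\mu_i/(\mu_{i+1}+\mu_i)$ are also correct and, if anything, more explicit than the paper's appeal to the standard PDPS acceleration analysis.

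There are two genuine problems. First, your opening claim that monotonicity of $\{\tauTest_i\tau_i\}$ ``follows immediately from \eqref{eq:h-sum-function-lambda-recurrence}'' is backwards: that inequality reads $\tauTest_{i+1}\tau_{i+1}(1-\lambda_{i+1})\le\tauTest_i\tau_i$ and hence bounds $\tauTest_{i+1}\tau_{i+1}$ from \emph{above}, not below. Non-decrease has to be checked case by case (in \ref{item:tau-lambda-gamma-satisfaction-1} from $\inv\lambda_i$ non-decreasing, in \ref{item:tau-lambda-gamma-satisfaction-2} from $c>1$, in \ref{item:tau-lambda-gamma-satisfaction-3} from $\inv\tau_{i+1}=\inv\tau_i-\epsilon\theta_i+\theta_{i+1}\ge\inv\tau_i$). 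Second, in case \ref{item:tau-lambda-gamma-satisfaction-3} with $\gamma>0$ you leave the bound $\inv\lambda_N\gtrsim N$ as an acknowledged ``delicate step,'' and your proposed route through a two-term asymptotic slope extraction is indeed the hard way to do it: the recursion $\inv\lambda_{i+1}=1+(\theta_i/\theta_{i+1})(\inv\lambda_i-\epsilon)$ is a near-contraction with ratio tending to $1$, so a naive comparison does not obviously yield linear growth uniformly in $N$. The paper sidesteps this entirely: the second relation of \eqref{eq:inertia-accel-constr-conds} telescopes directly as
\begin{equation*}
    \inv\tau_N=\inv\tau_0-\epsilon\theta_0+\theta_N+\sum_{i=1}^{N-1}(1-\epsilon)\theta_i,
\end{equation*}
and inserting $\theta_i\ge ci$ immediately gives $\inv\tau_N\ge c'N^2$, hence $\tauTest_N\tau_N=c_0\inv\tau_N\ge c_0c'N^2$, with no asymptotic argument needed. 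You should replace your $\lambda_N\le C/N$ detour by this telescoping (equivalently, telescope $\theta_N\inv\lambda_N=\inv\tau_N$ directly); as written, that step of your proof is not complete.
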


The choice $\epsilon=0$ in \cref{item:tau-lambda-gamma-satisfaction-3} would be the simplest, and also optimal in the sense that both \eqref{eq:simplealg:accelrule} and \eqref{eq:h-sum-function-lambda-recurrence} would hold as equalities.
However, we will see that a non-zero choice performs significantly better in practise---with the same asymptotic guarantees.

\begin{proof}
    It is clear that the inequalities \eqref{eq:simplealg:accelrule} and \eqref{eq:h-sum-function-lambda-recurrence} follow from \eqref{eq:tau-lambda-gamma-satisfaction-eq} and $\epsilon \ge 0$.

    \cref{item:tau-lambda-gamma-satisfaction-1}
    Since $\gamma=0$, the first part of \eqref{eq:tau-lambda-gamma-satisfaction-eq} holds when $\tauTest_i\lambda_i^2=\tauTest_0\lambda_0^2$. This follows from our choices $\tauTest_0=\lambda_0=1$ and $\tauTest_i=\lambda_i^{-2}$. Inserting $\tauTest_i=\lambda_i^{-2}$ and $\tau_i \equiv \tau$, the second part of \eqref{eq:tau-lambda-gamma-satisfaction-eq} reduces to $\lambda_{i}^{-2}-\epsilon\inv\lambda_i=\lambda_{i+1}^{-2}-\inv\lambda_{i+1}$.
    \Cref{lemma:lambda-recurrence} shows that $\tauTest_{N}\tau_{N}=\lambda_{N}^{-2}\tau \ge \tau (1-\epsilon)^2N^2/4$. This is the claimed estimate.
    Moreover, since $\{\inv\lambda_i\}_{i \in \N}$ is non-decreasing by \cref{lemma:lambda-recurrence}, we see that $\{\tauTest_i\tau_i\}_{i \in \N}$ is non-decreasing.

    \cref{item:tau-lambda-gamma-satisfaction-2}
    The second part of \eqref{eq:tau-lambda-gamma-satisfaction-eq} agrees with the chosen update rule for $\tauTest_{i+1}$.
    Inserting this rule into the first part of \eqref{eq:tau-lambda-gamma-satisfaction-eq} and using the fact that also $\tau_i \equiv \tau$, we see the latter to be satisfied if $(1-\lambda)(1+2\gamma\inv\lambda\tau)=1$. This is satisfied by our chosen
    $\tau=\lambda^2/[2\gamma(1-\lambda)]$.
    Since $\tau$ and $\lambda$ are constants, the claimed growth estimates follow from $\tauTest_N=\tauTest_0 c^N$.
    Clearly $\{\tauTest_i\tau_i=\tauTest_0\tau c^i\}_{i \in \N}$ is non-decreasing.

    \cref{item:tau-lambda-gamma-satisfaction-3}
    Finally, with $\tauTest_i= c_0 \tau_i^{-2}$, \eqref{eq:tau-lambda-gamma-satisfaction-eq} holds if
    \begin{equation}
        \label{eq:inertia-accel-constr-conds}
        \lambda_{i+1}^2\tau_{i+1}^{-2} = \lambda_i^2\tau_i^{-2}(1+2\gamma\inv\lambda_i\tau_i)
        \quad\text{and}\quad
        \tau_{i+1}^{-1}(1-\lambda_{i+1}) = (1-\epsilon\lambda_i) \tau_i^{-1}.
    \end{equation}
    The latter agrees with our update rule for $\tau_{i+1}$. Inserting this, the former holds if $\lambda_{i+1}(1-\epsilon\lambda_i)=(1-\lambda_{i+1})\sqrt{\lambda_i^2+2\gamma\lambda_i\tau_i}$. This is satisfied by our update rule for $\lambda_{i+1}$.

    To derive the growth estimates, suppose first that $\gamma>0$. With $\theta_i \defeq \lambda_i\tau_i^{-1}=c_0^{-1/2}\lambda_i\tauTest_i^{1/2}$, the first part of \eqref{eq:inertia-accel-constr-conds} reads $\theta_{i+1}^2=\theta_i^2(1+2\gamma\theta_i)$.
    Thus $\{\theta_i\}_{i \in \N}$ is non-decreasing, moreover, the recurrence is of the same form as the standard acceleration rule for the PDPS, where we would have $\tauTest_i=c_0 \tau_i^{-2}$ in place of $\theta_i$; compare \cref{sec:introduction} and \cite{chambolle2010first,tuomov-cpaccel}.
    Hence $\theta_i \ge c i$ for some constant $c>0$.
    Since $\lambda_i^2\tauTest_i=c_0 \theta_i^2$, this gives one of the claimed rates.
    From the second part of \eqref{eq:inertia-accel-constr-conds},
    \begin{equation}
        \label{eq:tau-lambda-gamma-satisfaction-3-1}
        \tau_{i+1}^{-1}
        =(1-\epsilon\lambda_i)\tau_i^{-1}+\lambda_{i+1}\tau_{i+1}^{-1}
        =\tau_i^{-1}-\epsilon\theta_i+\theta_{i+1}.
    \end{equation}
    Repeating this recursively, since $\theta_0=\inv\tau_0$, for some constant $c'>0$,
    \begin{equation}
        \label{eq:tau-lambda-gamma-satisfaction-3-2}
        \inv \tau_N
        = \inv\tau_0 - \epsilon\theta_0 + \theta_N + \sum_{i=1}^{N-1}(1-\epsilon)\theta_i
        \ge (1-\epsilon)\inv\tau_0 + c N + \sum_{i=0}^{N-1} (1-­\epsilon) c i \ge c' N^2.
    \end{equation}
    Therefore also $\tauTest_N\tau_N=c_0\inv\tau_N$ has the claimed growth estimate.

    Suppose then that $\gamma=0$. We obtain \eqref{eq:tau-lambda-gamma-satisfaction-3-1} as above, however now with constant $\theta_i \equiv \lambda_i\inv\tau_i=\lambda_0\inv\tau_0=\inv\tau_0$. Therefore, arguing as in \eqref{eq:tau-lambda-gamma-satisfaction-3-2} only yields $\inv \tau_N \ge (1-\epsilon)\inv\tau_0N$. This is again the claimed growth estimate.

    Finally, since  $\{\theta_i\}_{i \in \N}$ is in both cases ($\gamma=0$ or $\gamma>0$) non-decreasing, we see from \eqref{eq:tau-lambda-gamma-satisfaction-3-1} that $\{\inv\tau_i\}_{i \in \N}$ and consequently $\{\tauTest_i\tau_i=c_0\inv\tau_i\}_{i \in \N}$ are non-decreasing.
\end{proof}

\subsection{Combining inertia with strong convexity}
\label{sec:inertia-sc}

Let $G: X \to \extR$ be proper, lower semicontinuous, and (strongly) convex with parameter $\gamma \ge 0$. We now demonstrate with a simple inertial proximal point method, \eqref{eq:pp-inertia-scalar} with $F=0$, how we are able to incorporate strong convexity based acceleration with inertia. We do this by considering the convex functions
\begin{equation}
    \label{eq:g-gamma}
    G_\gamma(x; \realoptx) \defeq G(x) - \frac{\gamma}{2}\norm{x-\realoptx}^2.
\end{equation}
Indeed, $0 \in \subdiff G(\realoptx)$ if and only if $0 \in \subdiff G_\gamma(\realoptx; \realoptx)$ with
$
    \subdiff G_\gamma(x; \realoptx)=\subdiff G(x)-\gamma(x-\realoptx).
$
\Cref{lemma:h-sum-function} applied to $G_\gamma(\freevar; \realoptx)$ thus shows
\begin{equation}
    \label{eq:h-sum-function-gamma}
    \sum_{i=0}^{N-1} \mathcal{V}_{i+1}(\realoptx)
    \ge
    \tauTest_{N-1}\tau_{N-1}[G_\gamma(x^N;\realoptx) - G_\gamma(\realoptx;\realoptx)]
    -\tauTest_0\tau_0(1-\lambda_0)[G_\gamma(x^0; \realoptx)-G_\gamma(\realoptx; \realoptx)]
\end{equation}
for
\[
    \mathcal{V}_{i+1}(\realoptx) \defeq \inf_{\nexxt{q} \in \subdiff G(\nextx)} \tauTest_i\tau_i\lambda_i \iprod{\nexxt{q}-\gamma(\nextx-\realoptx)}{\nexxt{\zeta} - \realoptx}.
\]
This choice of $\mathcal{V}_{i+1}(\realoptx)$ by definition verifies \eqref{eq:cor:simplealg:condition}.
It therefore follows from \cref{cor:simplealg} with $F \equiv 0$ that
\begin{multline*}
    \frac{\tauTest_N\lambda_N^2}{2}\norm{\zeta^N-\realoptx}^2
    + \tauTest_{N-1}\tau_{N-1}[G_\gamma(x^N;\realoptx) - G_\gamma(\realoptx;\realoptx)]
    \\
    \le
    \frac{\tauTest_0\lambda_0^2}{2}\norm{\zeta^0-\realoptx}^2
    +\tauTest_0\tau_0(1-\lambda_0)[G_\gamma(x^0; \realoptx)-G_\gamma(\realoptx; \realoptx)]
    \quad (N \ge 1).
\end{multline*}
If we choose our parameters according to \cref{lemma:tau-lambda-gamma-satisfaction}\,\ref{item:tau-lambda-gamma-satisfaction-2} , then $\tauTest_N\tau_N$ and $\lambda_N^2\tauTest_N$ grow exponentially. Crucially $G_\gamma(x^N;\realoptx) \ge G_\gamma(\realoptx;\realoptx)$, so this implies linear convergence of $G_\gamma(x^N;\realoptx) \to G_\gamma(\realoptx;\realoptx)$ and of the auxiliary sequence $\zeta^N \to \realoptx$.

\subsection{Connections}

We now discuss how our results relate to known algorithms.

\begin{example}[FISTA]
    \label{example:fista}
    Let $G: X \to \extR$ and $F: X \to \R$ be convex, proper, and lower semicontinuous with $\grad F$ existing and $L$-Lipschitz.
    If $\gamma=0$, take $\tau_i \equiv \tau \in (0, 1/L]$, and $\lambda_{i+1}$ by \eqref{eq:lambda-recurrence} for $\lambda_0=1$ and $\epsilon=0$.
    Then given initial iterates $\bar x^0 = x^0$, \eqref{eq:pp-inertia-scalar} becomes the inertial forward--backward splitting or FISTA
    \[
        \left\{
        \begin{aligned}
        \nextx & \defeq \prox_{\tau G}(\this{\bar x}-\tau \grad F(\this{\bar x})),
        \\
        \nexxt{\bar x} & \defeq \nextx+\lambda_{i+1}(\inv\lambda_i-1)(\nextx-\thisx).
        \end{aligned}
        \right.
    \]
    We have $G(x^N) + F(x^N) \to G(\realoptx) + F(\realoptx)$ at the rate $O(1/N^2)$ for any minimiser $\realoptx$ of $G+F$.
\end{example}

\begin{demonstration}
    The algorithm is clear from \cref{eq:pp-inertia-scalar}.
    \Cref{lemma:tau-lambda-gamma-satisfaction}\cref{item:tau-lambda-gamma-satisfaction-1} shows that \eqref{eq:simplealg:accelrule} and \eqref{eq:h-sum-function-lambda-recurrence} are satisfied, $\tauTest_N\tau_N \ge \tau N^2/4$, and $\lambda_N^2\tauTest_N = 1$.
    Clearly \eqref{eq:cor:simplealg:condition} holds with
    \[
        \mathcal{V}_{i+1}(\realoptx) \defeq \inf_{\nexxt{q} \in \subdiff G(\nextx)} \tauTest_i\tau_i\lambda_i \iprod{\nexxt{q}+\grad F(\this{\bar x})-\gamma(\nextx-\realoptx)}{\nexxt{\zeta} - \realoptx}
        + \frac{\lambda_i^2\tauTest_i}{2}\norm{\nexxt{\zeta}-\this{\zeta}}^2.
    \]
    Using in \eqref{eq:h-sum-function-smooth} the fact that $\tau_i L \le 1$ , similarly to \cref{sec:inertia-sc}, we may therefore refer to \cref{lemma:h-sum-function-smooth,cor:simplealg} to verify the estimate
    \begin{multline}
        \label{eq:cor:simplealg:claim-fista}
        \frac{\tauTest_N\lambda_N^2}{2}\norm{\zeta^N-\realoptx}^2
        +\tauTest_{N-1}\tau_{N-1}[(\hat G_\gamma+F)(x^N) - (\hat G_\gamma+F)(\realoptx)]
        \\
        \le
        \frac{\tauTest_0\lambda_0^2}{2}\norm{\zeta^0-\realoptx}^2
        +\tauTest_{0}\tau_{0}(1-\lambda_0)[(\hat G_\gamma+F)(x^0) - (\hat G_\gamma+F)(\realoptx)],
    \end{multline}
    where we use the short-hand notation $\hat G_\gamma \defeq G_\gamma(\freevar; \realoptx)$.
    Inserting our choice $\lambda_0=1$ with $\gamma=0$, $\zeta^0=x^0$ and the growth estimates from above, we obtain
    \[
        (G+F)(x^N)
        \le
        (G+F)(\realoptx)
        + \frac{2}{(N-1)^2\tau}\norm{x^0-\realoptx}^2
        \quad (N \ge 2).
    \]
    This verifies the claimed convergence rates.
\end{demonstration}

\begin{example}[FISTA combined with strong convexity]
    In \cref{example:fista}, suppose in addition that $G$ is strongly convex with parameter $\gamma > 0$. Take $0 < \lambda \le
    \sqrt{L^{-2}\gamma^2+2\inv L\gamma} - \inv L\gamma$
    and $\tau \defeq \lambda^2/[2\gamma(1-\lambda)]$. Also let $\tilde\tau \defeq \tau/[1+(\inv\lambda-1)\gamma\tau]$.
    Then  \eqref{eq:pp-inertia-scalar} with $\lambda_i \equiv \lambda$ and $\tau_i \equiv \tau$ becomes
    \[
        \left\{
        \begin{aligned}
        \nextx & \defeq \prox_{\tilde\tau G}(\this{\tilde x}-\tilde\tau\grad F(\this{\bar x})),
        \\
        \nexxt{\bar x} & \defeq \nextx+\lambda(\inv\lambda-1)(\nextx-\thisx),
        \\
        \nexxt{\tilde x} & \defeq (\nexxt{\bar x}+\gamma\tau(\inv\lambda-1)\nextx)/(1+\gamma\tau(\inv\lambda-1)).
        \end{aligned}
        \right.
    \]
    Both $G_\gamma(x^N; \realoptx) + F(x^N) \to G_\gamma(\realoptx; \realoptx) + F(\realoptx)$ and $\zeta^N \to \realoptx$ at the linear rate $O((1-\lambda)^N)$.
\end{example}

\begin{demonstration}
   To derive the claimed algorithm, we divide the first step of \eqref{eq:pp-inertia-scalar} by $1+\gamma\tau(\inv\lambda-1)$. This yields $\this{\tilde x} - \tilde\tau \grad F(\this{\bar x}) \in \tilde\tau \subdiff G(\nextx)+\nextx$; the rest follows from the definition of $\this{\bar x}$ and the proximal map.
   Observe that $\lambda \in (0, 1)$.  \Cref{lemma:tau-lambda-gamma-satisfaction}\cref{item:tau-lambda-gamma-satisfaction-2} with $\epsilon=0$ and $\tauTest_0=1$ thus proves \eqref{eq:simplealg:accelrule} and \eqref{eq:h-sum-function-lambda-recurrence}, and shows that $\tauTest_N\tau_N \ge \tau/(1-\lambda)^N$ and $\lambda_N^2\tauTest_N \ge \lambda^2/(1-\lambda)^N$. As in \cref{example:fista}, we now obtain \eqref{eq:cor:simplealg:claim-fista} provided $\tau L \le 1$. With our chosen $\tau = \lambda^2/[2\gamma(1-\lambda)]$, this constraints resolves as our assumed upper bound $\lambda \le \sqrt{L^{-2}\gamma^2+2\inv L\gamma} - \inv L\gamma$.
   With the growth rates above and $\tau = \lambda^2/[2\gamma(1-\lambda)]$, \eqref{eq:cor:simplealg:claim-fista} after some rearrangements yields
   \begin{multline*}
        (1-\lambda)\norm{\zeta^N-\realoptx}^2
        +\inv\gamma[(\hat G_\gamma+F)(x^N) - (\hat G_\gamma+F)(\realoptx)]
        \\
        \le
        (1-\lambda)^{N+1}\left(
        \norm{x^0-\realoptx}^2
        +\inv\gamma[(\hat G_\gamma+F)(x^0) - (\hat G_\gamma+F)(\realoptx)]
        \right).
   \end{multline*}
   Thus the claimed convergence rates hold.
\end{demonstration}

\begin{remark}
    Douglas--Rachford splitting for the problem $\min_{x \in X}~F(x)+G(x)$ reads
    \begin{equation*}
        \left\{
        \begin{aligned}
        \nextx &= \prox_{\gamma F}(\this{v}),\\
        \nexty &= \prox_{\gamma G}(2\nextx - \this{v}),\\
        \nexxt{v} &= \this{v} + \nexty - \nextx.
        \end{aligned}
        \right.
    \end{equation*}
    It can be presented in the form \eqref{eq:ppext} with $\tilde H_{i+1}=H$ for $u=(x, y, v)$ and
    \begin{equation*}
        H(u) \defeq
        \begin{pmatrix}
            \tau \subdiff F(x) + y - v \\
            \tau \subdiff G(y) + v - x \\
            x - y
        \end{pmatrix}
        \quad\text{and}\quad
        \Precond_{i+1} \defeq
        \begin{pmatrix}
            0 & 0 & 0 \\
            0 & 0 & 0 \\
            0 & 0 & I
        \end{pmatrix}.
    \end{equation*}
    With
    \begin{equation*}
        \Gamma_{i+1}
        \defeq
        \begin{pmatrix}
            0 & I & -I \\
            -I & 0 & I \\
            I & -I & 0
        \end{pmatrix},
        \quad\text{taking instead}\quad
        \Precond_{i+1} \defeq
        \begin{pmatrix}
            0 & 0 & 0 \\
            0 & 0 & 0 \\
            0 & 0 & \inv\lambda_{i+1} I
        \end{pmatrix}.
    \end{equation*}
    our approach can be used to construct a corrected inertial Douglas--Rachford splitting. We will, however, not pursue this. Instead, in the next section we take the primal-dual proximal splitting as an example of an algorithm with a non-trivial corrector and $\Gamma_{i+1}$.

    Inertial Douglas--Rachford splitting has previously been studied in  \cite{patrinos2014douglas}. The ``corrected'' algorithm derived from our approach will be different. Another accelerated approach is considered in \cite{bredies2014preconditioned}. They apply Douglas--Rachford splitting to $H$ defined in \eqref{eq:pdps-h} by writing it in the form $H(u)=\subdiff \hat G(u)+\Gamma u$ for the convex function $\hat G(u) \defeq G(x)+F^*(y)$ and an anti-symmetric operator $\Gamma$, as we did in \cref{sec:introduction}. What this ingenious approach yields is essentially a doubly over-relaxed PDPS.
\end{remark}

\section{Inertial primal-dual proximal splitting}
\label{sec:inertial-pdps}

We now return to the saddle point problem \eqref{eq:saddle}.
We suppose $G: X \to \extR$ and $F^*: Y \to \extR$ are (strongly) convex with factors $\gamma, \rho \ge 0$, and $K \in \linear(X; Y)$. Recalling \eqref{eq:pdps-h}, for some step length, testing, and inertial parameters $\tau_i,\sigma_{i+1},\tauTest_i,\sigmaTest_{i+1},\lambda_i,\mu_{i+1}>0$, we then take $\tilde H_{i+1}=\inv \Step_{i+1} H$ as well as
\begin{subequations}
\label{eq:pdps-inertia-structure}
\begin{align}%
    H(u) & \defeq
        \begin{pmatrix}
            \subdiff G(x) + K^* y \\
            \subdiff F^*(y) -K x
        \end{pmatrix},
    &
    \Step_{i+1} & \defeq \begin{pmatrix} \tau_i I & 0 \\ 0 & \sigma_{i+1} I \end{pmatrix},
    &
    \Test_{i+1} & \defeq \begin{pmatrix} \tauTest_i I & 0 \\ 0 & \sigmaTest_{i+1} I \end{pmatrix},
    \\
    \Gamma_{i+1} & \defeq \begin{pmatrix} \gamma\tau_i I & \tau_i K^* \\ -\sigma_{i+1} K & \rho\sigma_{i+1} I \end{pmatrix},
    &\Lambda_{i+1} & \defeq \begin{pmatrix} \lambda_i I & 0 \\ 0 & \mu_{i+1} I \end{pmatrix},&
    \text{and}
    \\
    \label{eq:pdps-inertia-precond}
    \Precond_{i+1} & = \begin{pmatrix} I & -\inv\mu_{i+1}\tau_i K^* \\
    -\inv\lambda_i\sigma_{i+1}\omega_i K & I \end{pmatrix}
    \span\quad\text{for}\span
    &\omega_i & \defeq \frac{\lambda_i\tauTest_i\tau_i}{\lambda_{i+1}\tauTest_{i+1}\tau_{i+1}}.
\end{align}
\end{subequations}
We then observe from \eqref{eq:inertia-precondplus} that the corrector
\begin{equation}
    \label{eq:pdps-inertia-precondplus}
    \PrecondPlus_{i+1}=\Gamma_{i+1}(\inv\Lambda_{i+1}-I)
    =
    \begin{pmatrix}
        \gamma\tau_i (\inv\lambda_i-1)I & \tau_i (\inv\mu_{i+1}-1)K^* \\
        -\sigma_{i+1} (\inv\lambda_i-1)K
        & \rho\sigma_{i+1}(\inv\mu_{i+1}-1) I
    \end{pmatrix}.
\end{equation}

We need to satisfy the conditions of \cref{thm:general-inertia} for this setup, in particular \eqref{eq:inertia-h-cond} for some $\mathcal{V}_{k+1}(\realoptu)$ and $\realoptu \in \inv H(0)$, and show that the estimate \eqref{eq:convergence-result-inertia-h} is useful, in particular that $\Test_{i+1}\Precond_{i+1}$ and $\sum_{i=0}^{N-1} \mathcal{V}_{i+1}(\realoptu)$ are positive, the former grows at a good rate, and that the latter becomes a useful gap functional.
In first instance, we intend to develop it into (a multiple of) the Lagrangian (duality) gap
\begin{equation}
    \label{eq:gap}
    \gap(x, y; \realoptx, \realopty) \defeq
    \bigl(G(x) + \iprod{\realopty}{K x}  - F^*(\realopty)\bigr)
    -\bigl(G(\realoptx) + \iprod{y}{K \realoptx} -  F^*(y)\bigr).
\end{equation}
Recalling $G_\gamma$ and $(F^*)_\rho$ defined in \eqref{eq:g-gamma}, we also introduce the strong convexity adjusted gap
\begin{equation}
    \label{eq:gap-sc}
    \gap_{\gamma,\rho}(x, y; \realoptx, \realopty) \defeq
    \bigl(G_\gamma(x; \realoptx) + \iprod{\realopty}{K x}  - (F^*)_\rho(\realopty; \realopty)\bigr)
    -\bigl(G_\gamma(\realoptx; \realoptx) + \iprod{y}{K \realoptx} -  (F^*)_\rho(y; \realopty)\bigr).
\end{equation}
Since the problem $\min_x \max_y~ G_\gamma(x; \realoptx)+\iprod{Kx}{y} - (F^*)_\rho(y; \realopty)$ has the solution $(\realoptx, \realopty)$, it is clear that $\gap_{\gamma,\rho}$ is non-negative, and zero at $(\realoptx, \realopty)$.
Before proving convergence we derive an explicit algorithm from \eqref{eq:pdps-inertia-structure}.

\subsection{Algorithm derivation}
\label{sec:pdps-derivation}

With the structure \eqref{eq:pdps-inertia-structure} fixed, we are ready to develop the skeleton of an explicit algorithm out of \eqref{eq:pp-inertia}.
Since \eqref{eq:pp-inertia} updates
\[
    (\this{\bar x},\this{\bar y}) = \this{\bar u} \defeq \thisu+ \Lambda_{i+1}(\inv\Lambda_i-I)(\thisu-\prevu),
\]
we have
\begin{align}
    \label{eq:pdps-inertial-variables}
    \this{\bar x} & = \thisx+\lambda_i(\inv\lambda_{i-1}-1)(\thisx-\prevx),
    \quad\text{and}
    &
    \this{\bar y} & = \thisy+\mu_{i+1}(\inv\mu_i-1)(\thisy-\prevy).
\end{align}
Using \eqref{eq:pdps-inertia-precondplus} and \eqref{eq:pdps-inertia-structure} we expand \eqref{eq:pp-inertia} as
\[
    \left\{
    \begin{aligned}
    0 & \in \tau_i \subdiff G(\nextx) + \tau_i K^*\nexty
        +(\nextx-\this{\bar x})
        -\inv\mu_{i+1}\tau_i K^*(\nexty-\this{\bar y})
        \\ \MoveEqLeft[-1]
        +\gamma\tau_i(\inv\lambda_i-1)(\nextx-\thisx)
        +\tau_i(\inv\mu_{i+1}-1)K^*(\nexty-\thisy),
        \\
    0 & \in \sigma_{i+1} \subdiff F^*(\nexty) - \sigma_{i+1} K\nextx
        -\inv\lambda_i\sigma_{i+1}\omega_i K(\nextx-\this{\bar x})
        + (\nexty-\this{\bar y})
        \\ \MoveEqLeft[-1]
        +\rho\sigma_{i+1}(\inv\mu_{i+1}-1)(\nexty-\thisy)
        -\sigma_{i+1}(\inv\lambda_i-1)K(\nextx-\thisx).
    \end{aligned}
    \right.
\]
The second line in both inclusions comes from the corrector term.
Collecting all instances of the same iterate together, this can be simplified as
\begin{equation}
    \label{eq:pdps-derivation-step-2}
    \left\{
    \begin{aligned}
    0 & \in \tau_i \subdiff G(\nextx)
        +[1+\gamma\tau_i(\inv\lambda_i-1)]\nextx
        -[\this{\bar x}+\gamma\tau_i(1-\lambda_i)\thisx]
        \\ \MoveEqLeft[-1]
        +\tau_i K^*[\inv\mu_{i+1}\this{\bar y}-(\inv\mu_{i+1}-1)\thisy],
        \\
    0 & \in \sigma_{i+1} \subdiff F^*(\nexty)
        +[1+\rho\sigma_{i+1}(\inv\mu_{i+1}-1)]\nexty
        -[\this{\bar y}+\rho\sigma_{i+1}(\inv\mu_{i+1}-1)\thisy]
        \\ \MoveEqLeft[-1]
        -\inv\lambda_i\sigma_{i+1}(1+\omega_i) K\nextx
        +\inv\lambda_i\sigma_{i+1}\omega_i K \this{\bar x}
        +\sigma_{i+1}(\inv\lambda_i-1)K\thisx.
    \end{aligned}
    \right.
\end{equation}
Using \eqref{eq:pdps-inertial-variables} we can write,
\[
    \begin{split}
    \this{\tilde y}
    & \defeq \inv\mu_{i+1}\this{\bar y}-(\inv\mu_{i+1}-1)\thisy
    = \inv\mu_{i+1}\thisy+(\inv\mu_i-1)(\thisy-\prevy)-(\inv\mu_{i+1}-1)\thisy
    \\
    &
    = \thisy + (\inv\mu_i-1)(\thisy-\prevy).
    \end{split}
\]
Similarly, defining
\[
    \nexxt{\tilde x} \defeq \nextx+(\inv\lambda_i-1)(\nextx-\thisx),
\]
we can write
\[
    \begin{split}
    \nextx_{\omega} & \defeq
        \inv\lambda_i(1+\omega_i)\nextx
        -\inv\lambda_i\omega_i \this{\bar x}
        -(\inv\lambda_i-1)\thisx
        \\
        & =
        [\inv\lambda_i\nextx-(\inv\lambda_i-1)\thisx]
        +\omega_i[\inv\lambda_i\nextx-\inv\lambda_i\this{\bar x}]
        \\
        & =
        \nexxt{\tilde x}
        +\omega_i[\inv\lambda_i\nextx-(\inv\lambda_i-1)\thisx+(\inv\lambda_i-1)\thisx-\inv\lambda_i\this{\bar x}]
        \\
        &
        =\nexxt{\tilde x} + \omega_i(\nexxt{\tilde x}-\this{\tilde x}).
    \end{split}
\]
Also introducing
\[
    \tilde\tau_i \defeq \tau_i/[1+\gamma\tau_i(\inv\lambda_i-1)]
    \quad\text{and}\quad
    \tilde\sigma_{i+1} \defeq \sigma_{i+1}/[1+\rho\sigma_{i+1}(\inv\mu_{i+1}-1)],
\]
we now rewrite \eqref{eq:pdps-derivation-step-2} as
\[
    \left\{
    \begin{aligned}
    0 & \in \tau_i \subdiff G(\nextx)
        +(\tau_i/\tilde\tau_i)\nextx
        -[\this{\bar x}+\gamma\tau_i(\inv\lambda_i-1)\thisx]
        +\tau_i K^* \this{\tilde y},
        \\
    0 & \in \sigma_{i+1} \subdiff F^*(\nexty)
        +(\sigma_{i+1}/\tilde\sigma_{i+1})\nexty
        -[\this{\bar y}+\rho\sigma_{i+1}(\inv\mu_{i+1}-1)\thisy]
        -\sigma_{i+1} K \nextx_\omega.
    \end{aligned}
    \right.
\]
Multiplying, respectively, by $\tilde\tau_i/\tau_i$ and $\tilde\sigma_{i+1}/\sigma_{i+1}$, and recalling \eqref{eq:pdps-inertial-variables}, we obtain the proximal updates of \cref{alg:pdps-inertial-explicit}, which we have written somewhat more compactly by additionally introducing the iterates $\this x_\gamma$ and $\this y_\rho$. The updates of $\nexxt{\bar x}$, $\nexxt{\tilde x}$, $\nexxt{\bar y}$, and $\nexxt{\tilde y}$ in the main step of the method are simply the definitions from above.
The step length parameters will still need to be determined from one of the theorems referenced in \cref{alg:pdps-inertial-explicit}.
Observe how the ``corrected'' inertial variables $\nexxt{\tilde x}$ and $\nexxt{\tilde y}$ differ from the standard inertial variables $\nexxt{\bar x}$ and $\nexxt{\bar y}$.

Before developing specific rules for the step lengths and inertial parameters, we still need to provide the estimate \eqref{eq:inertia-h-cond}. This process will produce additional conditions on the parameters.

\begin{Algorithm}
    \caption{Inertial, corrected, primal-dual proximal splitting (IC-PDPS)}
    \label{alg:pdps-inertial-explicit}
    \begin{algorithmic}[1]
        \Require
        On Hilbert spaces $X$ and $Y$, a linear operator $K \in \linear(X; Y)$ and convex, proper, and lower semicontinuous $G: X \to \extR$ and $F^*: Y \to \extR$ with factors $\gamma,\rho \ge 0$ of (strong) convexity.
        \State
        Determine step length and inertial parameters $\{(\tau_i,\sigma_{i+1}, \lambda_i, \mu_{i+1}, \omega_i)\}_{i \in \N}$ from a suitable one among \cref{thm:step-length-rules-gamma=0-rho=0,thm:step-length-rules-gamma>0-rho=0,thm:step-length-rules-gamma=0-rho>0,thm:step-length-rules-gamma>0-rho>0}.
        \State Pick initial iterates $\tilde x^0 \defeq \bar x^0 \defeq x^0 \in X$, and $\tilde y^0 \defeq \bar y^0 \defeq y^0 \in \Dom \subdiff F^*$.
        \State Let $i \defeq 0$.
        \Repeat
            \State Let{\abovedisplayskip=0ex
                \begin{equation*}
                \left\{
                \begin{aligned}
                \tilde\tau_i & \defeq \tau_i/[1+\gamma\tau_i(\inv\lambda_i-1)],
                \\
                \tilde\sigma_{i+1} & \defeq \sigma_{i+1}/[1+\rho\sigma_{i+1}(\inv\mu_{i+1}-1)].
                \end{aligned}
                \right.
            \end{equation*}}
            \State Compute
                \begin{equation*}
                    \left\{
                    \begin{aligned}
                        \this{x}_\gamma & \defeq [\this{\bar x}+\gamma\tau_i(\inv\lambda_i-1)\thisx]/[1+\gamma\tau_i(\inv\lambda_i-1)],
                        \\
                        \nextx & \defeq \prox_{\tilde\tau_i G}(
                            \this{x}_\gamma
                            -\tilde\tau_i K^*\this{\tilde y}
                            ),
                        \\
                        \nexxt{\bar x} & \defeq \nextx+\lambda_{i+1}(\inv\lambda_i-1)(\nextx-\thisx),
                        \\
                        \nexxt{\tilde x} & \defeq \nextx + (\inv\lambda_i-1)(\nextx-\thisx),
                        \\
                        \this{y}_\rho & \defeq [\this{\bar y}+\rho\sigma_{i+1}(\inv\mu_{i+1}-1)\thisy]/[1+\rho\sigma_{i+1}(\inv\mu_{i+1}-1)],
                        \\
                        \nexty & \defeq \prox_{\tilde\sigma_{i+1} F^*}(
                            \this{y}_\rho
                            +\sigma_{i+1} K[\nexxt{\tilde x} + \omega_i(\nexxt{\tilde x}-\this{\tilde x})]
                            ),
                        \\
                        \nexxt{\bar y} & \defeq \nexty+\mu_{i+2}(\inv\mu_{i+1}-1)(\nexty-\thisy),
                        \\
                        \nexxt{\tilde y} & \defeq \nexty + (\inv\mu_{i+1}-1)(\nexty-\thisy).
                    \end{aligned}
                    \right.
                \end{equation*}%
            \State Update $i \defeq i +1$
        \Until a stopping criterion is satisfied.
    \end{algorithmic}

Observe: If $\gamma=0$, then $\tilde\tau_i=\tau_i$ and $\thisx_\gamma=\this{\bar x}$. If $\rho=0$, then $\tilde\sigma_{i+1}=\sigma_{i+1}$ and $\thisy_\rho=\this{\bar y}$.

\end{Algorithm}

\subsection{Basic conditions}

We now verify the basic conditions of \cref{thm:general-inertia} and the positivity of $\Test_{i+1}\Precond_{i+1}$.

\begin{lemma}
    \label{lemma:pdps-inertia-metric-update}
    With the setup \eqref{eq:pdps-inertia-structure}, the condition \eqref{eq:inertia-metric-cond} holds if
    \begin{subequations}%
    \label{eq:pd-lemma-cond-metric}
    \begin{align}%
        \label{eq:pd-lemma-cond-metric-exp-primal}
        \lambda_i^2\tauTest_i(1+2\gamma\tau_i\inv\lambda_i) & \ge \lambda_{i+1}^2\tauTest_{i+1},
        \\
        \label{eq:pd-lemma-cond-metric-exp-dual}
        \mu_{i+1}^2\sigmaTest_{i+1}(1+2\rho\sigma_{i+1}\inv\mu_{i+1}) & \ge \mu_{i+2}^2\sigmaTest_{i+2},
        \\
        \label{eq:final-metric-cond}
        \lambda_i\tauTest_i\tau_i
        & =
        \mu_{i}\sigmaTest_{i}\sigma_{i}.
    \end{align}%
    \end{subequations}%
\end{lemma}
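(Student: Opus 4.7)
The plan is to compute both sides of the metric condition \eqref{eq:inertia-metric-cond} explicitly as block $2\times 2$ operators on $X\times Y$, then read off what is needed for positive semidefiniteness. Exploiting the fact that on a real Hilbert space the bilinear form $\iprod{Tu}{u}$ depends only on the symmetric part of $T$, anti-symmetric off-diagonal contributions of the form $\begin{pmatrix}0 & cK^*\\ -cK & 0\end{pmatrix}$ drop out when acting on $u=(x,y)$.

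First I would compute the left-hand side of \eqref{eq:inertia-metric-cond}. Direct multiplication gives
\begin{equation*}
\Precond_{i+1}\Lambda_{i+1}
=\begin{pmatrix} \lambda_i I & -\tau_i K^*\\ -\sigma_{i+1}\omega_i K & \mu_{i+1} I \end{pmatrix},
\end{equation*}
so that adding $2\Gamma_{i+1}$ and then multiplying on the left by the diagonal operator $\Lambda_{i+1}^*\Test_{i+1}=\diag(\lambda_i\tauTest_i I,\,\mu_{i+1}\sigmaTest_{i+1} I)$ yields, after collecting the $(1,1)$ and $(2,2)$ entries,
\begin{equation*}
\Lambda_{i+1}^*\Test_{i+1}(\Precond_{i+1}\Lambda_{i+1}+2\Gamma_{i+1})
=\begin{pmatrix} \lambda_i^2\tauTest_i(1+2\gamma\tau_i\inv\lambda_i) I & \lambda_i\tauTest_i\tau_i K^*\\ -\mu_{i+1}\sigmaTest_{i+1}\sigma_{i+1}(\omega_i+2)K & \mu_{i+1}^2\sigmaTest_{i+1}(1+2\rho\sigma_{i+1}\inv\mu_{i+1}) I \end{pmatrix}.
\end{equation*}
The right-hand side follows the same computation at shifted index:
\begin{equation*}
\Lambda_{i+2}^*\Test_{i+2}\Precond_{i+2}\Lambda_{i+2}
=\begin{pmatrix} \lambda_{i+1}^2\tauTest_{i+1} I & -\lambda_{i+1}\tauTest_{i+1}\tau_{i+1}K^*\\ -\mu_{i+2}\sigmaTest_{i+2}\sigma_{i+2}\omega_{i+1} K & \mu_{i+2}^2\sigmaTest_{i+2} I \end{pmatrix}.
\end{equation*}

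Next I would form the difference LHS$-$RHS. The diagonal blocks are precisely $[\lambda_i^2\tauTest_i(1+2\gamma\tau_i\inv\lambda_i)-\lambda_{i+1}^2\tauTest_{i+1}]I$ and $[\mu_{i+1}^2\sigmaTest_{i+1}(1+2\rho\sigma_{i+1}\inv\mu_{i+1})-\mu_{i+2}^2\sigmaTest_{i+2}]I$, which are non-negative exactly under \eqref{eq:pd-lemma-cond-metric-exp-primal} and \eqref{eq:pd-lemma-cond-metric-exp-dual}. For the off-diagonal blocks, using \eqref{eq:final-metric-cond} together with the definition $\omega_i=\lambda_i\tauTest_i\tau_i/(\lambda_{i+1}\tauTest_{i+1}\tau_{i+1})$, one substitutes $\mu_{j+1}\sigmaTest_{j+1}\sigma_{j+1}=\lambda_{j+1}\tauTest_{j+1}\tau_{j+1}$ and $\omega_j\mu_{j+1}\sigmaTest_{j+1}\sigma_{j+1}=\lambda_j\tauTest_j\tau_j$ (for $j=i,i+1$) to reduce the $(1,2)$ and $(2,1)$ blocks of the difference to $cK^*$ and $-cK$ respectively, with the common scalar $c=\lambda_i\tauTest_i\tau_i+\lambda_{i+1}\tauTest_{i+1}\tau_{i+1}$.

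Finally, I would evaluate the quadratic form $\iprod{(\mathrm{LHS}-\mathrm{RHS})u}{u}$ at an arbitrary $u=(x,y)\in X\times Y$. The off-diagonal contribution is $c\iprod{K^*y}{x}-c\iprod{Kx}{y}=0$ by the adjoint identity in a real Hilbert space, so only the diagonal part survives, giving $D_1\norm{x}^2+D_2\norm{y}^2\ge 0$ under \eqref{eq:pd-lemma-cond-metric-exp-primal}--\eqref{eq:pd-lemma-cond-metric-exp-dual}. This establishes \eqref{eq:inertia-metric-cond}. The main obstacle is the bookkeeping of the off-diagonal coefficients: one must verify that, with the chosen $\omega_i$ and constraint \eqref{eq:final-metric-cond}, the coefficient of $K^*$ in the $(1,2)$-block exactly matches the coefficient of $K$ in the $(2,1)$-block so that an anti-symmetric splitting is available; the rest of the argument is mechanical.
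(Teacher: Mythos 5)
Your proposal is correct and follows essentially the same route as the paper: both expand $\Lambda_{i+1}^*\Test_{i+1}(\Precond_{i+1}\Lambda_{i+1}+2\Gamma_{i+1})$ and $\Lambda_{i+2}^*\Test_{i+2}\Precond_{i+2}\Lambda_{i+2}$ as block operators, use \eqref{eq:final-metric-cond} and the definition of $\omega_i$, $\omega_{i+1}$ to make the off-diagonal part of the difference anti-symmetric (so it vanishes in the quadratic form), and reduce positivity to the diagonal conditions \eqref{eq:pd-lemma-cond-metric-exp-primal}--\eqref{eq:pd-lemma-cond-metric-exp-dual}. The paper merely states the cancellation more tersely; your identification of the common scalar $c=\lambda_i\tauTest_i\tau_i+\lambda_{i+1}\tauTest_{i+1}\tau_{i+1}$ is the correct bookkeeping behind that step.
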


\begin{proof}
    Inserting the operators from \eqref{eq:pdps-inertia-structure}, the condition \eqref{eq:inertia-metric-cond} reads
    {\small\[
        \begin{pmatrix}
            \tauTest_i\lambda_i(\lambda_i+2\gamma\tau_i) I & \tauTest_i\lambda_i\tau_i K^* \\
            -\sigmaTest_{i+1}\mu_{i+1}\sigma_{i+1}(2+\omega_i) K & \sigmaTest_{i+1}\mu_{i+1}(\mu_{i+1}+2\rho\sigma_{i+1}) I
        \end{pmatrix}
        \ge
        \begin{pmatrix}
            \tauTest_{i+1}\lambda_{i+1}^2 I & -\tauTest_{i+1}\lambda_{i+1}\tau_{i+1} K^* \\
            -\sigmaTest_{i+2}\mu_{i+2}\sigma_{i+2}\omega_{i+1} K & \sigmaTest_{i+2}\mu_{i+2}^2 I
        \end{pmatrix}.
    \]}
    Further inserting $\omega_i$ and $\omega_{i+1}$ from \eqref{eq:pdps-inertia-precond}, and using \eqref{eq:final-metric-cond}, the off-diagonal components cancel out.
    The diagonal components that are left are simply  \eqref{eq:pd-lemma-cond-metric-exp-primal} and \eqref{eq:pd-lemma-cond-metric-exp-dual}.
\end{proof}

\begin{lemma}
    \label{lemma:pdps-zm-bound}
    Let $i \in \N$. If \eqref{eq:pdps-inertia-structure} and \eqref{eq:pd-lemma-cond-metric} hold, then  $\Test_{i+1}\Precond_{i+1}$ is self-adjoint. If, moreover,
    \begin{equation}
        \label{eq:pdps-inertia-zm-bound-cond}
        (1-\kappa)\mu_{i+1}^2\sigmaTest_{i+1} \ge \tauTest_i\tau_i^2 \norm{K}^2
        \quad\text{for some}\quad
        \kappa \in [0, 1),
    \end{equation}
    then $\Test_{i+1}\Precond_{i+1}$ is positive definite, more precisely
    \begin{equation}
        \label{eq:pdps-inertia-zm-bound}
        \Test_{i+1}\Precond_{i+1}
        \ge
        \delta \Test_{i+1}
        \quad\text{for}\quad
        \delta \defeq 1- \sqrt{1-\kappa}.
    \end{equation}
\end{lemma}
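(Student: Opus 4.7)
The plan is to carry out a direct block-matrix computation. First I would write out
\[
    \Test_{i+1}\Precond_{i+1}
    =
    \begin{pmatrix}
        \tauTest_i I & -\tauTest_i\inv\mu_{i+1}\tau_i K^* \\
        -\sigmaTest_{i+1}\inv\lambda_i\sigma_{i+1}\omega_i K & \sigmaTest_{i+1} I
    \end{pmatrix}.
\]
The diagonal blocks are self-adjoint, so self-adjointness reduces to matching the off-diagonal blocks. Substituting the definition $\omega_i=\lambda_i\tauTest_i\tau_i/(\lambda_{i+1}\tauTest_{i+1}\tau_{i+1})$ and applying \eqref{eq:final-metric-cond} at index $i+1$ to replace $\lambda_{i+1}\tauTest_{i+1}\tau_{i+1}$ with $\mu_{i+1}\sigmaTest_{i+1}\sigma_{i+1}$, the bottom-left coefficient collapses to $\tauTest_i\tau_i/\mu_{i+1}$, which is exactly the coefficient of $K^*$ in the top-right block. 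Hence $\Test_{i+1}\Precond_{i+1}$ is self-adjoint.

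Next, for the positivity bound, I would subtract $\delta\Test_{i+1}$ and test against an arbitrary $u=(x,y)\in\Space$. Exploiting the self-adjointness just established, this yields
\[
    \iprod{(\Test_{i+1}\Precond_{i+1}-\delta\Test_{i+1})u}{u}
    =
    (1-\delta)\tauTest_i\norm{x}^2
    -2\tauTest_i\inv\mu_{i+1}\tau_i\iprod{K^*y}{x}
    +(1-\delta)\sigmaTest_{i+1}\norm{y}^2.
\]
Then Cauchy--Schwarz gives $|\iprod{K^*y}{x}|\le \norm{K}\norm{x}\norm{y}$, and Young's inequality $2ab\le a^2+b^2$ with the weights $a=\sqrt{(1-\delta)\tauTest_i}\,\norm{x}$ and $b=\sqrt{(1-\delta)\sigmaTest_{i+1}}\,\norm{y}$ absorbs the cross term into the two diagonal terms, provided
\[
    \tauTest_i\inv\mu_{i+1}\tau_i\norm{K}
    \le
    \sqrt{(1-\delta)^2\tauTest_i\sigmaTest_{i+1}}.
\]
Squaring and rearranging, this is the same as $(1-\delta)^2\mu_{i+1}^2\sigmaTest_{i+1}\ge \tauTest_i\tau_i^2\norm{K}^2$. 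Choosing $\delta=1-\sqrt{1-\kappa}$ we have $(1-\delta)^2=1-\kappa$, so the condition becomes \eqref{eq:pdps-inertia-zm-bound-cond}, which holds by assumption. This establishes \eqref{eq:pdps-inertia-zm-bound}.

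There is no real obstacle here; the only subtle point is the index in \eqref{eq:final-metric-cond} used for self-adjointness (we apply it at $i+1$, not $i$, to eliminate the $\omega_i$ denominator), and the precise choice of $\delta$ making the Young-inequality tight against \eqref{eq:pdps-inertia-zm-bound-cond}.
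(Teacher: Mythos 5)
Your proposal is correct and follows essentially the same route as the paper: compute the block form of $\Test_{i+1}\Precond_{i+1}$, use \eqref{eq:final-metric-cond} at index $i+1$ to match the off-diagonal blocks (which the paper does implicitly when asserting the symmetric matrix form), and absorb the cross term by a weighted Cauchy/Young inequality, with $\delta=1-\sqrt{1-\kappa}$ making the estimate close against \eqref{eq:pdps-inertia-zm-bound-cond}. The only cosmetic difference is that you split the cross term symmetrically with weight $1-\delta$ on both diagonal blocks, whereas the paper keeps $\delta\tauTest_i$ on the primal block, pushes the whole remainder $\inv{(1-\delta)}\mu_{i+1}^{-2}\tauTest_i\tau_i^2KK^*$ into the dual block, and then solves $\delta=(\kappa-\delta)/(1-\delta)$ — both yield the same $\delta$.
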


\begin{proof}
    For now, take arbitrary $\delta \in [0, \kappa]$.
    From \eqref{eq:pdps-inertia-precond}, using Cauchy's inequality
    \[
        \Test_{i+1}\Precond_{i+1}
        =
        \begin{pmatrix} \tauTest_i I
        & -\inv\mu_{i+1}\tauTest_i\tau_i K^* \\
        -\inv\mu_{i+1}\tauTest_i\tau_i K
        & \sigmaTest_{i+1}I
        \end{pmatrix}
        \ge
        \begin{pmatrix} \delta \tauTest_i I
        & 0 \\
        0
        & \sigmaTest_{i+1}I - \inv{(1-\delta)}\mu_{i+1}^{-2}\tauTest_i\tau_i^2 KK^*
        \end{pmatrix}.
    \]
    Clearly then $\Test_{i+1}\Precond_{i+1}$ is self-adjoint.
    Using \eqref{eq:pdps-inertia-zm-bound-cond}, we have
    \[
        \sigmaTest_{i+1}I - \inv{(1-\delta)}\mu_{i+1}^{-2}\tauTest_i\tau_i^2 KK^*
        \ge
        \sigmaTest_{i+1}I - \inv{(1-\delta)}(1-\kappa)\sigmaTest_{i+1} I
        = (\kappa-\delta)(1-\delta)^{-1}\sigmaTest_{i+1} I.
    \]
    To make a specific choice of $\delta$, we equate $\delta=(\kappa-\delta)(1-\delta)^{-1}$. This gives the quadratic equation $2\delta-\delta^2-\kappa=0$ with the solution $\delta=1- \sqrt{1-\kappa}$. The rest is trivial.
\end{proof}

\subsection{Gap unrolling and alignment}

We now derive a basic convergence estimate using \cref{thm:general-inertia}.
This involves verifying \eqref{eq:inertia-h-cond} for some $\mathcal{V}_{i+1}(\realoptu)$, and estimating the sum of the latter from below to yield a useful gap estimate.
For the statement of the next lemma, we recall the definition of the strong-convexity adjusted functions $G_\gamma$ and $(F^*)_\rho$ from \cref{sec:inertia-sc}, and the corresponding gap functional defined in \eqref{eq:gap-sc}.

\begin{lemma}
    \label{lemma:pdps-gap-unrolling}
    Suppose \eqref{eq:pdps-inertia-structure} and \eqref{eq:pd-lemma-cond-metric} hold. Take
    \begin{subequations}%
    \label{eq:pdps-inertial-gap-conds}
    \begin{align}%
        \label{eq:pdps-inertial-cond-primal}
        (1-\lambda_{i+1})\tauTest_{i+1}\tau_{i+1} & \le \tauTest_i\tau_i,
        && \lambda_0 =1, \\
        \label{eq:pdps-inertial-cond-dual}
        (1-\mu_{i+1})\sigmaTest_{i+1}\sigma_{i+1} & \le \sigmaTest_i\sigma_i \le \sigmaTest_{i+1}\sigma_{i+1},
        && \mu_0 =1,
        \quad\text{and}\\
        \label{eq:pdps-gap-alignment-cond}
        \tauTest_{i}\tau_{i} & =\sigmaTest_i\sigma_i
        && (i \in \N).
    \end{align}%
    \end{subequations}%
    Then for any $\realoptu=(\realoptx, \realopty) \in \inv H(0)$, the iterates generated by \cref{alg:pdps-inertial-explicit} satisfy
    \begin{equation}
        \label{eq:pdps-general-estimate}
        \frac{1}{2}\norm{z^N-\realoptu}^2_{\Lambda_{N+1}^*\Test_{N+1}\Precond_{N+1}\Lambda_{N+1}}
        +
        \tauTest_{N-1}\tau_{N-1}\gap_{\gamma,\rho}(u^N; \realoptu)
        \le
        C_0(\realoptu)
        \quad
        (N \ge 1),
    \end{equation}
    where for any $w^0 \in \subdiff (F^*)_\rho(y^0)$ we set
    \begin{equation*}
        C_0(\realoptu) \defeq \frac{1}{2}\norm{z^0-\realoptu}^2_{\Lambda_{1}^*\Test_{1}\Precond_{1}\Lambda_{1}}
        +\sigmaTest_{0}\sigma_{0}\iprod{w^0-K\realoptx}{y^0-\realopty}.
    \end{equation*}
\end{lemma}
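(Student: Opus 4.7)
The plan is to invoke \cref{thm:general-inertia} with the operator set-up \eqref{eq:pdps-inertia-structure} and then unroll the resulting sum $\sum \mathcal{V}_{i+1}(\realoptu)$ into the gap. The metric condition \eqref{eq:inertia-metric-cond} and self-adjointness of $\Test_{i+1}\Precond_{i+1}$ are already provided by \cref{lemma:pdps-inertia-metric-update,lemma:pdps-zm-bound} under the assumed \eqref{eq:pd-lemma-cond-metric}, so the real work is to choose a useful $\mathcal{V}_{i+1}(\realoptu)$ in \eqref{eq:inertia-h-cond} and bound its sum from below.

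For the choice of $\mathcal{V}_{i+1}(\realoptu)$, I would insert the block forms of $\tilde H_{i+1}$, $\Gamma_{i+1}$, and $\Lambda_{i+1}^*\Test_{i+1}$ into the left-hand side of \eqref{eq:inertia-h-cond}. The anti-symmetric $K^*\nexty$ and $-K\nextx$ components of $\tilde H_{i+1}(\nextu)$ exactly cancel the off-diagonal $K$ blocks of $\Gamma_{i+1}(\nextu-\realoptu)$, so the left-hand side collapses to
\[
    \lambda_i \tauTest_i \tau_i \iprod{\hat q^{i+1} + K^* \realopty}{\nexxt{\zeta} - \realoptx}
    + \mu_{i+1} \sigmaTest_{i+1} \sigma_{i+1} \iprod{\hat r^{i+1} - K \realoptx}{\nexxt{\eta} - \realopty},
\]
for $\hat q^{i+1} \in \subdiff G_\gamma(\nextx;\realoptx)$ and $\hat r^{i+1} \in \subdiff(F^*)_\rho(\nexty;\realopty)$ the $\gamma$- and $\rho$-corrected subgradients arising from the inclusion of \eqref{eq:pp-inertia}. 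Taking $\mathcal{V}_{i+1}(\realoptu)$ equal to this expression makes \eqref{eq:inertia-h-cond} hold with equality, since no forward-step error is present.

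To convert $\sum \mathcal{V}_{i+1}(\realoptu)$ into the gap I would apply \cref{lemma:h-sum-function} twice. Once on the primal side, to the convex function $\tilde G(x) \defeq G_\gamma(x;\realoptx) + \iprod{K^*\realopty}{x}$, which is minimised at $\realoptx$ by $\realoptu \in \inv H(0)$; the recurrence \eqref{eq:pdps-inertial-cond-primal} together with $\lambda_0 = 1$ kills the boundary term and yields the lower bound $\tauTest_{N-1}\tau_{N-1}[\tilde G(x^N) - \tilde G(\realoptx)]$. Once on the dual side, to $\tilde F(y) \defeq (F^*)_\rho(y;\realopty) - \iprod{K\realoptx}{y}$, minimised at $\realopty$, applying the lemma with the parameters $(\sigmaTest_{i+1},\sigma_{i+1},\mu_{i+1})$ in the role of $(\tauTest_i,\tau_i,\lambda_i)$ and using \eqref{eq:pdps-inertial-cond-dual}; this produces $\sigmaTest_N\sigma_N[\tilde F(y^N)-\tilde F(\realopty)] - \sigmaTest_1\sigma_1(1-\mu_1)[\tilde F(y^0)-\tilde F(\realopty)]$.

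The main obstacle is the residual dual boundary term, which does \emph{not} vanish: $\mu_0 = 1$ is merely a consistency convention for \eqref{eq:pdps-inertial-variables}, not a mechanism for killing the $i=0$ boundary, as $\mu_1 \ne 1$ in general. I would dispose of it using the initial dual subgradient $w^0 \in \subdiff(F^*)_\rho(y^0;\realopty)$ given in the statement: because $\realopty$ minimises $\tilde F$, the subgradient inequality gives $0 \le \tilde F(y^0) - \tilde F(\realopty) \le \iprod{w^0 - K\realoptx}{y^0-\realopty}$, with both sides non-negative. Multiplying by $\sigmaTest_1\sigma_1(1-\mu_1) \ge 0$ and invoking \eqref{eq:pdps-inertial-cond-dual} at $i = 0$ controls the boundary term by $\sigmaTest_0\sigma_0\iprod{w^0-K\realoptx}{y^0-\realopty}$. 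Finally, the alignment \eqref{eq:pdps-gap-alignment-cond} together with the monotonicity half of \eqref{eq:pdps-inertial-cond-dual} gives $\tauTest_{N-1}\tau_{N-1} = \sigmaTest_{N-1}\sigma_{N-1} \le \sigmaTest_N\sigma_N$; combined with $\tilde F(y^N) - \tilde F(\realopty) \ge 0$ this lets me downgrade the dual prefactor from $\sigmaTest_N\sigma_N$ to $\tauTest_{N-1}\tau_{N-1}$, whereupon the primal and dual contributions recombine, after expanding the bilinear terms, into $\tauTest_{N-1}\tau_{N-1}\gap_{\gamma,\rho}(u^N;\realoptu)$. Substituting the resulting lower bound on $\sum\mathcal{V}_{i+1}(\realoptu)$ into the conclusion of \cref{thm:general-inertia} and rearranging delivers \eqref{eq:pdps-general-estimate} with the stated $C_0(\realoptu)$.
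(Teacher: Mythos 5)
Your proposal is correct and follows essentially the same route as the paper: invoke \cref{thm:general-inertia} with the block structure \eqref{eq:pdps-inertia-structure}, cancel the anti-symmetric parts against $\Gamma_{i+1}$, and unroll the primal and dual sums via \cref{lemma:h-sum-function} applied to $G_\gamma(\freevar;\realoptx)+\iprod{K^*\realopty}{\freevar}$ and $(F^*)_\rho(\freevar;\realopty)-\iprod{K\realoptx}{\freevar}$, then realign prefactors using \eqref{eq:pdps-gap-alignment-cond} and the monotonicity in \eqref{eq:pdps-inertial-cond-dual}. The only (equally valid) deviation is in handling the dual boundary: the paper shifts the dual sum to start at $i=0$ by adding and subtracting the term $c_0=\sigmaTest_0\sigma_0\mu_0\iprod{\bar w^0}{\eta^0-\realopty}$ so that $\mu_0=1$ kills the unrolling boundary, whereas you keep the sum as is and dominate the resulting $\sigmaTest_1\sigma_1(1-\mu_1)$ boundary term by the subgradient inequality at $y^0$ together with \eqref{eq:pdps-inertial-cond-dual} at $i=0$, arriving at the same constant $C_0(\realoptu)$.
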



\begin{proof}
    Observe that \cref{alg:pdps-inertial-explicit} explicitly requires $y^0 \in \Dom \subdiff F^*$, so some $w^0 \in \subdiff (F^*)_\rho(y^0; \realopty)$ exists. The proximal steps moreover ensure $\nexty \in \Dom \subdiff F^*$ and $\nextx \in \Dom \subdiff G$ for all $i \in \N$.

    By the defining \eqref{eq:inertia-z}, the auxiliary sequence $\{\thisz \defeq (\this{\zeta}, \this{\eta})\}_{i \in \N} \subset X \times Y$ satisfies
    \begin{equation}
        \label{eq:inertia-z-pdps}%
        \lambda_i\nexxt{\zeta}=\nextx-(1-\lambda_i)\thisx
        \quad\text{and}\quad
        \mu_{i+1}\nexxt{\eta}=\nexty-(1-\mu_{i+1})\thisy
        \quad (i \in \N).
    \end{equation}
    Since $\mu_0=1$ and $\eta^0=y^0$, the latter also works for $i=-1$ and any, superfluous, $y^{-1} \in Y$.
    We observe that
    \[
        \tilde H_{i+1}(\nextu)-\Gamma_{i+1}(\nextu-\realoptu)
        =
        \begin{pmatrix}
            \tau_i[\subdiff G(\nextx) - \gamma(\nextx-\realoptx)
            +K^*\realopty]
            \\
            \sigma_{i+1}[\subdiff F^*(\nexty) - \rho(\nexty-\realopty) - K\realoptx]
        \end{pmatrix}.
    \]
    Let us define (recall \cref{sec:inertia-sc})
    \begin{subequations}
    \begin{align}
        \label{eq:pdps-bar-g}
        \bar G_\gamma(x; \realoptu) & \defeq G(x) - \frac{\gamma}{2}\norm{x-\realoptx}^2 + \iprod{K^*\realopty}{x-\realoptx}
        =G_\gamma(x; \realoptx) + \iprod{K^*\realopty}{x-\realoptx},
        \ \text{and}
        \\
        \label{eq:pdps-bar-f}
        (\bar F^*)_\rho(y; \realoptu) & \defeq F^*(y) - \frac{\rho}{2}\norm{y-\realopty}^2 - \iprod{K\realoptx}{y-\realopty}
        =(F^*)_\rho(y; \realopty) - \iprod{K\realoptx}{y-\realopty}.
    \end{align}
    \end{subequations}
    Then $\bar G_\gamma$ and $(\bar F^*)_\rho$ are convex with $\bar G_\gamma(x; \realoptu) \ge \bar G_\gamma(\realoptx; \realoptu)$, and $(\bar F^*)_\rho(y; \realoptu) \ge (\bar F^*)_\rho(\realopty; \realoptu)$ for all $x \in X$ and $y \in Y$.
    Moreover, \eqref{eq:inertia-h-cond} holds with
    \begin{equation*}
        \begin{split}
        \mathcal{V}_{i+1}(\realoptu)
        &
        \defeq
        \inf~
        \iprod{\tilde H_{i+1}(\nextu) -\Gamma_{i+1}(\nextu-\realoptu)}{\nextz-\realoptu}_{\Lambda^*_{i+1}\Test_{i+1}}
        \\
        &
        =
        \inf~\bigl[
        \tauTest_i\tau_i\lambda_i \iprod{\subdiff \bar G_\gamma(\nextx; \realoptu)}{\nexxt{\zeta}-\realoptx}
        +
        \sigmaTest_{i+1}\sigma_{i+1}\mu_{i+1} \iprod{\subdiff (\bar F^*)_\rho(\nexty;\realoptu)}{\nexxt{\eta}-\realopty}
        \bigr].
        \end{split}
    \end{equation*}
    For each $i \in \N$, let  $\nexxt{\bar q} \in \subdiff \bar G_\gamma(\nextx; \realoptu)$, and let $\this{w} \in Y$ be such that $\this{\bar w} = \this{w} - K\realoptx \in \subdiff (\bar F^*)_\rho(\thisy; \realoptu)$.
    Define
    \begin{equation}
        \label{eq:pdps-inertia-gap-part-1}
        s_N \defeq s_N^G + s_N^{F^*}
        \defeq
        \sum_{i=0}^{N-1}
        \tauTest_i\tau_i\lambda_i \iprod{\nexxt{\bar q}}{\nexxt{\zeta}-\realoptx}
        +
        \sum_{i=0}^{N-1}
        \sigmaTest_{i+1}\sigma_{i+1}\mu_{i+1} \iprod{\nexxt{\bar w}}{\nexxt{\eta}-\realopty}.
    \end{equation}
    Since we have assumed \eqref{eq:pdps-inertia-structure} and \eqref{eq:pd-lemma-cond-metric}, we may use \cref{lemma:pdps-inertia-metric-update,lemma:pdps-zm-bound} to verify \eqref{eq:inertia-metric-cond} and the self-adjointness of $\Test_{i+1}\Precond_{i+1}$.
    We may therefore use \cref{thm:general-inertia} to establish \eqref{eq:pdps-general-estimate} if we further show that
    \begin{equation}
        \label{eq:pdps-inertia-gap-part-goal}
        s_N \ge
        \tauTest_{N-1}\tau_{N-1}\gap_{\gamma,\rho}(u^N; \realoptu) - c_0
    \end{equation}
    for
    \[
        c_0 \defeq 
        \sigmaTest_{0}\sigma_{0}\mu_{0} \iprod{\bar w^0}{\eta^0-\realopty}
        =
        \sigmaTest_{0}\sigma_{0}\iprod{w^0-K\realoptx}{y^0-\realopty}.
    \]    
    Indeed, this establishes the right hand side as a lower bound on $\sum_{i=0}^{N-1} \mathcal{V}_{i+1}(\realoptu)$.

    The difficulty in working with $s_N$ is that unless $\gamma=\rho=0$, our algorithm will give $\tauTest_i\tau_i=\sigmaTest_i\sigma_i$, not $\tauTest_i\tau_i=\sigmaTest_{i+1}\sigma_{i+1}$. We therefore have to realign variables.
    Using the assumption $\lambda_0=1$, \eqref{eq:pdps-inertial-cond-primal}, and \eqref{eq:inertia-z-pdps}, by \cref{lemma:h-sum-function} and \eqref{eq:pdps-bar-g} we have
    \begin{equation}
        \label{eq:pdps-inertia-gap-part-2}
        \begin{split}
        s_N^G
        &
        \ge \tauTest_{N-1}\tau_{N-1}[\bar G_\gamma(x^N; \realoptu)-\bar G_\gamma(\realoptx; \realoptu)]
        \\
        &
        =
        \tauTest_{N-1}\tau_{N-1}[G_\gamma(x^N; \realoptx)-G_\gamma(\realoptx; \realoptx)]
        +\tauTest_{N-1}\tau_{N-1}\iprod{K^*\realopty}{x^N-\realoptx}.
        \end{split}
    \end{equation}
    Regarding $s_N^{F^*}$, by the second inequality of \eqref{eq:pdps-inertial-cond-dual} we have $\sigmaTest_N\sigma_N \ge \sigmaTest_{N-1}\sigma_{N-1}$, Moreover, $\realopty$ minimises $(\bar F^*)_\rho(\freevar; \realoptu)$.
    Using these two facts after an application analogous to  \eqref{eq:pdps-inertia-gap-part-2} of \cref{lemma:h-sum-function}, we get
    \begin{equation}
        \label{eq:pdps-inertia-gap-part-3}
        \begin{split}
        s_N^{F^*} 
        &
        =
        \sum_{i=0}^N
        \sigmaTest_{i}\sigma_{i}\mu_{i} \iprod{\this{\bar w}}{\this{\eta}-\realopty} - c_0
        \\
        &        
        \ge \sigmaTest_N\sigma_N[(\bar F^*)_\rho(y^N; \realoptu)-(\bar F^*)_\rho(\realopty; \realoptu)] - c_0
        \\
        &
        \ge \sigmaTest_{N-1}\sigma_{N-1}[(\bar F^*)_\rho(y^N; \realoptu)-(\bar F^*)_\rho(\realopty; \realoptu)] - c_0
        \\
        &
        =  \sigmaTest_{N-1}\sigma_{N-1}[(F^*)_\rho(y^N; \realopty)-(F^*)_\rho(\realopty; \realopty)] -  \sigmaTest_{N-1}\sigma_{N-1}\iprod{K\realoptx}{y^N-\realopty} - c_0.
        \end{split}
    \end{equation}
    Combining \eqref{eq:pdps-inertia-gap-part-1}, \eqref{eq:pdps-inertia-gap-part-2}, and \eqref{eq:pdps-inertia-gap-part-3}, thus
    \[
        \begin{split}
        s_N
        &
        \ge
        \tauTest_{N-1}\tau_{N-1}[G_\gamma(x^N; \realoptx)-G_\gamma(\realoptx; \realoptx)]
        +\sigmaTest_{N-1}\sigma_{N-1}((F^*)_\rho(y^N; \realopty)-(F^*)_\rho(\realopty;\realopty))
        \\ \MoveEqLeft[-1]
        +\tauTest_{N-1}\tau_{N-1}\iprod{K^*\realopty}{x^N-\realoptx}
        -\sigmaTest_{N-1}\sigma_{N-1}\iprod{K\realoptx}{y^N-\realopty}
        - c_0.
        \end{split}
    \]
    Now \eqref{eq:pdps-gap-alignment-cond} establishes \eqref{eq:pdps-inertia-gap-part-goal}.
\end{proof}

\subsection{Step length and inertial parameter rules}
\label{sec:pdps-step-lengths}

We now consider several cases of the factors of (strong) convexity $\rho$ and $\gamma$ being zero or positive.
Throughout, as in the proof of \eqref{lemma:pdps-gap-unrolling}, we write $\thisz=(\this{\zeta}, \this{\eta})$, for the auxiliary sequence $\{\thisz\}_{i \in \N}$ defined in \eqref{eq:inertia-z}.
We first summarise the various lemmas and their conditions from above.

\begin{lemma}
    \label{lemma:pdps-final-general}
    With $\lambda_0 = 1$ and $\tau_0,\sigma_0,\tauTest_0,\sigmaTest_0>0$, suppose that $\mu_i=\lambda_i$ as well as
    \begin{subequations}%
    \label{eq:pdps-final-general-conds}
    \begin{align}
        \label{eq:pdps-final-general-conds-1}
        \sigmaTest_i\sigma_i &= \tauTest_{i}\tau_{i},
        & \omega_i\lambda_{i+1}\tauTest_{i+1}\tau_{i+1} & = \lambda_i\tauTest_i\tau_i.
        \\
        \label{eq:pdps-final-general-conds-2}
        \lambda_i^2\tauTest_i(1+2\gamma\tau_i\inv\lambda_i) & \ge \lambda_{i+1}^2\tauTest_{i+1},
        &
        (1-\lambda_{i+1})\tauTest_{i+1}\tau_{i+1} & \le \tauTest_i\tau_i \le \tauTest_{i+1}\tau_{i+1},
        \\
        \label{eq:pdps-final-general-conds-3}
        \lambda_{i}^2\sigmaTest_{i}(1+2\rho\sigma_{i}\inv\lambda_{i}) & \ge \lambda_{i+1}^2\sigmaTest_{i+1},
        &
        (1-\kappa)\lambda_{i+1}^2\sigmaTest_{i+1} & \ge \tauTest_i\tau_i^2 \norm{K}^2
        \qquad(i \in \N)
    \end{align}%
    \end{subequations}%
    for some $\kappa \in [0, 1)$.
    Then the iterates generated by \cref{alg:pdps-inertial-explicit} and the auxiliary sequence generated by \eqref{eq:inertia-z} satisfy with $\delta \defeq 1-\sqrt{1-\kappa}$ for any $N \ge 1$ and any $\realoptu \in \inv H(0)$ the estimate
    \begin{equation}
        \label{eq:pdps-general-estimate-simplified}
        \frac{\delta\tauTest_N\lambda_N^2}{2}\norm{\zeta^N-\realoptx}^2
        +\frac{\delta\sigmaTest_{N+1}\lambda_{N+1}^2}{2}\norm{\eta^N-\realopty}^2
        +
        \tauTest_{N-1}\tau_{N-1}\gap_{\gamma,\rho}(u^N; \realoptu)
        \le
        C_0(\realoptu).
    \end{equation}
\end{lemma}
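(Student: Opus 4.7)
The plan is to reduce the statement to \cref{lemma:pdps-gap-unrolling,lemma:pdps-zm-bound} by showing that the compact assumptions \eqref{eq:pdps-final-general-conds} together with $\mu_i=\lambda_i$ entail all of their hypotheses, and then to simplify the resulting quadratic form at index $N+1$ using the block-diagonal structure of $\Lambda_{N+1}^*\Test_{N+1}\Lambda_{N+1}$.

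First I would verify the hypotheses of \cref{lemma:pdps-gap-unrolling}. Substituting $\mu_i=\lambda_i$, the dual metric condition \eqref{eq:pd-lemma-cond-metric-exp-dual} is just the reindexing $i\mapsto i+1$ of the first inequality of \eqref{eq:pdps-final-general-conds-3}, while the primal metric condition \eqref{eq:pd-lemma-cond-metric-exp-primal} is the first inequality of \eqref{eq:pdps-final-general-conds-2}. The alignment \eqref{eq:final-metric-cond} reduces, via $\mu_i=\lambda_i$, to $\tauTest_i\tau_i = \sigmaTest_i\sigma_i$, which is supplied by \eqref{eq:pdps-final-general-conds-1}. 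Using this same equality, the dual gap-unrolling condition \eqref{eq:pdps-inertial-cond-dual} becomes $(1-\lambda_{i+1})\tauTest_{i+1}\tau_{i+1}\le \tauTest_i\tau_i \le \tauTest_{i+1}\tau_{i+1}$, which is exactly the second line of \eqref{eq:pdps-final-general-conds-2}; the primal counterpart \eqref{eq:pdps-inertial-cond-primal} is the left half of that same line, and the alignment \eqref{eq:pdps-gap-alignment-cond} has already been checked. With the base values $\lambda_0=\mu_0=1$ assumed, \cref{lemma:pdps-gap-unrolling} yields
\begin{equation*}
    \frac{1}{2}\norm{z^N-\realoptu}^2_{\Lambda_{N+1}^*\Test_{N+1}\Precond_{N+1}\Lambda_{N+1}}
    + \tauTest_{N-1}\tau_{N-1}\gap_{\gamma,\rho}(u^N;\realoptu)
    \le C_0(\realoptu).
\end{equation*}

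Next I would invoke \cref{lemma:pdps-zm-bound} at index $i+1 = N+1$. Its hypothesis \eqref{eq:pdps-inertia-zm-bound-cond}, after substituting $\mu_{N+1}=\lambda_{N+1}$, is precisely the second inequality of \eqref{eq:pdps-final-general-conds-3}, while the conditions \eqref{eq:pd-lemma-cond-metric} it additionally requires were already verified above. It therefore follows that $\Test_{N+1}\Precond_{N+1}\ge \delta\,\Test_{N+1}$ with $\delta = 1-\sqrt{1-\kappa}$. Conjugating by $\Lambda_{N+1}$ preserves the inequality, so
\begin{equation*}
    \Lambda_{N+1}^*\Test_{N+1}\Precond_{N+1}\Lambda_{N+1} \ge \delta\,\Lambda_{N+1}^*\Test_{N+1}\Lambda_{N+1}.
\end{equation*}

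Finally, since $\Lambda_{N+1}^*\Test_{N+1}\Lambda_{N+1}$ is block-diagonal with blocks $\lambda_N^2\tauTest_N I$ and $\mu_{N+1}^2\sigmaTest_{N+1}I = \lambda_{N+1}^2\sigmaTest_{N+1}I$, writing $z^N-\realoptu=(\zeta^N-\realoptx,\,\eta^N-\realopty)$ expands the lower bound into the two squared terms on the left of \eqref{eq:pdps-general-estimate-simplified}, which delivers the claim. No genuine obstacle is anticipated; the entire content of the proof is the careful bookkeeping that aligns the indexing convention of \eqref{eq:pdps-inertia-structure}, where the dual parameter at step $i$ carries the index $i+1$, with the simplified indexing $\mu_i=\lambda_i$ adopted in the statement.
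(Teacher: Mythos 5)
Your proposal is correct and follows essentially the same route as the paper: verify that \eqref{eq:pdps-final-general-conds} with $\mu_i=\lambda_i$ implies \eqref{eq:pd-lemma-cond-metric}, \eqref{eq:pdps-inertia-zm-bound-cond}, and \eqref{eq:pdps-inertial-gap-conds}, apply \cref{lemma:pdps-gap-unrolling} to obtain \eqref{eq:pdps-general-estimate}, and then use \cref{lemma:pdps-zm-bound} together with the block-diagonal structure of $\Lambda_{N+1}^*\Test_{N+1}\Lambda_{N+1}$ to split the metric term into the two squared norms. The only detail the paper adds that you omit is the remark that the second part of \eqref{eq:pdps-final-general-conds-1} is exactly the definition of $\omega_i$ in \eqref{eq:pdps-inertia-precond}, so the algorithm's iterates genuinely instantiate the structural setup \eqref{eq:pdps-inertia-structure}; this is a minor bookkeeping point and does not affect correctness.
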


\begin{proof}
    We first show that the setup \eqref{eq:pdps-inertia-structure} and the conditions \eqref{eq:pd-lemma-cond-metric}, \eqref{eq:pdps-inertia-zm-bound-cond}, and \eqref{eq:pdps-inertial-gap-conds} hold.
    Indeed, the second part of \eqref{eq:pdps-final-general-conds-1} is simply the choice of $\omega_i$ in \eqref{eq:pdps-inertia-precond}, while the rest of \eqref{eq:pdps-inertia-precond} follows from the derivation of \cref{alg:pdps-inertial-explicit} from this structural setup in \cref{sec:pdps-derivation}.
    Moreover, since $\mu_i=\lambda_i$, the first part of \eqref{eq:pdps-final-general-conds-1} implies \eqref{eq:final-metric-cond} and \eqref{eq:pdps-gap-alignment-cond}.
    Likewise, \eqref{eq:pdps-final-general-conds-2} implies  \eqref{eq:pd-lemma-cond-metric-exp-primal} and \eqref{eq:pdps-inertial-cond-primal}.
    The conditions \eqref{eq:pdps-final-general-conds-3} in turn imply \eqref{eq:pd-lemma-cond-metric-exp-dual} and \eqref{eq:pdps-inertia-zm-bound-cond}. Together  \eqref{eq:pdps-final-general-conds-1} and \eqref{eq:pdps-final-general-conds-2} imply \eqref{eq:pdps-inertial-cond-dual}.
    Therefore \eqref{eq:pd-lemma-cond-metric}, \eqref{eq:pdps-inertia-zm-bound-cond}, and \eqref{eq:pdps-inertial-gap-conds} hold in their entirety.
    We can thus apply \cref{lemma:pdps-inertia-metric-update,lemma:pdps-gap-unrolling} to obtain the estimate \eqref{eq:pdps-general-estimate}.
    By application of \cref{lemma:pdps-zm-bound} we then derive \eqref{eq:pdps-general-estimate-simplified} from \eqref{eq:pdps-general-estimate}.
\end{proof}



\begin{theorem}
    \label{thm:step-length-rules-gamma=0-rho=0}
    Suppose $\gamma=0$ and $\rho=0$.
    Take $\tau_0, \sigma_0>0$ with $\tau_0\sigma_0\norm{K}^2 < 1$, $\lambda_0=\mu_0=1$, $\epsilon \in [0, 1)$, and update ($i \in \N$)
    \begin{subequations}%
    \label{eq:step-length-choices-gamma=0-rho=0}%
    \begin{align}
        \tau_{i+1} & \defeq \tau_i\inv\lambda_i\lambda_{i+1}, 
        &
        \omega_i & \defeq 1,
        \\
        \sigma_{i+1} & \defeq \sigma_i\inv\lambda_i\lambda_{i+1},
        &
        \mu_{i+1} & \defeq \lambda_{i+1} \defeq \lambda_i/(1+(1-\epsilon)\lambda_i).
    \end{align}%
    \end{subequations}%
    Then the iterates generated by \cref{alg:pdps-inertial-explicit} satisfy $\gap(u^N; \realoptu) \to 0$ at the rate $O(1/N)$ for any $\realoptu \in \inv H(0)$.
\end{theorem}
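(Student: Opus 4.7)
The plan is to invoke \cref{lemma:pdps-final-general} with a judicious choice of the (unspecified-in-the-theorem) testing parameters $\tauTest_i,\sigmaTest_i$, check its hypotheses in the special case $\gamma=\rho=0$, and then read off the rate from the linear growth of $\tauTest_{N-1}\tau_{N-1}$.

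First I would set $\tauTest_0=1$ and $\tauTest_i\defeq \lambda_i^{-2}$, mirroring the FISTA choice from \cref{lemma:tau-lambda-gamma-satisfaction}\,\ref{item:tau-lambda-gamma-satisfaction-1}. From the recursion $\tau_{i+1}=\tau_i\inv\lambda_i\lambda_{i+1}$ together with $\lambda_0=1$, an induction yields the closed forms $\tau_i=\tau_0\lambda_i$ and $\tauTest_i\tau_i=\tau_0/\lambda_i$. To enforce the alignment \eqref{eq:pdps-final-general-conds-1}, namely $\sigmaTest_i\sigma_i=\tauTest_i\tau_i$, I set $\sigmaTest_i\defeq(\tau_0/\sigma_0)\lambda_i^{-2}$; the analogous induction using $\sigma_{i+1}=\sigma_i\inv\lambda_i\lambda_{i+1}$ gives $\sigma_i=\sigma_0\lambda_i$, so that $\lambda_i^2\tauTest_i=1$ and $\lambda_i^2\sigmaTest_i=\tau_0/\sigma_0$ are both constant in $i$. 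The choice $\omega_i=1$ then trivially matches the formula for $\omega_i$ in \eqref{eq:pdps-final-general-conds-1}, since $\lambda_i\tauTest_i\tau_i=\tau_0$ is constant.

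Next I would verify \eqref{eq:pdps-final-general-conds-2}--\eqref{eq:pdps-final-general-conds-3}. With $\gamma=\rho=0$ the first inequalities of \eqref{eq:pdps-final-general-conds-2} and \eqref{eq:pdps-final-general-conds-3} reduce to $\lambda_i^2\tauTest_i\ge\lambda_{i+1}^2\tauTest_{i+1}$ and $\lambda_i^2\sigmaTest_i\ge\lambda_{i+1}^2\sigmaTest_{i+1}$, which hold with equality by the constancy just established. The recurrence for $\lambda_{i+1}$ rewrites as
\begin{equation*}
    \inv\lambda_{i+1}=\inv\lambda_i+(1-\epsilon),
\end{equation*}
so $\inv\lambda_N=1+(1-\epsilon)N$, which is non-decreasing and in particular $\lambda_{i+1}\le\lambda_i$. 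This gives the monotonicity $\tauTest_i\tau_i\le\tauTest_{i+1}\tau_{i+1}$; the remaining half of \eqref{eq:pdps-final-general-conds-2} unwinds as $\inv\lambda_{i+1}-1\le\inv\lambda_i$, which is the same recurrence. For the second inequality of \eqref{eq:pdps-final-general-conds-3} I use $\tauTest_i\tau_i^2=\tau_0^2$ and $\lambda_{i+1}^2\sigmaTest_{i+1}=\tau_0/\sigma_0$, so the condition becomes $(1-\kappa)\ge\tau_0\sigma_0\norm{K}^2$, which is satisfiable by the hypothesis $\tau_0\sigma_0\norm{K}^2<1$ with some $\kappa\in[0,1)$.

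Having satisfied all the hypotheses of \cref{lemma:pdps-final-general} with $\mu_i=\lambda_i$, the estimate \eqref{eq:pdps-general-estimate-simplified} specialises (using $\gamma=\rho=0$, hence $\gap_{\gamma,\rho}=\gap$) to
\begin{equation*}
    \tauTest_{N-1}\tau_{N-1}\gap(u^N;\realoptu)\le C_0(\realoptu),
\end{equation*}
and since $\tauTest_{N-1}\tau_{N-1}=\tau_0[1+(1-\epsilon)(N-1)]$ grows linearly in $N$, I would conclude $\gap(u^N;\realoptu)=O(1/N)$, establishing the claim. The only delicate bookkeeping step is keeping track of the hidden testing parameters and confirming that the induction correctly turns the step-length updates of \eqref{eq:step-length-choices-gamma=0-rho=0} into constants $\lambda_i^2\tauTest_i$ and $\lambda_i^2\sigmaTest_i$; everything else is direct substitution into \cref{lemma:pdps-final-general}.
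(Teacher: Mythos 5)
Your proposal is correct and follows essentially the same route as the paper: it invokes \cref{lemma:pdps-final-general} with testing parameters $\tauTest_i\propto\lambda_i^{-2}=\tau_0^2\tau_i^{-2}$ and $\sigmaTest_i\propto\lambda_i^{-2}$ (the paper's choices up to an irrelevant common scaling), checks \eqref{eq:pdps-final-general-conds}, and reads off the $O(1/N)$ rate from the linear growth of $\tauTest_{N-1}\tau_{N-1}$. The only cosmetic difference is that you verify the recurrences directly via the closed forms $\tau_i=\tau_0\lambda_i$ and $\inv\lambda_N=1+(1-\epsilon)N$, whereas the paper routes the verification of \eqref{eq:pdps-final-general-conds-2} and the growth estimate through \cref{lemma:tau-lambda-gamma-satisfaction}\,\ref{item:tau-lambda-gamma-satisfaction-3}.
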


\begin{proof}
    We will use \cref{lemma:pdps-final-general}, for which we need to verify \eqref{eq:pdps-final-general-conds}.
    We use \cref{lemma:tau-lambda-gamma-satisfaction}\,\cref{item:tau-lambda-gamma-satisfaction-3} to verify \eqref{eq:pdps-final-general-conds-2} for $\tauTest_i=\tau_i^{-2}$, $\tau_{i+1}=\tau_i(1-\lambda_{i+1})/(1-\epsilon\lambda_i)$, and $\lambda_{i+1}$ as in \eqref{eq:tau-lambda-gamma-satisfaction-3-rules}. With $\gamma=0$, the latter agrees with the expression for $\lambda_{i+1}$ in \eqref{eq:step-length-choices-gamma=0-rho=0}.
    With $\tauTest_i$ and $\rho=0$ inserted, the rest of \eqref{eq:pdps-final-general-conds} reads%
    \begin{subequations}%
    \label{eq:pdps-final-general-conds-gamma=0-rho=0}%
    \begin{align}%
        \label{eq:pdps-final-general-conds-gamma=0-rho=0-1}
        \sigmaTest_i\sigma_i &= \inv\tau_{i},
        & \omega_i\lambda_{i+1}\inv\tau_{i+1} & = \lambda_i\inv\tau_i.
        \\
        \label{eq:pdps-final-general-conds-gamma=0-rho=0-3}
        \lambda_{i}^2\sigmaTest_{i} & \ge \lambda_{i+1}^2\sigmaTest_{i+1},
        &
        (1-\kappa)\lambda_{i+1}^2\sigmaTest_{i+1} & \ge \norm{K}^2
        \qquad(i \in \N).
    \end{align}%
    \end{subequations}%
    Clearly the first part of \eqref{eq:pdps-final-general-conds-gamma=0-rho=0-3} holds by taking $\sigmaTest_i=\lambda_i^{-2}\inv\tau_0\inv\sigma_0$ for all $i \in \N$. Let us assume
    \begin{equation}
        \label{eq:pdps-final-general-conds-gamma=-rho=0-2}
        \omega_i=(\inv\lambda_{i+1}-1)/(\inv\lambda_i-\epsilon),
        \quad
        \tau_{i+1}=\tau_i\inv\lambda_i\lambda_{i+1}\omega_i,
        \quad\text{and}\quad
        \sigma_{i+1} \defeq \sigma_i\inv\lambda_i\lambda_{i+1}/\omega_i.
    \end{equation}
    Inserting $\tau_{i+1}=\tau_i(1-\lambda_{i+1})/(1-\epsilon\lambda_i)$ from above and $\omega_i$ from \eqref{eq:pdps-final-general-conds-gamma=-rho=0-2}, the second part of \eqref{eq:pdps-final-general-conds-gamma=0-rho=0-1} holds.
    It therefore only remains to secure the first part of \eqref{eq:pdps-final-general-conds-gamma=0-rho=0-1} and the second part of \eqref{eq:pdps-final-general-conds-gamma=0-rho=0-3}. With $\sigmaTest_i$ inserted, this is to say
    \begin{equation*}
        \sigma_i\tau_i = \sigma_0\tau_0\lambda_i^2
        \quad\text{and}\quad
        (1-\kappa) \ge \tau_0\sigma_0 \norm{K}^2.
    \end{equation*}%
    The second condition is simply our initial condition on the step lengths.
    The first condition holds if $\sigma_i=\inv\tau_i\lambda_i^2\sigma_0\tau_0$.
    Using that $\tau_{i+1}=\tau_i\inv\lambda_i\lambda_{i+1}\omega_i$, this holds when $\sigma_{i+1}$ as in \eqref{eq:pdps-final-general-conds-gamma=-rho=0-2}.
    We have therefore proved \eqref{eq:pdps-final-general-conds} to hold when \eqref{eq:pdps-final-general-conds-gamma=-rho=0-2} does, $\tauTest_i=\tau_i^{-2}$, $\sigmaTest_i=\lambda_i^{-2}\inv\tau_0\inv\sigma_0$, and $\tau_0\sigma_0\norm{K}^2 < 1$.

    Take now as stated $\omega_i=1$, and observe that the update rule for $\lambda_{i+1}$ in \eqref{eq:step-length-choices-gamma=0-rho=0} gives the rule for $\omega_i$ in \eqref{eq:pdps-final-general-conds-gamma=-rho=0-2}. Moreover, the rules for $\tau_{i+1}$ and $\sigma_{i+1}$ in \eqref{eq:step-length-choices-gamma=0-rho=0} are consistent with  \eqref{eq:pdps-final-general-conds-gamma=-rho=0-2}. Therefore  \eqref{eq:pdps-final-general-conds-gamma=-rho=0-2}, consequently  \eqref{eq:pdps-final-general-conds}, holds under the conditions of the theorem and the choices of the testing parameters $\tauTest_i$ and $\sigmaTest_i$ in the previous paragraph.
    \Cref{lemma:pdps-final-general} thus yields \eqref{eq:pdps-general-estimate-simplified}.
    Since $\gap_{\gamma,\rho}(u^N; \realoptu) \ge 0$ when $\realoptu \in \inv H(0)$, the growth estimate of \cref{lemma:tau-lambda-gamma-satisfaction}\,\cref{item:tau-lambda-gamma-satisfaction-3} in the case $\gamma=0$ applied in \eqref{eq:pdps-general-estimate-simplified} establishes the claimed convergence rate.
 \end{proof}

Thus, without any strong convexity, inertia and correction improve the ergodic $O(1/N)$ convergence of the gap for the PDPS to non-ergodic convergence.

\begin{theorem}
    \label{thm:step-length-rules-gamma>0-rho=0}
    Suppose $\gamma>0$ and $\rho=0$.
    Take $\epsilon \in [0, 1)$, $\tau_0, \sigma_0>0$ with $\tau_0\sigma_0\norm{K}^2 < 1$, initialise $\lambda_0 \defeq \mu_0 \defeq 1$, and update ($i \in \N$)
    \begin{subequations}%
    \label{eq:pdps-final-general-conds-gamma>0-rho=0}%
    \begin{align}
        \tau_{i+1} & \defeq \tau_i\inv\lambda_i\lambda_{i+1}\omega_i,
        &
        \omega_i & \defeq (\inv\lambda_{i+1}-1)/(\inv\lambda_i-\epsilon),
        \\
        \sigma_{i+1} & \defeq \sigma_i\inv\lambda_i\lambda_{i+1}/\omega_i,
        &
        \mu_{i+1} & \defeq \lambda_{i+1} \defeq \frac{\sqrt{\lambda_i^2+2\gamma\lambda_i\tau_i}}{1-\epsilon\lambda_i+\sqrt{\lambda_i^2+2\gamma\lambda_i\tau_i}}.
    \end{align}
    \end{subequations}
    Then the iterates generated by \cref{alg:pdps-inertial-explicit} satisfy both $\gap_{\gamma,0}(u^N; \realoptu) \to 0$ and $\norm{\zeta^N-\realoptx}^2 \to 0$ at the rate $O(1/N^2)$ for any $\realoptu \in \inv H(0)$.
\end{theorem}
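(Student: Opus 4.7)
The plan is to parallel closely the proof of \cref{thm:step-length-rules-gamma=0-rho=0}, the only substantive change being that we now invoke \cref{lemma:tau-lambda-gamma-satisfaction}\,\cref{item:tau-lambda-gamma-satisfaction-3} in its $\gamma>0$ branch rather than its $\gamma=0$ branch. As before, the strategy is to verify the hypotheses \eqref{eq:pdps-final-general-conds} of \cref{lemma:pdps-final-general} for a suitable choice of testing parameters $\tauTest_i$ and $\sigmaTest_i$, and then extract the rate from the growth of $\tauTest_{N-1}\tau_{N-1}$ and $\lambda_N^2 \tauTest_N$.

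First I would set $\tauTest_i \defeq \tau_i^{-2}$. The update for $\lambda_{i+1}$ in \eqref{eq:pdps-final-general-conds-gamma>0-rho=0} is exactly the rule from \eqref{eq:tau-lambda-gamma-satisfaction-3-rules}, and the implicit primal step-length recursion $\tau_{i+1}=\tau_i(1-\lambda_{i+1})/(1-\epsilon\lambda_i)$ can be recovered by substituting the stated $\omega_i$ into the stated $\tau_{i+1}$ rule. Thus the cited case of \cref{lemma:tau-lambda-gamma-satisfaction} establishes \eqref{eq:pdps-final-general-conds-2} and simultaneously yields the primal growth estimates $\tauTest_N\tau_N \ge c' N^2$ and $\lambda_N^2\tauTest_N \ge c N^2$ for constants $c,c'>0$.

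For the dual side (where $\rho=0$) the calculations of the previous proof go through essentially verbatim: take $\sigmaTest_i \defeq \lambda_i^{-2}\inv\tau_0\inv\sigma_0$, so that the first part of \eqref{eq:pdps-final-general-conds-3} holds as an equality; the first part of \eqref{eq:pdps-final-general-conds-1} reduces to $\sigma_i\tau_i = \sigma_0\tau_0 \lambda_i^2$, which together with the identity $\tau_{i+1}=\tau_i\inv\lambda_i\lambda_{i+1}\omega_i$ from the theorem statement forces the $\sigma_{i+1}$ update in \eqref{eq:pdps-final-general-conds-gamma>0-rho=0}; and the $K$-norm bound in \eqref{eq:pdps-final-general-conds-3} reduces to the initial compatibility $(1-\kappa) \ge \tau_0\sigma_0\norm{K}^2$, which can be met for some $\kappa \in [0,1)$ by our hypothesis $\tau_0\sigma_0\norm{K}^2 < 1$. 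The second part of \eqref{eq:pdps-final-general-conds-1} (defining $\omega_i$) is then consistent with the theorem's $\omega_i$ by the same computation as in the previous proof.

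With every hypothesis of \cref{lemma:pdps-final-general} verified, \eqref{eq:pdps-general-estimate-simplified} applies. Since $\realoptu \in \inv H(0)$ implies $\gap_{\gamma,0}(u^N;\realoptu) \ge 0$ and since $\delta = 1-\sqrt{1-\kappa} > 0$, dividing by $\tauTest_{N-1}\tau_{N-1}$ and $\delta \tauTest_N\lambda_N^2$, respectively, and invoking the two growth estimates above yields the claimed $O(1/N^2)$ rates for both the gap $\gap_{\gamma,0}(u^N;\realoptu)$ and the auxiliary-iterate error $\norm{\zeta^N-\realoptx}^2$. The main obstacle is not conceptual but bookkeeping: one must check that the implicit recursions derived from the $\gamma>0$ case of \cref{lemma:tau-lambda-gamma-satisfaction} are compatible with the explicit $(\tau_{i+1},\sigma_{i+1},\omega_i)$ updates and the two identities in \eqref{eq:pdps-final-general-conds-1}, which, as above, they are.
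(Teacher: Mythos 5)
Your proposal is correct and follows essentially the same route as the paper: the paper's proof likewise reuses the verification from \cref{thm:step-length-rules-gamma=0-rho=0} with $\tauTest_i=\tau_i^{-2}$ and $\sigmaTest_i=\lambda_i^{-2}\inv\tau_0\inv\sigma_0$, checks that the stated updates are consistent with \eqref{eq:pdps-final-general-conds-gamma=-rho=0-2}, and then applies \cref{lemma:pdps-final-general} together with the $\gamma>0$ growth estimates of \cref{lemma:tau-lambda-gamma-satisfaction}\,\cref{item:tau-lambda-gamma-satisfaction-3}. The only cosmetic difference is that you re-derive the dual-side conditions explicitly rather than citing the earlier proof wholesale.
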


\begin{proof}
    Note that $\lambda_{i+1}$ given in \eqref{eq:tau-lambda-gamma-satisfaction-3-rules} agrees with that in \eqref{eq:pdps-final-general-conds-gamma>0-rho=0}. Also note that in the proof of \cref{thm:step-length-rules-gamma=0-rho=0}, we did not involve the choices \eqref{eq:step-length-choices-gamma=0-rho=0} until the final paragraph. 
    Therefore, we may follow the proof of \cref{thm:step-length-rules-gamma=0-rho=0} to see \eqref{eq:pdps-final-general-conds} to hold when \eqref{eq:pdps-final-general-conds-gamma=-rho=0-2} does, $\tauTest_i=\tau_i^{-2}$, $\sigmaTest_i=\lambda_i^{-2}\inv\tau_0\inv\sigma_0$, and $\tau_0\sigma_0\norm{K}^2 < 1$.

    Observe now that \eqref{eq:pdps-final-general-conds-gamma=-rho=0-2} is consistent with the updates of $\omega_i$, $\tau_{i+1}$, and $\sigma_{i+1}$ in \eqref{eq:pdps-final-general-conds-gamma>0-rho=0}.
    By taking the testing parameters $\tauTest_i$ and $\sigmaTest_i$ as above, we have therefore verified  \eqref{eq:pdps-final-general-conds}, 
    so \cref{lemma:pdps-final-general} yields \eqref{eq:pdps-general-estimate-simplified}.
    The growth estimate of \cref{lemma:tau-lambda-gamma-satisfaction}\,\cref{item:tau-lambda-gamma-satisfaction-3} in the case $\gamma>0$ applied in \eqref{eq:pdps-general-estimate-simplified} establishes the claimed convergence rates.
\end{proof}

\begin{theorem}
    \label{thm:step-length-rules-gamma=0-rho>0}
    Suppose $\gamma=0$ and $\rho>0$.
    Take $\tau_0>0$ and $\epsilon \in [0, 1/2]$ with $\tau_0\norm{K}^2 < 2\rho$, initialise $\lambda_0 \defeq \mu_0 \defeq 1$, and update ($i \in \N$)
    \begin{align*}
        \tau_{i} & \defeq \tau_0,
        &
        \omega_i & \defeq (\inv\lambda_{i+1}-1)/(\inv\lambda_i-\epsilon)=\lambda_{i+1}\inv\lambda_i,
        \\
        \sigma_{i+1} & \defeq \frac{\lambda_i^2}{2\rho}
        &
        \mu_{i+1} & \defeq \lambda_{i+1} \defeq \frac{2}{1+\sqrt{1+4(\lambda_i^{-2}-\epsilon\inv\lambda_i)}},
    \end{align*}
    Then the iterates generated by \cref{alg:pdps-inertial-explicit} satisfy both $\gap_{0,\rho}(u^N; \realoptu) \to 0$ and $\norm{\eta^N-\realopty}^2 \to 0$ at the rate $O(1/N^2)$ for any $\realoptu \in \inv H(0)$.
\end{theorem}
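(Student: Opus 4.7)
Following the template of \cref{thm:step-length-rules-gamma=0-rho=0,thm:step-length-rules-gamma>0-rho=0}, the plan is to verify the conditions \eqref{eq:pdps-final-general-conds} of \cref{lemma:pdps-final-general} for a suitable choice of testing parameters, and then read off the rates from \eqref{eq:pdps-general-estimate-simplified} using the growth of $\inv\lambda_N$ supplied by \cref{lemma:lambda-recurrence}. Compared to \cref{thm:step-length-rules-gamma>0-rho=0}, the roles of primal and dual are swapped: now the dual carries the strong-convexity driven acceleration while the primal uses a constant step length $\tau_i\equiv\tau_0$, and the $\lambda$-update is the plain FISTA-style \eqref{eq:lambda-recurrence} rather than the accelerated \eqref{eq:tau-lambda-gamma-satisfaction-3-rules}.

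First I would take the testing parameters $\tauTest_i\defeq c_0\inv\tau_0\lambda_i^{-2}$ and $\sigmaTest_i\defeq c_0\lambda_i^{-2}\inv\sigma_i$ for a constant $c_0>0$, so that the alignment $\tauTest_i\tau_i=\sigmaTest_i\sigma_i=c_0\lambda_i^{-2}$ required in \eqref{eq:pdps-final-general-conds-1} holds by construction. The over-relaxation identity $\omega_i\lambda_{i+1}\tauTest_{i+1}\tau_{i+1}=\lambda_i\tauTest_i\tau_i$ then collapses to $\omega_i=\lambda_{i+1}\inv\lambda_i$; equivalence with the theorem's $(\inv\lambda_{i+1}-1)/(\inv\lambda_i-\epsilon)$ is an algebraic rewriting using the recurrence $\lambda_{i+1}^{-2}-\inv\lambda_{i+1}=\lambda_i^{-2}-\epsilon\inv\lambda_i$ of \cref{lemma:lambda-recurrence}.

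With $\gamma=0$ and $\tau_i\equiv\tau_0$, the primal conditions \eqref{eq:pdps-final-general-conds-2} degenerate into the equality $\lambda_i^2\tauTest_i=\lambda_{i+1}^2\tauTest_{i+1}$ together with $\lambda_{i+1}^{-2}-\inv\lambda_{i+1}\le\lambda_i^{-2}\le\lambda_{i+1}^{-2}$, both immediate from the recurrence and the monotonicity $\inv\lambda_{i+1}\ge\inv\lambda_i$. The substantive work lies in the dual pair \eqref{eq:pdps-final-general-conds-3}: inserting $\sigma_{i+1}=\lambda_i^2/(2\rho)$ gives $2\rho\sigma_i\inv\lambda_i=\lambda_{i-1}^2\inv\lambda_i$ for $i\ge 1$, and after cancellation the growth inequality reduces to $\lambda_{i-1}^{-2}+\inv\lambda_i\ge\lambda_i^{-2}$, which is once more a direct consequence of the recurrence with $\epsilon\ge 0$; the norm bound $(1-\kappa)\lambda_{i+1}^2\sigmaTest_{i+1}\ge\tauTest_i\tau_i^2\norm{K}^2$ collapses to $(1-\kappa)\cdot 2\rho\ge\tau_0\norm{K}^2$, solvable for some $\kappa\in[0,1)$ exactly because $\tau_0\norm{K}^2<2\rho$.

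Having verified the hypotheses, \cref{lemma:pdps-final-general} yields \eqref{eq:pdps-general-estimate-simplified}. The estimate $\inv\lambda_N\ge 1+(1-\epsilon)N/2$ of \cref{lemma:lambda-recurrence} then makes both $\tauTest_{N-1}\tau_{N-1}=c_0\lambda_{N-1}^{-2}$ and $\sigmaTest_{N+1}\lambda_{N+1}^2=2\rho c_0\lambda_N^{-2}$ of order $N^2$, which delivers the announced $O(1/N^2)$ rates for $\gap_{0,\rho}(u^N;\realoptu)$ and for $\norm{\eta^N-\realopty}^2$. The one slightly delicate point I anticipate is the $i=0$ boundary: since $\sigma_i=\lambda_{i-1}^2/(2\rho)$ is undefined at $i=0$, $\sigma_0$ must be pinned down separately (for instance $\sigma_0\defeq 1/(2\rho)$) and the $i=0$ instance of the dual growth inequality checked by hand. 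That check reduces to $c_0/\sigma_0\ge 0$ and so imposes nothing beyond positivity; the restriction $\epsilon\le 1/2$ then appears as a convenient monotonicity margin for this boundary bookkeeping rather than as a driver of the asymptotic arithmetic.
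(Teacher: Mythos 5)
Your proposal is correct and follows essentially the same route as the paper: verify \eqref{eq:pdps-final-general-conds} with $\tauTest_i\propto\lambda_i^{-2}$ and $\sigmaTest_i\propto\lambda_{i-1}^{-2}\lambda_i^{-2}$, reduce the dual growth condition to $\lambda_{i-1}^{-2}+\inv\lambda_i\ge\lambda_i^{-2}$ via the recurrence of \cref{lemma:lambda-recurrence} and the norm condition to $(1-\kappa)2\rho\ge\tau_0\norm{K}^2$, then invoke \cref{lemma:pdps-final-general} and the $\Theta(N^2)$ growth of $\lambda_N^{-2}$. The only cosmetic difference is at the $i=0$ boundary, where the paper introduces a fictitious $\lambda_{-1}>0$ while you pin down $\sigma_0$ and check the degenerate inequality directly; both resolve it the same way.
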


\begin{proof}
    We use \cref{lemma:tau-lambda-gamma-satisfaction}\,\cref{item:tau-lambda-gamma-satisfaction-1} to verify \eqref{eq:pdps-final-general-conds-2} for $\tauTest_i=\lambda_i^{-2}$ as well as $\tau_i=\tau_0$ and $\lambda_{i+1}$ as stated. Inserting the $\tauTest_i$ and $\tau_i$, the rest of \eqref{eq:pdps-final-general-conds} now reduces to
    \begin{align*}
        \sigmaTest_i\sigma_i & = \lambda_i^{-2}\tau_0,
        &
        \omega_i\inv\lambda_{i+1}&=\inv\lambda_i,
        \\
        \lambda_{i}^2\sigmaTest_{i}(1+2\rho\sigma_{i}\inv\lambda_{i}) & \ge \lambda_{i+1}^2\sigmaTest_{i+1},
        \quad\text{and}
        &
        (1-\kappa) \lambda_{i+1}^2 \sigmaTest_{i+1}& \ge \lambda_i^{-2} \tau_0^2 \norm{K}^2.
    \end{align*}
    The second condition is one version of our update rule for $\omega_i$. We still need to show that the two versions of the rule are equal.
    If we take $\sigmaTest_i \defeq 2\rho \tau_0\lambda_{i-1}^{-2}\lambda_i^{-2}$ and $\sigma_i$ as stated, introducing the new variable $\lambda_{-1}$, not used in the algorithm, the rest becomes
    \begin{equation*}
        \lambda_{i-1}^{-2}+\inv\lambda_{i} \ge \lambda_i^{-2}
        \quad\text{and}\quad
        (1-\kappa)2\rho \ge \tau_0 \norm{K}^2
        \quad (i \in \N).
    \end{equation*}
    The latter condition is satisfied by our initial step length assumption for some $\kappa \in (0, 1)$. Since $\lambda_0=1$, the first condition holds for $i=0$ for any $\lambda_{-1}>0$. By \cref{lemma:lambda-recurrence}, $\lambda_{i-1}^{-2}-\epsilon\inv\lambda_{i-1}=\lambda_{i}^{-2}-\inv\lambda_{i}$ for $i \in \N$.
    Therefore the first condition holds, and the two expressions for $\omega_i$ are equivalent.

    We have thus verified  \eqref{eq:pdps-final-general-conds}, so \cref{lemma:pdps-final-general} gives the estimate \eqref{eq:pdps-general-estimate-simplified}.
    The growth estimate of \cref{lemma:tau-lambda-gamma-satisfaction}\,\cref{item:tau-lambda-gamma-satisfaction-1} applied there establish the claimed gap convergence rate. The convergence rate of the dual auxiliary variable is determined by the rate of growth of $\lambda_{N+1}^2\sigmaTest_{N+1}= 2\rho\tau_0\lambda_N^{-2}$.
    By the same  \cref{lemma:tau-lambda-gamma-satisfaction}\,\cref{item:tau-lambda-gamma-satisfaction-1}, $\tauTest_N\tau_N=\lambda_N^{-2}\tauTest_0$ grows at the rate $\Theta(N^2)$, so we get the claimed $O(1/N^2)$ convergence.
\end{proof}

\begin{theorem}
    \label{thm:step-length-rules-gamma>0-rho>0}
    Suppose $\gamma>0$ and $\rho>0$.
    Take $\lambda \in (0, 1)$ and $\epsilon \in [0, 1)$ with $\norm{K}^2 < 4\gamma\rho(\inv\lambda-\epsilon)(\inv\lambda-1)$.
    Update ($i \in \N$)
    \begin{align*}
        \tau_{i} & \defeq \lambda^2/[2\gamma(1-\lambda)],
        &
        \omega_i & \defeq (\inv\lambda-1)/(\inv\lambda-\epsilon),
        \\
        \sigma_{i} & \defeq \lambda^2/[2\rho(1-\lambda)],
        &
        \mu_{i+1} & \defeq \lambda_{i+1} \defeq \lambda.
    \end{align*}
    Then the iterates generated by \cref{alg:pdps-inertial-explicit} satisfy both $\gap_{\gamma,\rho}(u^N; \realoptu) \to 0$ and $\norm{z^N-\realoptu}^2 \to 0$ at a linear rate for (the unique) $\realoptu \in \inv H(0)$.
\end{theorem}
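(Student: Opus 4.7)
The plan is to mimic the structure of the preceding theorems: verify the hypotheses of \cref{lemma:pdps-final-general} with an appropriate choice of testing parameters, then extract the linear rate from the exponential growth guaranteed by \cref{lemma:tau-lambda-gamma-satisfaction}\,\cref{item:tau-lambda-gamma-satisfaction-2}. Concretely I would set $\tauTest_i \defeq c^i \tauTest_0$ with $c \defeq (1-\epsilon\lambda)/(1-\lambda)$, noting $c > 1$ since $\epsilon \in [0,1)$ and $\lambda \in (0,1)$. The alignment condition $\tauTest_i\tau_i = \sigmaTest_i\sigma_i$ in the first part of \eqref{eq:pdps-final-general-conds-1}, together with the stated $\tau_i$ and $\sigma_i$, forces $\sigmaTest_i \defeq \tauTest_i\rho/\gamma$; this is compatible with the same geometric growth rate $c$, which is crucial.

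With these choices, \eqref{eq:pdps-final-general-conds-2} follows directly from \cref{lemma:tau-lambda-gamma-satisfaction}\,\cref{item:tau-lambda-gamma-satisfaction-2} applied to the primal side with constant $\lambda_i\equiv\lambda$ and $\tau_i\equiv\lambda^2/[2\gamma(1-\lambda)]$. By the symmetry of the roles of $(\gamma,\tau,\tauTest)$ and $(\rho,\sigma,\sigmaTest)$, the first part of \eqref{eq:pdps-final-general-conds-3} follows from the same lemma applied on the dual side. For the second part of \eqref{eq:pdps-final-general-conds-1}, substituting $\tauTest_{i+1}\tau_{i+1} = c\tauTest_i\tau_i$ and $\lambda_{i+1}=\lambda_i=\lambda$ yields $\omega_i = 1/c = (1-\lambda)/(1-\epsilon\lambda) = (\inv\lambda-1)/(\inv\lambda-\epsilon)$, matching the statement.

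The only condition that couples the primal and dual parameters is the second part of \eqref{eq:pdps-final-general-conds-3}. Substituting $\sigmaTest_{i+1} = c\tauTest_i\rho/\gamma$, $\tau = \lambda^2/[2\gamma(1-\lambda)]$, $\lambda_{i+1}=\lambda$, and simplifying, it reduces to
\begin{equation*}
    (1-\kappa)\cdot 4\gamma\rho\,(\inv\lambda-1)(\inv\lambda-\epsilon) \ge \norm{K}^2,
\end{equation*}
which admits some $\kappa \in [0,1)$ precisely by the hypothesis $\norm{K}^2 < 4\gamma\rho(\inv\lambda-\epsilon)(\inv\lambda-1)$. This is where the $\norm{K}$ bound from the theorem statement is consumed.

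Having verified \eqref{eq:pdps-final-general-conds}, \cref{lemma:pdps-final-general} yields \eqref{eq:pdps-general-estimate-simplified}. Since \cref{lemma:tau-lambda-gamma-satisfaction}\,\cref{item:tau-lambda-gamma-satisfaction-2} gives $\tauTest_N\tau_N \ge \tauTest_0\tau c^N$ and $\lambda_N^2\tauTest_N \ge \lambda^2\tauTest_0 c^N$ (with analogous growth for the dual weights), dividing the estimate by these factors produces the claimed $O(c^{-N})$ linear rate for both $\norm{z^N-\realoptu}^2$ and $\gap_{\gamma,\rho}(u^N;\realoptu)$. Uniqueness of $\realoptu \in \inv H(0)$ is immediate from the strong monotonicity of $H$ induced by $\gamma,\rho > 0$. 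The main obstacle is not conceptual — it is purely bookkeeping, namely verifying that the geometric ratio $c$ is the same for the primal and dual testing sequences and that the $\norm{K}^2$ substitution collapses cleanly; once the primal case of \cref{thm:step-length-rules-gamma>0-rho=0} and its dual counterpart have been engineered to have compatible growth, the rest unwinds routinely.
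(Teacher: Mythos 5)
Your proposal is correct and follows essentially the same route as the paper's own proof: the same geometric testing parameters $\tauTest_i = c^i\tauTest_0$ with $c=(1-\epsilon\lambda)/(1-\lambda)$, the same forced choice $\sigmaTest_i=\tauTest_i\rho/\gamma$ from the alignment condition, the same reduction of the coupling condition to $(1-\kappa)\,4\gamma\rho(\inv\lambda-\epsilon)(\inv\lambda-1)\ge\norm{K}^2$, and the same appeal to \cref{lemma:pdps-final-general} and \cref{lemma:tau-lambda-gamma-satisfaction}\,\cref{item:tau-lambda-gamma-satisfaction-2} for the exponential growth. No gaps.
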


\begin{proof}
    We use \cref{lemma:tau-lambda-gamma-satisfaction}\,\cref{item:tau-lambda-gamma-satisfaction-2} to verify \eqref{eq:pdps-final-general-conds-2} for $\tauTest_i=c^i$ for $c \defeq (1-\epsilon\lambda)/(1-\lambda) > 1$ as well as $\tau_i \equiv \tau_0$ and $\lambda_{i+1} \equiv \lambda$ as stated. The rest of \eqref{eq:pdps-final-general-conds} now reduces to
    \begin{align*}
        \sigmaTest_i\sigma_i & = \tauTest_i \tau_0,
        &
        \omega_i c & = 1,
        \\
        \sigmaTest_{i}(1+2\rho\sigma_{i}\inv\lambda) & \ge \sigmaTest_{i+1},
        \quad\text{and}
        &
        (1-\kappa) \lambda^2 \sigmaTest_{i+1}& \ge \tauTest_i \tau_0^2 \norm{K}^2.
    \end{align*}
    Clearly our choice of $\omega_i=1/c$ satisfies the second condition.
    Taking $\sigmaTest_i=\tauTest_i\rho/\gamma$ and, as stated, $\sigma_i=\gamma\tau_0/\rho=\lambda^2/[2\rho(1-\lambda)]$, the first condition is also satisfied, while the third condition becomes $\tauTest_{i+1}(1+2\gamma\tau_0\inv\lambda) \ge \tauTest_i$. As $\lambda_i \equiv \lambda$, this is the first part of \eqref{eq:pdps-final-general-conds-2}, which we have already verified.
    The last condition becomes $(1-\kappa)\lambda^2\rho\inv\gamma c \ge \tau_0^2 \norm{K}^2$, which with $c$ and $\tau_0$ expanded is $4(1-\kappa)\gamma\rho(1-\epsilon\lambda)(1-\lambda) \ge \lambda^2 \norm{K}^2$. This is secured by our assumed bound on $\norm{K}$.

    We have thus verified  \eqref{eq:pdps-final-general-conds}, so \cref{lemma:pdps-final-general} yields the estimate \eqref{eq:pdps-general-estimate-simplified}.
    The growth estimate of \cref{lemma:tau-lambda-gamma-satisfaction}\,\cref{item:tau-lambda-gamma-satisfaction-2} applied there establish the claimed gap and primal variable convergence rates. Since $\sigmaTest_i=\tauTest_i\rho/\gamma$ and $\mu_i=\lambda_i$, the dual variable converges at the same rate as the primal variable.
\end{proof}

\begin{remark}[Partial gaps]
    \label{rem:pdps-partial-gap}
    Convergence of the Lagrangian gap is weak compared to the true duality gap.
    Suppose the product set $B_x \times B_y \subset X \times Y$ is bounded and contains some $\realoptu \in \inv H(0)$.
    In the literature, \term{partial gaps} are considered,
    \[
        0 \le
        \gap(u; B_x, B_y)
        \defeq
        \sup_{\optx \in B_x} \inf_{\opty \in B_y}~ \gap(u; (\optx, \opty)).
    \]
    Ergodic partial gap estimates can be derived in the non-inertial setting, see \cite{chambolle2010first}, because the fact that $\realoptu \in \inv H(0)$ is never actually needed in the proofs; $\realoptu \in X \times Y$ can be any element. Indeed, even in our work, the main reason we need to assume $\realoptu$ to be a solution are the final phases of the unrolling \cref{lemma:h-sum-function,lemma:h-sum-function-smooth}.
    However, because of this, we cannot derive partial gap estimates.
\end{remark}

\begin{remark}[Forward step]
    If we want to solve $\min_{x \in X} G(x)+E(x)+F(Kx)$, where $E$ is convex with $\grad E$ $L$-Lipschitz, using \cref{lemma:h-sum-function-smooth}, it is possible to incorporate into \cref{alg:pdps-inertial-explicit} a forward step with respect to $E$: we change the update of $\nextx$ into
    \[
         \nextx \defeq \prox_{\tilde\tau_i G}(
                            \this{x}_\gamma
                            -\tilde\tau_i[K^*\this{\tilde y}+\grad E(\this{\bar x})]
                            ).
    \]
    In this case, also minding \cref{lemma:pdps-zm-bound}, we have to ensure that $\tau_i L \le 1-\sqrt{1-\kappa}$. It is not difficult to verify that the update rule of \cref{lemma:tau-lambda-gamma-satisfaction}\,\ref{item:tau-lambda-gamma-satisfaction-3} satisfies $\lambda_{i+1} \in (\epsilon, 1)\lambda_i$, hence $\tau_{i+1} \le \tau_i$.

    In the proofs of \cref{thm:step-length-rules-gamma=0-rho=0,thm:step-length-rules-gamma>0-rho=0} we can take $\kappa \in (0, 1)$ such that $\sigma_0\tau_0\norm{K}^2=1-\kappa$, so we are led to the condition $\sqrt{\sigma_0\tau_0}\norm{K} + \tau_0 L \le 1$, which also ensures, hence replaces, the original bound $\sigma_0\tau_0\norm{K}^2 < 1$.

    Likewise, in the proof of \cref{thm:step-length-rules-gamma=0-rho>0}, $\tau_0 \norm{K}^2 = 2\rho(1-\kappa)$ leads to $\sqrt{\tau_0/(2\rho)} \norm{K} + \tau_0 L \le 1$.

    In the proof of \cref{thm:step-length-rules-gamma>0-rho>0}, $4(1-\kappa)\gamma\rho(\inv\lambda-\epsilon)(\inv\lambda-1) = \lambda^2 \norm{K}^2$ similarly leads to the replacement initialisation bound $(4\gamma\rho(\inv\lambda-\epsilon)(\inv\lambda-1))^{-1/2}\norm{K} + \tau_0 L \le 1$.
\end{remark}

\section{Numerical experience}
\label{sec:numerical}

We study the performance of the proposed algorithm on three image processing and inverse problems: denoising, sparse Fourier inversion, and Positron Emission Tomography (PET), all with total variation regularisation. We also performed experiments on deblurring, where the results were comparable to denoising. Denoising is the most basic image processing task, while sparse Fourier inversion is used for magnetic resonance image reconstruction; see, e.g., \cite{benning2015preconditioned,knoll2010second}.
These two problems are of the form
\begin{equation}
    \label{eq:denoising}
    \min_{x \in \R^{n_1n_2}}~ \frac{1}{2}\norm{z-Tx}_2^2 + \beta \norm{Dx}_{2,1},
\end{equation}
where $n_1 \times n_2$ is the size of the unknown image $x$ in pixels, $z \in \R^m$ is the corrupted data, and $\beta>0$ a regularisation parameter. The matrix $D \in \R^{2n_1n_2 \times n_1n_2}$ is a discretisation of the gradient operator, and $\norm{g}_{2,1} \defeq \sum_{i=1}^{n_1n_2} \sqrt{g_{i,1}^2+g_{i,2}^2}$ for $g=(g_{\cdot,1},g_{\cdot,2}) \in \R^{2n_1n_2}$.
We take $D$ as forward-differences with Neumann boundary conditions.

The operator $T \in \R^{k \times n_1n_2}$ depends on the problem in question: for denoising, $T=I$ is the identity
and for sparse Fourier inversion it is the composition $T=S\mathcal{F}$ with a sub-sampling operator $S \in \R^{k \times n_1n_2}$ and the discrete Fourier transform $\mathcal{F}$.
For denoising
$k=n_1n_2$, while for sparse Fourier reconstruction, $k \ll n_1n_2$.

To implement variants of the PDPS, we note that \eqref{eq:denoising} can in all three cases be written in the saddle point form
\[
    \min_{x \in \R^{n_1n_2}} \max_{y \in \R^{2n_1n_2}}~
        \frac{1}{2}\norm{z- T x}_2^2
        + \iprod{Dx}{y} - \delta_{\beta B}(y),
\]
where $B=B_{\R^2}^{n_1n_2}$ for $B_{\R^2}$ the Euclidean unit ball in $\R^2$. Since $T$ is in both cases related to a unitary operator, we can easily compute the proximal map of $G(x) \defeq \frac{1}{2}\norm{z- T x}_2^2$.

The PET problem is slightly different. We take as $T$ a discrete Radon transform, each $[Tx]_j$ being the integral of the image $x$ over a line with angle parameter $\theta_j$ and displacement $r_j$. As the efficient and precise realisation of such an operator in general cases is outside the scope of the present work, in our simplified setting, we consider only the four angles $\theta_j \in \{\text{0\textdegree, 45\textdegree, 90\textdegree, 135\textdegree}\}$ and displacements $r_j$ such that $Tx$ consists of all row sums, all column sums, and all diagonal sums of $x$ rewritten as a $n_1 \times n_2$ matrix. We also change the first fidelity term in \eqref{eq:denoising} to model, instead of Gaussian noise, Poisson noise.
Finally, we need to force $x \ge 0$. That is, our problem is
\begin{equation*}
    \min_{x \in [0, \infty)^{n_1n_2}}~ \iprod{Tx}{\mathbb 1} - \iprod{b}{\log(Tx+c)} + \beta \norm{Dx}_{2,1},
\end{equation*}
where $\mathbb{1} \defeq (1,\ldots,1) \in \R^k$, $b \in (0, \infty)^k$ is the measured data, and $c \in (0, \infty)^k$ is a background intensity, assumed known. The logarithm is applied componentwise.

Computing the proximal step with respect to the fidelity term is challenging due to the structure of $T$. We therefore write also this term as a conjugate, observing that $g_j(z) \defeq z - b_j\log(z+c_j)$ has the conjugate $g_j^*(\phi_j) = -b_j + c_j(1-\phi_j) + b_j \log(b_j/(1-\phi_j))$.
Introducing the additional upper bound $x \le 1$, this leads to
\[
    \min_{x \in \R^{n_1n_2}} \max_{(\phi, y) \in \R^k \times \R^{2n_1n_2}}~ \delta_{[0,1]^{n_1n_2}}(x)
        + \iprod{(Tx, Dx)}{(\phi, y)} - \left(\delta_{\beta B}(y) + \sum_{j=1}^k g_j^*(\phi_j)\right),
\]
Without the additional upper bound, this problem arranged as the prototype problem \eqref{eq:saddle} would have $G=\delta_{[0,\infty)^{n_1n_2}}$, which has the conjugate $G^*=\delta_{(-\infty,0]^{n_1n_2}}$. Although our algorithms guarantee $\nextx \in [0,\infty)^{n_1n_2}$, the conjugate will cause the true (non-Lagrangian) duality gap
\begin{equation}
    \label{eq:truegap}
    \tilde\gap(x, y) \defeq G(x)+F(Kx)+G^*(-K^*y)+F^*(y) \ge \gap(x, y; \realoptx, \realopty)
\end{equation}
to be infinite in practise. However, we wish to report the true duality gap instead of the Lagrangian duality gap, as it does not depend on knowing a solution $(\realoptx, \realopty)$. This is why we have added the upper bound $x \le 1$. Any greater upper bound would also work, giving a slightly different duality gap.

\subsection{Data}
\label{sec:datasetup}

We performed the numerical experiments on the first two of our models on the parrot image (\#23) from the free Kodak image suite photo, depicted in \cref{fig:kodim23-original} together with the corrupted data and restored images for the test problems.
We also performed some experiments (see \cref{fig:multi-results}) on all 24 images of this image suite. However, the effect of the exact image on the ranking of the tested algorithms is generally small.
The size of all the images is $n_1 \times n_2 = 768 \times 512$. To study scalability, we also scaled it down to $n_1 \times n_2 = 192 \times 128$ pixels. Together with the dual variable, the problem dimensions are therefore $768 \cdot 512 \cdot 3 = 1179648 \simeq 10^6$ and $128 \cdot 128 \cdot 3 = 49152 \approx 4 \cdot 10^4$.

%

For the denoising problem we added Gaussian noise with standard deviation $51$ ($-13.9$dB) to the original test image.
To remove the noise, we first choose $\beta=0.2$ (low regularisation parameter), and then $\beta=1$ (high regularisation parameter).
Following \cite{tuomov-tgvlearn}, we scale this parameter by the factor $0.25$ for operations on the downscaled image.
We also added noise in the other test problems to avoid \term{inverse crimes} \cite{mueller2012linear}. For sparse Fourier inversion, we used the same level of noise as for denoising.
The
sparse Fourier inversion experiments are only performed on the original non-down-scaled image with the regularisation parameter
$\beta=0.1$ (sparse Fourier inversion).

For the PET problem, instead of photographs, we use the Shepp--Logan phantom in \cref{fig:phantom}. This is because the limited number of angles encoded in $T$ (reduction of data to mere $2.3\%$ for the phantom) would not give a recognisable reconstruction of a more complex image. Moreover, the phantom is more relevant to the problem in question. As the resolution, we take $n_1 \times n_2 = 256 \times 256$. To obtain the simulated measurement data $b$, we apply Poisson noise to the row, column and diagonal sums in $Tx$, and then add the background $c \defeq \mathbb{1}$.

\begin{figure}
    \centering
    \subcaptionbox{Original\label{fig:kodim23-original}}{%
        \includegraphics[width=0.3\textwidth]{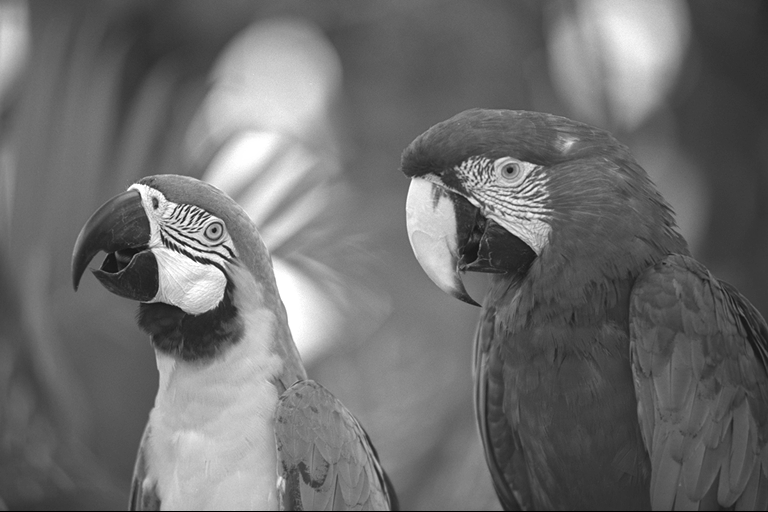}%
    }\,%
    \subcaptionbox{Noisy image}{%
        \includegraphics[width=0.3\textwidth]{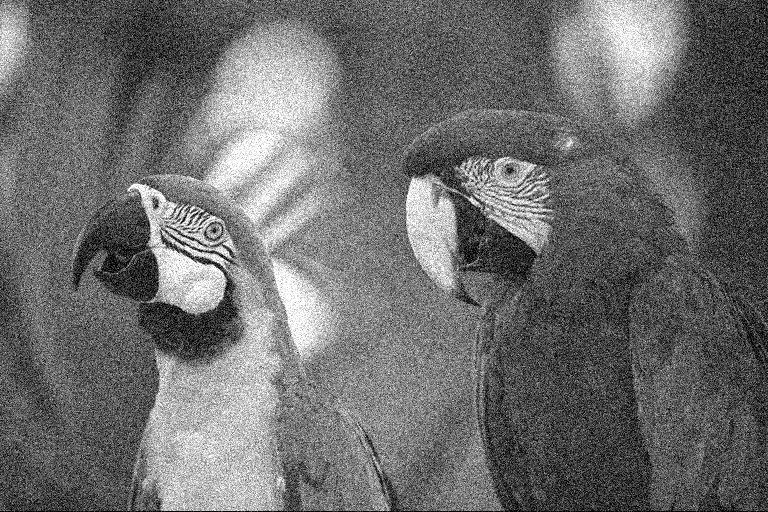}%
    }\,%
    \subcaptionbox{Denoised}{
        \includegraphics[width=0.3\textwidth]{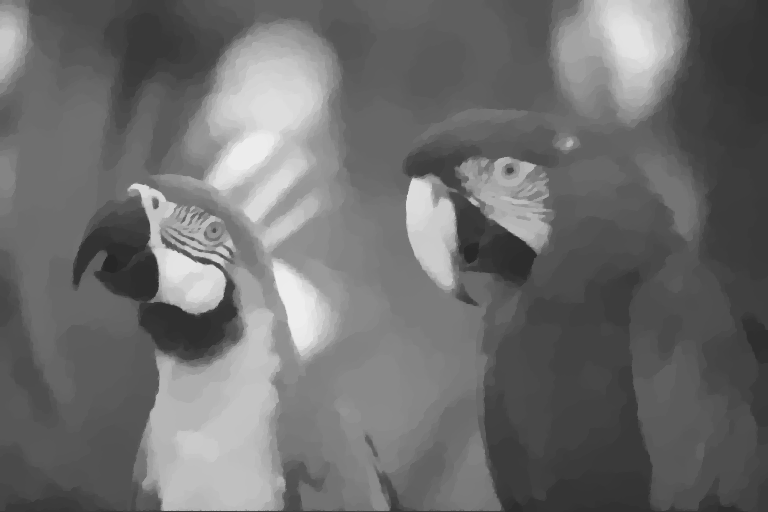}%
    }\\%
    \subcaptionbox{Fourier subsampling mask\label{fig:spiral}}{%
        \includegraphics[width=0.3\textwidth]{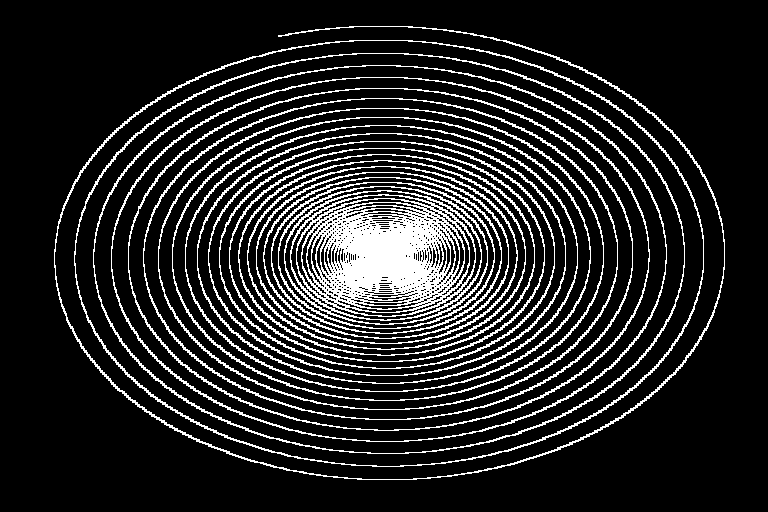}%
    }\,%
    \subcaptionbox{Zero-filling inversion\label{fig:kodim23-zerofill}}{%
        \includegraphics[width=0.3\textwidth]{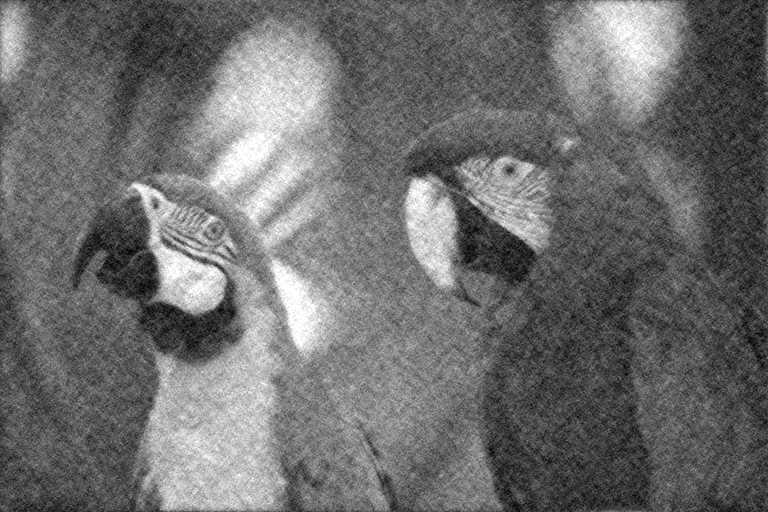}%
    }\,%
    \subcaptionbox{Sparse Fourier inversion}{%
        \includegraphics[width=0.3\textwidth]{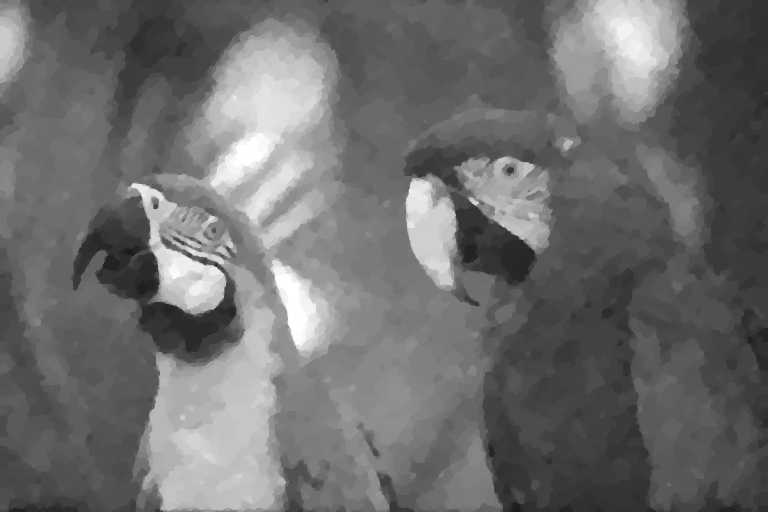}%
    }%
    \caption{Input data and reconstructions.
        The original image is \#23 from the free Kodak image suite, available online at the time of writing at \url{http://r0k.us/graphics/kodak/}.
        Since raw data $z$ for the sparse Fourier inversion is not visually informative, (\subref{fig:kodim23-zerofill}) displays the naïve zero-filling inversion $\mathcal{F}^*S^*z$ for the subsampling operator $S$ corresponding to the spiral mask in (\subref{fig:spiral}).}
    \label{fig:kodim23}
\end{figure}

\begin{figure}
    \centering
    \subcaptionbox{Phantom\label{fig:phantom-original}}{%
        \includegraphics[width=0.3\textwidth]{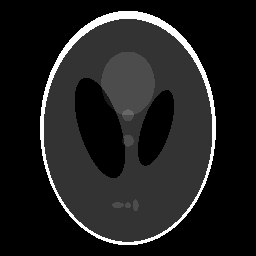}%
    }\,%
    \subcaptionbox{4-angle reconstruction\label{fig:phantom-reconstr}}{%
        \includegraphics[width=0.3\textwidth]{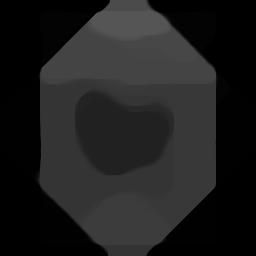}%
    }
    \caption{Shepp--Logan brain phantom and its reconstruction from simulated 4-angle (0\textdegree, 45\textdegree, 90\textdegree, 135\textdegree) positron emission tomography. The 4-angle tomography of a $256 \times 256$ image consists of $1534$ data points, meaning the reconstruction is achieved with just $2.3\%$ of data.}
    \label{fig:phantom}
\end{figure}

\subsection{Algorithmic setup}
\label{sec:numsetup}

We compare our algorithm (IC-PDPS) to the basic PDPS of \cite{chambolle2010first}, and the basic inertial (I-PDPS) and over-relaxed (R-PDPS) variants from \cite{chambolle2014ergodic}. The latter is essentially the V\~u--Condat algorithm. We do not include FISTA and other non-primal-dual algorithms in our comparisons, as of our example problems, they are easily applied only to TV denoising in its dual form. Similarly, the basic ADMM \cite{gabay} requires difficult inversions for our problems. Its more efficient preconditioned variant \cite{zhang2011unified}, on the other hand, is equivalent to the PDPS \cite{benning2015preconditioned}.

We use the same initial choices of $\tau_0=9.9/L$ and $\sigma_0=0.1/L$ with $L \defeq \sqrt{8} \ge \norm{D}$ \cite{chambolle2004algorithm} for all algorithms and the denoising and sparse Fourier inversion model problems. For the PET problem we take $\sigma_0=30/L'$ and $\tau_0=0.033/L'$ for an estimate $L' \ge \sqrt{\norm{T}^2+L^2}$. The ratio between $\tau_0$ and $\sigma_0$ has been hand-optimised for the baseline PDPS.
For the R-PDPS we take the additional over-relaxation parameter $\rho=1.5$.
For the I-PDPS we use fixed inertial parameter $\alpha=0.9/3$: according to \cite{chambolle2014ergodic}, the sequence of parameters $\{\alpha_i\}_{i \in \N}$ has to be non-decreasing with $\alpha_i<1/3$.
We also tested the FISTA rule, which did in practise yield better results for TV denoising, but completely failed for the other problems. Hence we use the provably convergent fixed parameter.

The denoising problem is strongly convex with factor $\gamma=1$, so we include results for both the unaccelerated and accelerated versions of the PDPS and IC-PDPS (\cref{thm:step-length-rules-gamma=0-rho=0,thm:step-length-rules-gamma>0-rho=0}). We also apply the rules of \cref{thm:step-length-rules-gamma=0-rho>0} to the problem with the primal and dual variables exchanged. This is denoted `dual IC-PDPS'.
The R-PDPS and the I-PDPS cannot with provable convergence be combined with strong convexity based acceleration: trying to do so was the starting point of our research. For acceleration we use $\gamma=0.5<1$, which is the maximal value for which the ergodic gap is known to converge at the rate $O(1/N^2)$ for the PDPS ($\gamma=1$ only yields convergence of the iterates; see \cite{chambolle2010first,tuomov-proxtest,tuomov-blockcp,tuomov-cpaccel}). For IC-PDPS $\gamma=1$ is allowed, and provably yields convergence of the gap, but in practise yields worse results than $\gamma=0.5$.

The IC-PDPS has one further parameter: $\epsilon \in [0, 1)$. For denoising and sparse Fourier inversion we generally take $\epsilon=0.7$, and for PET, $\epsilon=0.9$. We also report the denoising convergence behaviour for $\epsilon=0.5$ and $\epsilon=0$ in \cref{fig:tv-denoising-highreg-epsilon-comparison}.

For our reporting, we computed a target optimal solution $\realoptx$ by taking one million iterations of the basic PDPS.
However, the convergence of the basic PDPS for sparse Fourier inversion appears to be very slow: judging by the gap in \cref{fig:tv-sparsefft-gap-hq-zinit}, the IC-PDPS converges much faster, while both the PDPS and I-PDPS flatten out.
We therefore computed the target solution for sparse Fourier inversion by taking one million iterations of the IC-PDPS.
Note that the target solution is not used to compute the gap; instead of the Lagrangian duality gap \eqref{eq:gap}, we report true duality gap given in \eqref{eq:truegap}, as this does not depend on knowing a solution $(\realoptx,\realopty)$.

We report the distance to $\realoptx$ in decibels $10\log_{10}(\norm{x^i-\realoptx}^2/\norm{\realoptx}^2)$, as well as the duality gap, again in decibels relative to the initial gap as $10\log_{10}(\tilde\gap(\thisx, \thisy)^2/\tilde\gap(x^0, y^0)^2)$.
For the initial iterates we always took $x^0=0$ and $y^0=0$.
The hardware we used was a MacBook Pro with 16GB RAM and a 2.8 GHz Intel Core i5 CPU. The codes were written in MATLAB+C-MEX.

\subsection{Results}

The results for TV denoising of the downscaled image are in \cref{fig:tv-denoising-lowres}, and for the original image in \cref{fig:tv-denoising,tab:tv-denoising}. The latter includes both the high and low values of the regularisation parameter $\beta$. For the downscaled experiments we only report the lower value of $\beta$. The comparison for different values of $\epsilon$ for IC-PDPS is moreover in \cref{fig:tv-denoising-highreg-epsilon-comparison}, for the higher value of $\beta$.
The results for sparse Fourier inversion are in \cref{fig:tv-sparsefft,tab:tv-sparsefft}, and for PET in \cref{fig:tv-pet,tab:tv-kl}.
Finally, \cref{fig:multi-results} displays for denoising and sparse Fourier inversion the minimum and maximum interval for the duality gap over all 24 images in the image suite. We have excluded R-PDPS from these results to avoid overcrowding; its performance is comparable to I-PDPS, as can be gleaned from the other figures.

For TV denoising, the unaccelerated IC-PDPS is clearly the worst algorithm, while I-PDPS and R-PDPS slightly improve upon the basic PDPS. As expected from the $O(1/N)$ versus $O(1/N^2)$ convergence rates, all of these methods are significantly worse than the accelerated PDPS, the accelerated IC-PDPS, and the accelerated dual IC-PDPS. For the downscaled image and for low $\beta$ for the original resolution image, they are all comparable for the gap, but accelerated IC-PDPS somewhat surprisingly has asymptotically better iterate convergence. Of course, judging by the timings in \cref{tab:tv-denoising} in particular, the iterations of the IC-PDPS are somewhat more costly, so the basic accelerated PDPS appears the best choice in this case.

For high $\beta$, the results are initially similar, but both variants of the accelerated IC-PDPS are asymptotically better than the accelerated PDPS. This suggests that the IC-PDPS might perform better when there is ``more work to be done''.
This is somewhat confirmed by the results for sparse Fourier inversion, which is a significantly more difficult problem than TV denoising. There the gap convergence performance of IC-PDPS is significantly better than PDPS or I-PDPS: according to \cref{tab:tv-sparsefft}, compared to the PDPS only 75\% of the computational time is required to obtain $-35\text{dB}$ gap reduction.

For the PET problem, \cref{fig:tv-pet,tab:tv-kl} indicate that IC-PDPS has good gap convergence behaviour, taking 30\% less time than the PDPS to reach $-40\text{dB}$, but has primal variable convergence behaviour comparable to the PDPS. This indicates that the IC-PDPS has good convergence of the dual variable.

From \cref{fig:multi-results} we can see that the exact image does not significantly alter the rankings of the algorithms, with IC-PDPS performing significantly better than the other methods for sparse Fourier inversion.

\subsection{Conclusion}

While our proposed IC-PDPS does not always improve upon the basic, inertial, and over-relaxed PDPS, it never does significantly worse by iteration count. For some problems, such as sparse Fourier inversion and Positron Emission Tomography, it offers improved performance.
Moreover, we have theoretically guaranteed the $O(1/N)$ convergence of the Lagrangian gap functional or the $O(1/N^2)$ convergence of the strong convexity adjusted gap $\gap_{\gamma,\rho}$.
This is better than the merely ergodic convergence known of the PDPS and the basic inertial and over-relaxed variants.

\pgfplotsset{
    compat=1.3,
    every axis/.append style={
        ylabel shift=-8pt,
        xlabel shift=-2pt,
        ylabel near ticks,
    },
    every axis legend/.append style={
        inner xsep=1pt,
        inner ysep=1pt,
        nodes={inner ysep=1pt, inner xsep=1pt},
        legend image post style={scale=0.8}
    }
}

\def\setlength{\figureheight}{0.35\textwidth}\setlength{\figurewidth}{0.42\textwidth}\scriptsize\hskip-6pt\input{#}1{\setlength{\figureheight}{0.27\textwidth}\setlength{\figurewidth}{0.42\textwidth}\scriptsize\hskip-6pt\input{#1}}


\begin{figure}[tbp!]
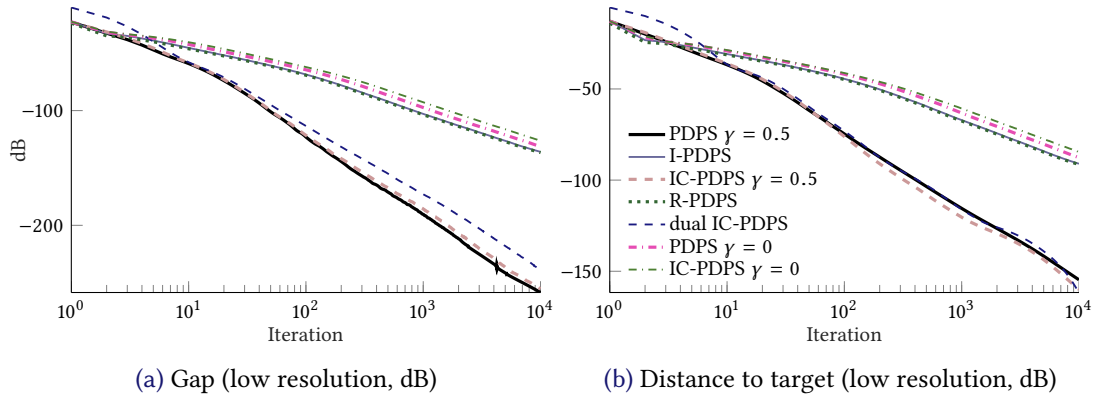

    \centering
    \subcaptionbox{Gap (low resolution, dB)\label{fig:tv-denoising-gap-lq-zinit}}{
        \setlength{\figureheight}{0.35\textwidth}\setlength{\figurewidth}{0.42\textwidth}\scriptsize\hskip-6pt\input{res/tv_denoising_results_lq_10000_gamma0.5_zinit,5_gaplog.tikz}%
    }%
    \subcaptionbox{Distance to target (low resolution, dB)\label{fig:tv-denoising-target-lq-zinit}}{
        \setlength{\figureheight}{0.35\textwidth}\setlength{\figurewidth}{0.42\textwidth}\scriptsize\hskip-6pt\input{res/tv_denoising_results_lq_10000_gamma0.5_zinit,5_target2log.tikz}%
    }
    \caption{Denoising convergence behaviour for low resolution image.}
    \label{fig:tv-denoising-lowres}
\end{figure}


\begin{figure}[tbp!]
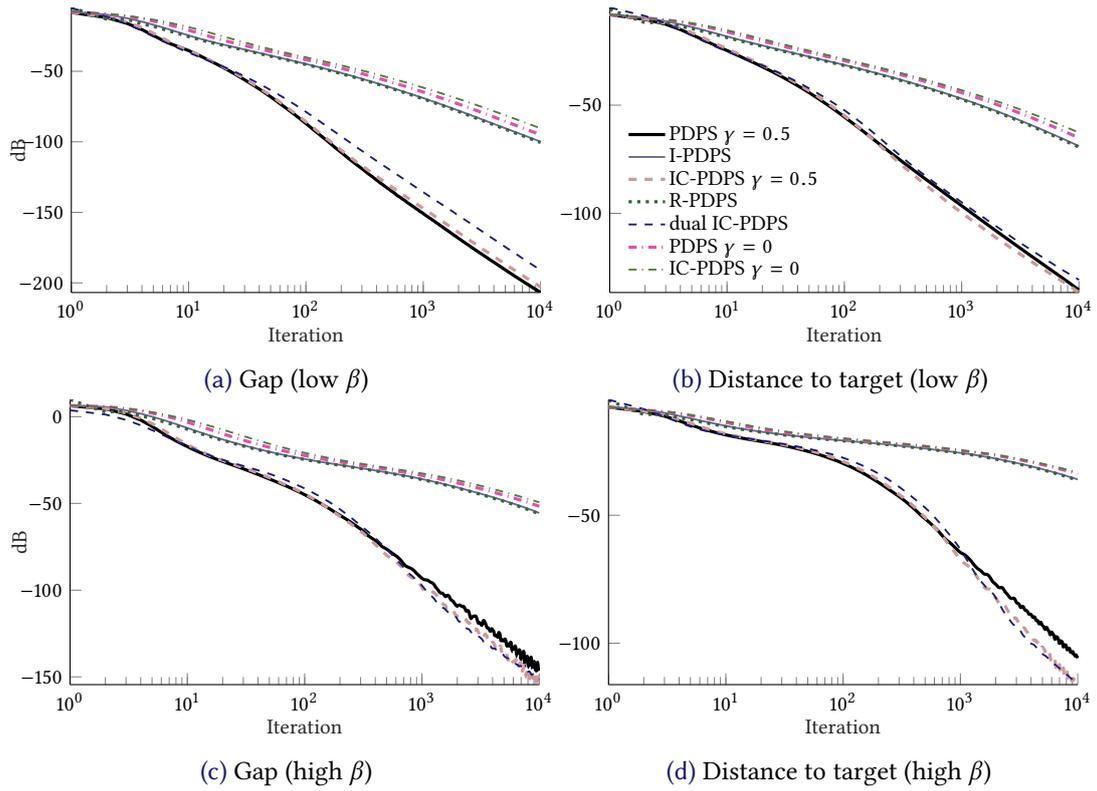

    \centering
    \subcaptionbox{Gap (low $\beta$)\label{fig:tv-denoising-gap-hq-zinit}}{
        \setlength{\figureheight}{0.35\textwidth}\setlength{\figurewidth}{0.42\textwidth}\scriptsize\hskip-6pt\input{res/tv_denoising_results_hq_10000_gamma0.5_zinit,5_gaplog.tikz}%
    }%
    \subcaptionbox{Distance to target (low $\beta$)\label{fig:tv-denoising-target-hq-zinit}}{
        \setlength{\figureheight}{0.35\textwidth}\setlength{\figurewidth}{0.42\textwidth}\scriptsize\hskip-6pt\input{res/tv_denoising_results_hq_10000_gamma0.5_zinit,5_target2log.tikz}%
    }%
    \\
    \subcaptionbox{Gap (high $\beta$)\label{fig:tv-denoising-gap-hq-zinit-highreg}}{
        \setlength{\figureheight}{0.35\textwidth}\setlength{\figurewidth}{0.42\textwidth}\scriptsize\hskip-6pt\input{res/tv_denoising_results_hq_10000_gamma0.5_zinit,5-highreg_gaplog.tikz}%
    }%
    \subcaptionbox{Distance to target (high $\beta$)\label{fig:tv-denoising-target-hq-zinit-highreg}}{
        \setlength{\figureheight}{0.35\textwidth}\setlength{\figurewidth}{0.42\textwidth}\scriptsize\hskip-6pt\input{res/tv_denoising_results_hq_10000_gamma0.5_zinit,5-highreg_target2log.tikz}%
    }%
    \caption{Denoising convergence behaviour.}
    \label{fig:tv-denoising}
\end{figure}

\begin{table}[tbp!]
    \caption{Denoising performance: CPU time and number of iterations (at a resolution of 10 after 100 iterations) to reach given duality gap and distance to target. The dashes indicate that the algorithm never reached (within the maximum number of iterations) the corresponding quality.}
    \label{tab:tv-denoising}

    \centering
    \small
    \setlength{\tabcolsep}{2pt}
    \captionsetup{position=top}

    \subcaptionbox{Low regularisation\label{tab:tv-denoising-lowreg}}{
        \begin{tabular}{l|rr|rr|rr|rr}
 & \multicolumn{2}{c}{gap $\le -40$dB} & \multicolumn{2}{c}{gap $\le -90$dB} & \multicolumn{2}{|c}{tgt $\le -40$dB} & \multicolumn{2}{|c}{tgt $\le -90$dB}\\
Method & iter & time & iter & time & iter & time & iter & time\\
\hline
PDPS $\gamma=0.5$ & 14 & 0.47s & 120 & 4.27s & 38 & 1.33s & 690 & 24.74s\\
I-PDPS & 58 & 2.39s & 4870 & 203.75s & 400 & 16.70s & -- & --\\
IC-PDPS $\gamma=0.5$ & 14 & 0.75s & 120 & 6.87s & 40 & 2.25s & 590 & 33.99s\\
R-PDPS & 55 & 2.36s & 4630 & 202.46s & 380 & 16.58s & -- & --\\
dual IC-PDPS & 13 & 0.64s & 160 & 8.53s & 43 & 2.25s & 750 & 40.18s\\
PDPS $\gamma=0$ & 82 & 2.87s & 6950 & 245.99s & 560 & 19.79s & -- & --\\
IC-PDPS $\gamma=0$ & 99 & 5.00s & 9710 & 494.99s & 650 & 33.09s & -- & --\\
\end{tabular}

    }

    \subcaptionbox{High regularisation\label{tab:tv-denoising-highreg}}{
        \begin{tabular}{l|rr|rr|rr|rr}
 & \multicolumn{2}{c}{gap $\le -40$dB} & \multicolumn{2}{c}{gap $\le -90$dB} & \multicolumn{2}{|c}{tgt $\le -40$dB} & \multicolumn{2}{|c}{tgt $\le -90$dB}\\
Method & iter & time & iter & time & iter & time & iter & time\\
\hline
PDPS $\gamma=0.5$ & 70 & 2.35s & 890 & 30.34s & 250 & 8.50s & 4330 & 147.73s\\
I-PDPS & 1810 & 70.74s & -- & -- & -- & -- & -- & --\\
IC-PDPS $\gamma=0.5$ & 73 & 3.85s & 740 & 39.51s & 270 & 14.38s & 2740 & 146.43s\\
R-PDPS & 1720 & 68.07s & -- & -- & -- & -- & -- & --\\
dual IC-PDPS & 91 & 4.78s & 770 & 40.85s & 330 & 17.48s & 2560 & 135.94s\\
PDPS $\gamma=0$ & 2580 & 84.57s & -- & -- & -- & -- & -- & --\\
IC-PDPS $\gamma=0$ & 3240 & 157.19s & -- & -- & -- & -- & -- & --\\
\end{tabular}

    }
\end{table}

\begin{figure}[tbp!]
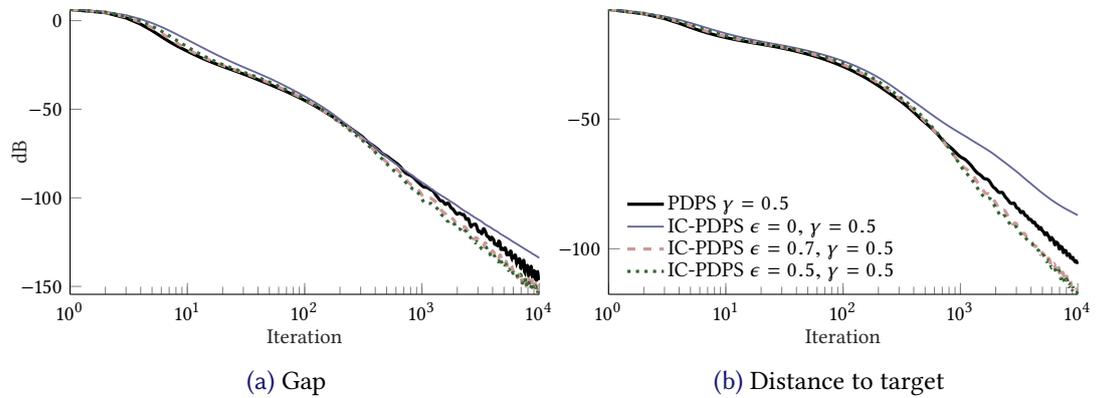

    \centering
    \subcaptionbox{Gap\label{fig:tv-denoising-gap-hq-zinit-highreg-epsilon-comparison}}{
        \setlength{\figureheight}{0.35\textwidth}\setlength{\figurewidth}{0.42\textwidth}\scriptsize\hskip-6pt\input{res/tv_denoising_results_hq_10000_gamma0.5_zinit,5-highreg_gaplog_epsilon_comparison.tikz}%
    }%
    \subcaptionbox{Distance to target\label{fig:tv-denoising-target-hq-zinit-highreg-epsilon-comparison}}{
        \setlength{\figureheight}{0.35\textwidth}\setlength{\figurewidth}{0.42\textwidth}\scriptsize\hskip-6pt\input{res/tv_denoising_results_hq_10000_gamma0.5_zinit,5-highreg_target2log_epsilon_comparison.tikz}%
    }%
    \caption{Effect of $\epsilon$ on denoising convergence behaviour.}
    \label{fig:tv-denoising-highreg-epsilon-comparison}
\end{figure}


\begin{figure}[tbp!]
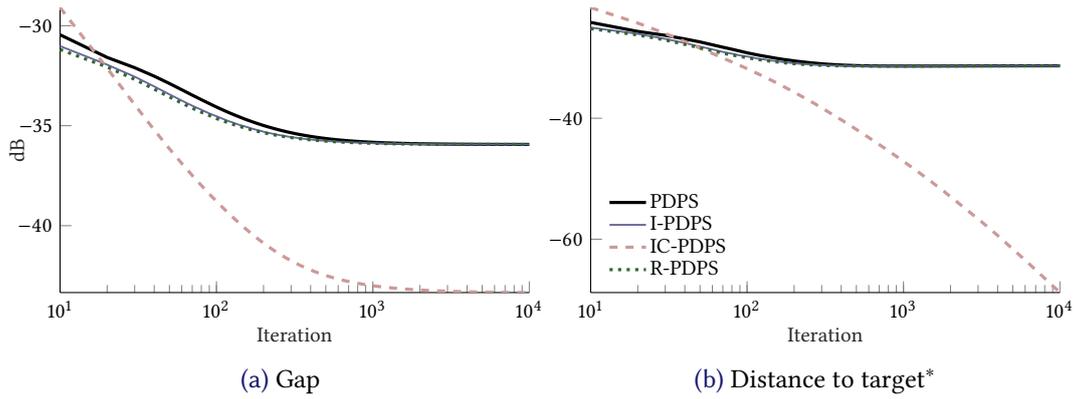

    \centering
    \subcaptionbox{Gap\label{fig:tv-sparsefft-gap-hq-zinit}}{
        \setlength{\figureheight}{0.35\textwidth}\setlength{\figurewidth}{0.42\textwidth}\scriptsize\hskip-6pt\input{res/tv_sparsefft_results_hq_10000_gamma0.5_zinit,5-ic_pdps_target_gaplog.tikz}%
    }%
    \subcaptionbox{Distance to target$^*$\label{fig:tv-sparsefft-target-hq-zinit}}{
        \setlength{\figureheight}{0.35\textwidth}\setlength{\figurewidth}{0.42\textwidth}\scriptsize\hskip-6pt\input{res/tv_sparsefft_results_hq_10000_gamma0.5_zinit,5-ic_pdps_target_target2log.tikz}%
    }%
    \caption{Sparse Fourier inversion convergence behaviour. $^*$Note: target computed by taking one million iterations of IC-PDPS instead of PDPS; see \cref{sec:numsetup}.}
    \label{fig:tv-sparsefft}
\end{figure}

\begin{table}[tbp!]
    \caption{
        Sparse Fourier inversion and PET performance: CPU time and number of iterations (at a resolution of 10) to reach given duality gap and distance to target. The dashes indicate that the algorithm never reached (within the maximum number of iterations) the corresponding quality.}
    \centering%
    \small%
    \captionsetup{position=top}%
    \setlength{\tabcolsep}{2pt}%
    \subcaptionbox{Sparse Fourier inversion\label{tab:tv-sparsefft}}{
        \begin{tabular}{l|rr|rr}
 & \multicolumn{2}{c}{gap $\le -35$dB} & \multicolumn{2}{|c}{tgt $\le -35$dB}\\
Method & iter & time & iter & time\\
\hline
PDPS & 210 & 15.96s & -- & --\\
I-PDPS & 150 & 11.87s & -- & --\\
IC-PDPS & 40 & 3.90s & 180 & 17.88s\\
R-PDPS & 140 & 12.12s & -- & --\\
\end{tabular}

    }%
    \subcaptionbox{PET\label{tab:tv-kl}}{
        \begin{tabular}{l|rr|rr}
 & \multicolumn{2}{c}{gap $\le -40$dB} & \multicolumn{2}{|c}{tgt $\le -30$dB}\\
Method & iter & time & iter & time\\
\hline
PDPS & 3740 & 43.31s & 6190 & 71.68s\\
I-PDPS & 3220 & 48.72s & 4380 & 66.28s\\
IC-PDPS & 2180 & 30.94s & 6010 & 85.32s\\
R-PDPS & -- & -- & 4150 & 57.32s\\
\end{tabular}

    }%
\end{table}

\begin{figure}[tbp!]
    \def\setlength{\figureheight}{0.35\textwidth}\setlength{\figurewidth}{0.42\textwidth}\scriptsize\hskip-6pt\input{#}1{\setlength{\figureheight}{0.35\textwidth}\setlength{\figurewidth}{0.42\textwidth}\scriptsize\hskip-6pt\input{#1}}
    \centering
    \subcaptionbox{Denoising\label{fig:tv-denoising-hq-multi}}{
        \setlength{\figureheight}{0.35\textwidth}\setlength{\figurewidth}{0.42\textwidth}\scriptsize\hskip-6pt\input{res/tv_denoising_results_hq_10000_gamma0.5_zinit,5-multi-highreg_gaplog.tikz}%
    }%
    \subcaptionbox{Sparse Fourier inversion\label{fig:tv-sparsefft-hq-multi}}{
        \setlength{\figureheight}{0.35\textwidth}\setlength{\figurewidth}{0.42\textwidth}\scriptsize\hskip-6pt\input{res/tv_sparsefft_results_hq_10000_gamma0.5_zinit,5-multi_gaplog.tikz}%
    }%
    \caption{Gap convergence behaviour over multiple images (24). The filled areas indicate on each iteration the minimum and maximum gap (dB) over all the images.}
    \label{fig:multi-results}
\end{figure}


\begin{figure}[tbp!]
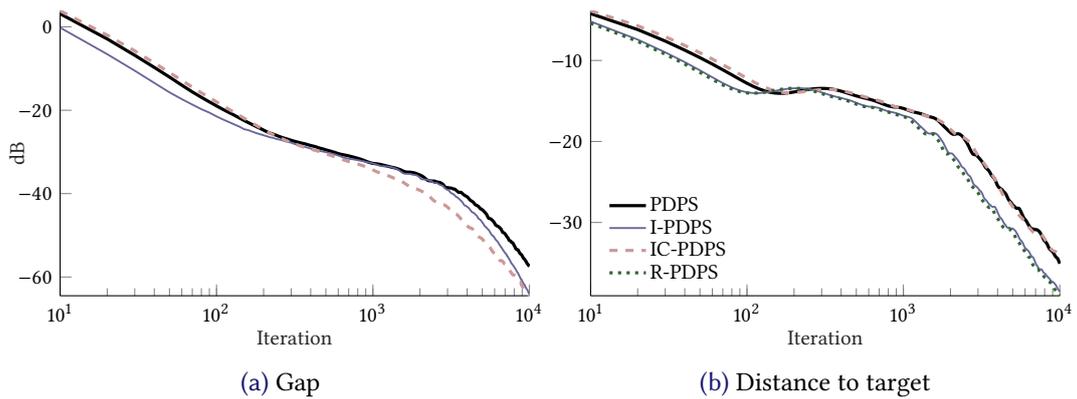

    \centering
    \subcaptionbox{Gap\label{fig:tv-kl-gap-hq-zinit}}{
        \setlength{\figureheight}{0.35\textwidth}\setlength{\figurewidth}{0.42\textwidth}\scriptsize\hskip-6pt\input{res/tv_kl_results_hq_10000_gamma0.5_zinit-revision_gaplog.tikz}%
    }%
    \subcaptionbox{Distance to target\label{fig:tv-kl-target-hq-zinit}}{
        \setlength{\figureheight}{0.35\textwidth}\setlength{\figurewidth}{0.42\textwidth}\scriptsize\hskip-6pt\input{res/tv_kl_results_hq_10000_gamma0.5_zinit-revision_target2log.tikz}%
    }%
    \caption{Convergence behaviour for the PET example problem.}
    \label{fig:tv-pet}
\end{figure}

\section*{Acknowledgements}

This research has been supported by the EPSRC First Grant EP/P021298/1, ``PARTIAL Analysis of Relations in Tasks of Inversion for Algorithmic Leverage'' as well as Academy of Finland grants 314701 and 320022.

\section*{\texorpdfstring{\normalsize}{}A data statement for the EPSRC}

Our algorithm implementations and the publicly available test images have been archived on Zenodo at \cite{tuomov-inertia-code}.

\def\bibnamefont#1{\textsc{\sffamily #1}}
\bibliographystyle{texbase/poor_approx_of_shinybib}


\begin{thebibliography}{10}
\providecommand{\bibnamefont}[1]{#1}
\providecommand{\url}[1]{\texttt{#1}}
\providecommand{\urlprefix}{URL }
\expandafter\ifx\csname urlstyle\endcsname\relax
  \providecommand{\doi}[1]{doi:\discretionary{}{}{}#1}\else
  \providecommand{\doi}{doi:\discretionary{}{}{}\begingroup
  \urlstyle{rm}\Url}\fi
\providecommand{\eprint}[2][]{\url{#2}}

\bibitem{alvarez2001inertial}
\bibnamefont{Alvarez F} \& \bibnamefont{Attouch H}, An inertial proximal method
  for maximal monotone operators via discretization of a nonlinear oscillator
  with damping, \emph{Set-valued and Variational Analysis} 9 (2001) 3--11,
  \doi{10.1023/A:1011253113155}.

\bibitem{attouch2018convergence}
\bibnamefont{Attouch H} \& \bibnamefont{Cabot A}, Convergence rates of inertial
  forward-backward algorithms, \emph{SIAM Journal on Optimization} 28 (2018)
  849--874, \doi{10.1137/17M1114739}.

\bibitem{beck2009fista}
\bibnamefont{Beck A} \& \bibnamefont{Teboulle M}, A fast iterative
  shrinkage-thresholding algorithm for linear inverse problems, \emph{SIAM
  Journal on Imaging Sciences} 2 (2009) 183--202, \doi{10.1137/080716542}.

\bibitem{benning2015preconditioned}
\bibnamefont{Benning M}, \bibnamefont{Knoll F}, \bibnamefont{Sch{\"o}nlieb C~B}
  \& \bibnamefont{Valkonen T}, Preconditioned {ADMM} with nonlinear operator
  constraint, in: \emph{System Modeling and Optimization: 27th IFIP TC 7
  Conference, CSMO 2015, Sophia Antipolis, France, June 29--July 3, 2015,
  Revised Selected Papers}, Edited by L~Bociu, J~A D{\'e}sid{\'e}ri, \&
  A~Habbal, Springer International Publishing2016, 117--126,
  \doi{10.1007/978-3-319-55795-3_10},
  \urlprefix\url{https://tuomov.iki.fi/m/nonlinearADMM.pdf},
  \eprint{1511.00425}.

\bibitem{bredies2014preconditioned}
\bibnamefont{Bredies K} \& \bibnamefont{Sun H~P}, Preconditioned
  {D}ouglas--{R}achford algorithms for {TV}- and {TGV}-regularized variational
  imaging problems, \emph{Journal of Mathematical Imaging and Vision} 52 (2015)
  317--344, \doi{10.1007/s10851-015-0564-1}.

\bibitem{chambolle2004algorithm}
\bibnamefont{Chambolle A}, An algorithm for total variation minimization and
  applications, \emph{Journal of Mathematical Imaging and Vision} 20 (2004)
  89--97, \doi{10.1023/B:JMIV.0000011325.36760.1e}.

\bibitem{chambolle2014convergence}
\bibnamefont{Chambolle A} \& \bibnamefont{Dossal C}, On the convergence of the
  iterates of ``{FISTA}''., \emph{Journal of Optimization Theory and
  Applications} 16 (2015) 25,
  \urlprefix\url{https://hal.inria.fr/hal-01060130v3}.

\bibitem{chambolle2010first}
\bibnamefont{Chambolle A} \& \bibnamefont{Pock T}, A first-order primal-dual
  algorithm for convex problems with applications to imaging, \emph{Journal of
  Mathematical Imaging and Vision} 40 (2011) 120--145,
  \doi{10.1007/s10851-010-0251-1}.

\bibitem{chambolle2014ergodic}
\bibnamefont{Chambolle A} \& \bibnamefont{Pock T}, On the ergodic convergence
  rates of a first-order primal--dual algorithm, \emph{Mathematical
  Programming}  (2015) 1--35, \doi{10.1007/s10107-015-0957-3}.

\bibitem{chan2014inertial}
\bibnamefont{Chan R~H}, \bibnamefont{Ma S} \& \bibnamefont{Yang J}, Inertial
  primal-dual algorithms for structured convex optimization (2014), arXiv
  preprint arXiv:1409.2992, \eprint{1409.2992}.

\bibitem{condat2013primaldual}
\bibnamefont{Condat L}, A primal--dual splitting method for convex optimization
  involving lipschitzian, proximable and linear composite terms, \emph{Journal
  of Optimization Theory and Applications} 158 (2013) 460--479,
  \doi{10.1007/s10957-012-0245-9}.

\bibitem{tuomov-tgvlearn}
\bibnamefont{de~Los~Reyes J~C}, \bibnamefont{Sch{\"o}nlieb C~B} \&
  \bibnamefont{Valkonen T}, Bilevel parameter learning for higher-order total
  variation regularisation models, \emph{Journal of Mathematical Imaging and
  Vision} 57 (2017) 1--25, \doi{10.1007/s10851-016-0662-8},
  \urlprefix\url{https://tuomov.iki.fi/m/tgv_learn.pdf}, \eprint{1508.07243}.

\bibitem{eckstein1992douglas}
\bibnamefont{Eckstein J} \& \bibnamefont{Bertsekas D}, On the
  {D}ouglas--{R}achford splitting method and the proximal point algorithm for
  maximal monotone operators, \emph{Mathematical Programming} 55 (1992)
  293--318, \doi{10.1007/BF01581204}.

\bibitem{gabay}
\bibnamefont{Gabay D}, Applications of the method of multipliers to variational
  inequalities, in: \emph{Augmented Lagrangian Methods: Applications to the
  Numerical Solution of Boundary-Value Problems}, \emph{Studies in Mathematics
  and its Applications}, volume~15, Edited by M~Fortin, \& R~Glowinski,
  North-Holland1983, 299--331.

\bibitem{he2012convergence}
\bibnamefont{He B} \& \bibnamefont{Yuan X}, Convergence analysis of primal-dual
  algorithms for a saddle-point problem: From contraction perspective,
  \emph{SIAM Journal on Imaging Sciences} 5 (2012) 119--149,
  \doi{10.1137/100814494}.

\bibitem{kim2018fista}
\bibnamefont{Kim D} \& \bibnamefont{Fessler J~A}, Another look at the fast
  iterative shrinkage/thresholding algorithm ({FISTA}), \emph{SIAM Journal on
  Optimization} 28 (2018) 223--250, \doi{10.1137/16M108940X}.

\bibitem{knoll2010second}
\bibnamefont{Knoll F}, \bibnamefont{Bredies K}, \bibnamefont{Pock T} \&
  \bibnamefont{Stollberger R}, Second order total generalized variation {(TGV)}
  for {MRI}, \emph{Magnetic Resonance in Medicine} 65 (2011) 480--491,
  \doi{10.1002/mrm.22595}.

\bibitem{lorenz2014accelerated}
\bibnamefont{Lorenz D} \& \bibnamefont{Pock T}, An inertial forward-backward
  algorithm for monotone inclusions, \emph{Journal of Mathematical Imaging and
  Vision} 51 (2015) 311--325, \doi{10.1007/s10851-014-0523-2}.

\bibitem{mueller2012linear}
\bibnamefont{Mueller J~L} \& \bibnamefont{Siltanen S}, \emph{Linear and
  Nonlinear Inverse Problems with Practical Applications}, SIAM, Philadelphia,
  PA2012, \doi{10.1137/1.9781611972344}.

\bibitem{nesterov1983method}
\bibnamefont{Nesterov Y}, A method of solving a convex programming problem with
  convergence rate {$O(1/k^2)$}, \emph{Soviet Mathematics Doklady} 27 (1983)
  372--376.

\bibitem{patrinos2014douglas}
\bibnamefont{Patrinos P}, \bibnamefont{Stella L} \& \bibnamefont{Bemporad A},
  {D}ouglas--{R}achford splitting: Complexity estimates and accelerated
  variants, in: \emph{53rd IEEE Conference on Decision and Control}, 2014,
  4234--4239, \doi{10.1109/CDC.2014.7040049}.

\bibitem{tuomov-proxtest}
\bibnamefont{Valkonen T}, Testing and non-linear preconditioning of the
  proximal point method, \emph{Applied Mathematics and Optimization}  (2018),
  \doi{10.1007/s00245-018-9541-6},
  \urlprefix\url{https://tuomov.iki.fi/m/proxtest.pdf}, \eprint{1703.05705}.

\bibitem{tuomov-blockcp}
\bibnamefont{Valkonen T}, Block-proximal methods with spatially adapted
  acceleration, \emph{Electronic Transactions on Numerical Analysis} 51 (2019)
  15--49, \doi{10.1553/etna_vol51s15},
  \urlprefix\url{https://tuomov.iki.fi/m/blockcp.pdf}, \eprint{1609.07373}.

\bibitem{tuomov-inertia-code}
\bibnamefont{Valkonen T}, Source codes for ``inertial, corrected, primal-dual
  proximal splitting'', Online resource on Zenodo (2019),
  \doi{10.5281/zenodo.3531934}.

\bibitem{tuomov-cpaccel}
\bibnamefont{Valkonen T} \& \bibnamefont{Pock T}, Acceleration of the {PDHGM}
  on partially strongly convex functions, \emph{Journal of Mathematical Imaging
  and Vision} 59 (2017) 394--414, \doi{10.1007/s10851-016-0692-2},
  \urlprefix\url{https://tuomov.iki.fi/m/cpaccel.pdf}, \eprint{1511.06566}.

\bibitem{vu2013splitting}
\bibnamefont{V{\~{u}} B~C}, A splitting algorithm for dual monotone inclusions
  involving cocoercive operators, \emph{Advances in Computational Mathematics}
  38 (2013) 667--681, \doi{10.1007/s10444-011-9254-8}.

\bibitem{zhang2011unified}
\bibnamefont{Zhang X}, \bibnamefont{Burger M} \& \bibnamefont{Osher S}, A
  unified primal-dual algorithm framework based on {B}regman iteration,
  \emph{Journal of Scientific Computing} 46 (2011) 20--46,
  \doi{10.1007/s10915-010-9408-8}.

\end{thebibliography}
 \providecommand{\eprint}[1]{\href{http://arxiv.org/abs/#1}{arXiv:#1}}
  \providecommand{\eprint}[1]{\href{http://arxiv.org/abs/#1}{arXiv:#1}}
  \providecommand{\noopsort}[1]{}



\end{document}